\documentclass{article}
\usepackage[left=3cm,right=3cm,top=2.5cm,bottom=2.5cm]{geometry} 

\usepackage[utf8]{inputenc}

\usepackage{amsmath}
\usepackage{amssymb}
\usepackage{amsthm}
\usepackage{graphicx}
\usepackage{mathrsfs}
\usepackage{tikz}
\usepackage{url}

\newtheorem{thm}{Theorem}[section]
\newtheorem{prop}[thm]{Proposition}
\newtheorem{rmk}[thm]{Remark}

\newtheorem*{ques}{Question}

\newtheorem{cor}[thm]{Corollary}
\newtheorem{lemma}[thm]{Lemma}
\newtheorem{exam}[thm]{Example}

\newcommand{\ZZ}{\mathbb{Z}}

\newcommand{\RR}{\mathbb{R}}
\newcommand{\CC}{\mathbb{C}}

\newcommand{\PP}{\mathbb{P}}

\begin{document}

\title{Type \(C\) Richardson Boundaries and Newton--Okounkov Degenerations of Odd-Dimensional Projective Space}
\author{Zhaoyang Liu\thanks{Department of Mathematics, King's College London, London WC2R 2LS, United Kingdom.}}
\date{}
\maketitle

\begin{abstract}
We compare two mirror-theoretic degeneration pictures attached to odd-dimensional projective space. If \(\PP^{2n-1}\) is viewed as a toric variety, one obtains the classical toric mirror. If it is viewed as the type \(C_n\) homogeneous space \(Sp_{2n}/P_1\), Rietsch's Lie-theoretical construction gives a superpotential on the dual side. We compute the type \(C_n\) boundary \(D_C\), while on the dual-side Lusztig torus we compute the corresponding Laurent polynomial. We construct Newton--Okounkov bodies, and in each case the degree-augmented value semigroup is identified with the lattice-point semigroup of the cone over the polar dual of the corresponding Newton polytope. This gives two toric degeneration pictures attached to the same projective space. We also exhibit a rank-one weight degeneration connecting \(D_C\) with the standard toric boundary while keeping the ambient projective space fixed.
\end{abstract}

\smallskip
\noindent\textbf{Keywords.} Projective space; Rietsch mirror; Newton--Okounkov bodies; toric degenerations; Landau--Ginzburg models.

\section{Introduction}

The mirror of a Fano variety is usually written as a Landau--Ginzburg model \((Y,W)\), where \(Y\) is a non-compact algebraic variety and \(W:Y\to\CC\) is the superpotential. In the simplest cases \(Y\) is an algebraic torus and \(W\) is a Laurent polynomial. The exponents of the monomials of \(W\) form a Newton polytope, and this polytope is one way in which the mirror remembers the degeneration behaviour of the original Fano variety.

For toric Fano varieties this picture is especially concrete. The mirror is the torus whose character lattice is the lattice of one-parameter subgroups of the original torus, and the superpotential is a sum of monomials corresponding to the toric boundary divisors. This point of view goes back at least to Givental's work on mirror symmetry and Gromov--Witten theory for toric varieties and complete intersections \cite{Givental,Givental98}; it also appears in the physics construction of Hori and Vafa \cite{HV}. For projective space the resulting Laurent polynomial is the familiar toric superpotential.

There is another source of Landau--Ginzburg mirrors, coming from representation theory rather than toric geometry. For a homogeneous space \(G/P\), Rietsch constructs a Lie-theoretical Landau--Ginzburg model on an open Richardson stratum in the Langlands dual flag variety \cite{Rietsch}. An open Richardson stratum is an explicit open subvariety obtained as an intersection of opposite Bruhat cells; the notation is recalled in Section~2. To compute the superpotential one usually chooses a torus chart on this open stratum. In this paper these charts are supplied by Lusztig's parametrizations from reduced words. They give dense algebraic tori, called Lusztig tori, on which Rietsch's superpotential becomes an ordinary Laurent polynomial. The precise construction is given in Section~3.2.

The starting point of this paper is that the same projective space has two different mirror-theoretic descriptions. Throughout, we assume \(n\ge2\) and set \(N=2n-1\). First, \(\PP^N\) is a toric variety. It carries the standard toric boundary
\[
D_{\mathrm{tor}}=\left\{\prod_{i=0}^Np_i=0\right\},
\]
and the toric superpotential \(W_{\mathrm{tor}}^{(N)}\). Second, since every line in a symplectic vector space is isotropic, the same projective space can be written as the type \(C_n\) homogeneous space \(Sp_{2n}/P_1\). This second presentation leads to a different natural boundary \(D_C\) on the original projective space, namely the complement of the type \(C_n\) open Richardson locus used in Section~3. On the dual side, the Rietsch mirror is compactified by the odd quadric \(SO_{2n+1}/P_1^\vee\simeq Q^{2n-1}\), and the corresponding superpotential becomes explicit after restricting to a Lusztig torus.

The aim is to compare these two descriptions through anticanonical divisors, Laurent polynomials, and Newton--Okounkov degenerations. Here an anticanonical boundary means a divisor \(D\) such that \(K_{\PP^N}+D\sim0\), so in projective space it is cut out by a homogeneous polynomial of degree \(N+1\). The toric degenerations considered here are produced from flag valuations. Such a valuation records successive orders of vanishing along a nested sequence of subvarieties. Anderson's construction turns a finitely generated value semigroup into a flat degeneration to a toric variety \cite{Anderson}. We use this together with the general theory of Newton--Okounkov bodies \cite{KK,LM}.

The first main result gives the two explicit calculations needed later: the type \(C_n\) boundary and the dual-side Lusztig-torus superpotential. The boundary consists of the two coordinate hyperplanes \(p_0=0\) and \(p_N=0\), together with \(n-1\) explicit quadratic factors. The superpotential is written in the coordinates on the Lusztig torus, denoted \(a_1,\ldots,a_{n-1},b,c_{n-1},\ldots,c_1\).

\begin{thm}\label{thm:intro-boundary-potential}
Let \(N=2n-1\) and write homogeneous coordinates on \(\PP^N\) as \([p_0:\cdots:p_N]\).
For \(1\le r\le n-1\), define
\[
F_r(p_0,\dots,p_N):=\sum_{j=0}^r(-1)^j p_{r-j}\,p_{N-r+j}.
\]
The type \(C_n\) boundary is the reduced anticanonical divisor
\[
D_C
:=\operatorname{div}\bigl(p_0\,p_N\,F_1\,F_2\,\cdots\,F_{n-1}\bigr)_{\mathrm{red}} .
\]
On the dual-side Lusztig torus associated with the reduced word \((1,2,\dots,n-1,n,n-1,\dots,1)\), the Lie-theoretical superpotential is
\[
W_{\mathrm{Lus}}^{(2n-1)}(a_1,\dots,a_{n-1},b,c_{n-1},\dots,c_1)
:=
a_1+\cdots+a_{n-1}+b+c_{n-1}+\cdots+c_1
+
\frac{q\,(a_1+c_1)}{a_1\cdots a_{n-1}\,b^2\,c_{n-1}\cdots c_1}.
\]
\end{thm}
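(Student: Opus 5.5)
The proof splits into two independent computations: the boundary on the $A$-side and the superpotential on the dual side.

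\emph{The boundary $D_C$.} Realize $Sp_{2n}$ as preserving the form $\omega(e_i,e_{N-i}) = \pm$ given by the antidiagonal matrix $J$ with alternating signs. Take $B$ upper triangular, $B^-$ lower triangular, and $P_1$ the stabilizer of the line $\langle e_0\rangle$. The open Richardson locus in $Sp_{2n}/P_1=\PP^N$ is the intersection of the open $B$-orbit and the open $B^-$-orbit. The complement of the open $B^-$-orbit is the $B^-$-stable hyperplane $\{p_0=0\}$, and the complement of the open $B$-orbit is $\{p_N=0\}$. This alone is not anticanonical, since $\deg=N+1=2n$. So the type $C$ Richardson stratum must be the one relevant for Rietsch's construction, namely the intersection with the opposite cell indexed by the longest element of $W^P$, inside the full flag-level projection. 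More concretely, the boundary is the pullback of the type $C$ Richardson divisors: the vanishing loci of the generalized minors $\Delta_{\omega_r}$ and $\Delta_{w_0\omega_r}$-type sections restricted along the standard section $\PP^N\to Sp_{2n}/B$.

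I would compute these as follows. For a point $p$, take the isotropic flag $\langle p\rangle\subset \langle p, Jp\text{-related vectors}\rangle\ldots$. In practice, I would parametrize the open cell by the unipotent $u\in U^-$ with $u e_0 = p/p_0$. The pullback of the fundamental minor for $\omega_r$ on $u\dot w_P$ then gives $p_0$, $p_N$, and the functions
\[
\omega(p, \text{coordinate shifts}) = \sum_{j=0}^r (-1)^j p_{r-j}p_{N-r+j}=F_r.
\]
Each $F_r$ should appear as $\omega(\pi_{\le r} p,\pi_{\ge N-r}p)$, the symplectic pairing of truncations, which is manifestly $B$-semi-invariant up to lower-order terms. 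I would then verify this identity by direct evaluation of the $2\times 2$-type minors. The degree count is $1+1+2(n-1)=2n=N+1$, so the divisor is anticanonical. Reducedness follows once each $F_r$ is irreducible: it is a quadric of rank $\ge 4$ for $r\ge1$, except for $r=1$, where $F_1=p_1p_{N-1}-p_0p_N$ has rank 4. The factors are also pairwise distinct.

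\emph{The superpotential.} Use Rietsch's formula $W=\sum_i e_i^*(u_1)+\sum_i f_i^*(u_2)$ plus the quantum term, on $Z=B^-_\vee\dot w_0 \cap U^\vee_+ T^\vee \dot w_P U^\vee_-$. Parametrize by Lusztig coordinates $x_{i_1}(t_1)\cdots x_{i_\ell}(t_\ell)$ along the reduced word $1,2,\ldots,n-1,n,n-1,\ldots,1$ for $w_P$ in type $B_n$. The linear part is immediate, because each $e_i^*$ of a product of one-parameter subgroups is the sum of the parameters with label $i$. This gives $a_1+\cdots+c_1$. The torus factor $t$ is fixed by the condition defining $Z$ as a function of $q$ and the parameters, and it equals $q\,\alpha^\vee(\prod\ldots)^{-1}$. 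The main obstacle is the remaining term $\sum f_i^*$, which requires factoring $u_2$ via the Chamber Ansatz/Berenstein--Zelevinsky minor formulas. For that I would compute the single nonzero contribution $f_1^*$ via the ratio of minors $\Delta_{s_1\omega_1,\omega_1}/\Delta_{\omega_1,\omega_1}$, evaluated in the $(2n+1)$-dimensional vector representation of $SO_{2n+1}$. The short root $\alpha_n$ contributes the doubled exponent $b^2$, since $\langle\alpha_n^\vee\rangle$ acts with weight $2$ on the zero weight. The sum over the two paths through $e_1$ yields the numerator $a_1+c_1$. Finally I would check the result against the known quantum cohomology relation $h^{2n}=q$ of $\PP^{2n-1}$ at critical points as a consistency test.
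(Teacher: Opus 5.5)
Your proposal is an outline: in both halves, the step that carries the mathematical content is announced rather than proved.

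\textbf{The boundary $D_C$.} The set to compute is the image $\pi(\mathcal R_{w_P,w_0})\subset\PP^N$. Membership in a projected Richardson variety is an existence statement over the fibre of $G/B\to G/P$, so minors evaluated along an arbitrary local section say nothing about it. Your lift $u\dot w_P$ is the right one, but you must say why. It lies in the Schubert cell indexed by $w_P$, and that cell maps isomorphically onto the open orbit in $G/P$. So $u\dot w_P$ is the \emph{only} candidate, and $uP$ lies in the image iff $u\dot w_P$ lies in the opposite big cell. The paper builds this into its parametrization $b=uMv\in B_-\cap U_+MU_+$, for which $b\dot w_0B_-=u\dot w_PB_-$. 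Note also that the relevant stratum is indexed by $w_P$, the longest element of $W_P$, not by the longest element of $W^P$.

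After that, the identity ``minor $=\pm F_r$'' is the entire computation, and you do not do it. The minors testing the big cell are $k\times k$ for $k=1,\dots,n$, not $2\times2$. They collapse to quadrics only because $u^tJu=J$ forces the last row of $u$ to be its first column reversed with alternating signs; this is the analogue of Lemma~\ref{lem:symplectic-inverse-last-column}. Take $u$ in the unipotent radical of the opposite parabolic with first column $p/p_0$. Then the bottom-left $k\times k$ minor of $u\dot w_P$ is a bordered monomial determinant equal to $\pm\sum_{i=0}^{k-1}(-1)^ip_ip_{N-i}/p_0^2$: this is $\pm p_N/p_0$ for $k=1$ and $\pm F_{k-1}/p_0^2$ for $k\ge2$. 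Carried out, this is a more direct route than the paper's, which uses southeast minors of $Mv$ for arbitrary $v$, Jacobi's identity, the recurrence $\Delta_k+\Delta_{k+1}=q_kq_{N-k}$, and a separate converse. As written, however, it is not a proof.

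\textbf{The superpotential.} The linear part is fine, but the monomial $q(a_1+c_1)/(a_1\cdots a_{n-1}b^2c_{n-1}\cdots c_1)$ is exactly what has to be proved. You neither compute it nor explain why the other $n-1$ simple-root coordinates of the non-Lusztig factor vanish. ``Two paths through $e_1$'' and ``weight $2$ on the zero weight'' are heuristics, and agreement with $h^{2n}=q$ is only a consistency check.

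The missing idea, in the paper's convention ($u_2$ on the Lusztig torus, $u_1$ solved for), is Lemma~\ref{lem:u2-contribution}. For $2\le i\le 2n$, the part of row $i$ of $u_2$ right of the diagonal is $\lambda_i$ times the corresponding part of row $1$. The last row of $t(q)M^\vee u_2$ is $q^{-1}$ times row $1$ of $u_2$, so rows $2,\dots,2n$ are cleared by the last row alone. Hence the solution of $u_1t(q)M^\vee u_2\in B_-^\vee$ is a hook matrix, supported on the first row and last column, with $(u_1)_{i,2n+1}=q\lambda_i$. One then checks that it preserves $J^\vee$ (a telescoping sum) and clears the first row (the recursion $R_{j+1}=R_j-r_jh_j$). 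The hook shape gives $\varepsilon_i(u_1)=0$ for $i\ge2$ at once, and $\varepsilon_1(u_1)=(u_1)_{2n,2n+1}=q\lambda_{2n}$ is the stated monomial.

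Smaller errors:
\begin{itemize}
\item $(1,\dots,n,\dots,1)$ is a reduced word for $w_Pw_0$, not $w_P$, which has length $(n-1)^2$.
\item The torus factor is the quantum parameter $t(q)\in(T^\vee)^{W_P}$, fixed in advance, not solved for from the Lusztig coordinates.
\item Rietsch's formula has no separately added quantum term; the $q$-dependent monomials are exactly the simple-root coordinates of the second unipotent factor.
\end{itemize}
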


When \(n=2\), this Lie-theoretical mirror calculation for \(\PP^3\) follows the unpublished manuscript of Tillmann-Morris \cite{TillmannMorris}. Section~6 of that manuscript also states the odd-dimensional formula for the Lusztig-torus Laurent polynomial that appears in Theorem~\ref{thm:intro-boundary-potential}. Section~4.2 uses the \(\PP^3\) case as a toy model before passing to general odd-dimensional projective space.

The second main result constructs a Newton--Okounkov flag on \(\PP^N\) adapted to \(D_C\), with the first steps of the flag following the quadratic factors appearing in the boundary. A section of \(\mathcal O((N+1)k)\) is normalized by the \(k\)-th power of the anticanonical section \(H_N\); on the valuation side this normalization appears as subtracting \(k\mathbf 1\). The resulting lattice points are then compared with the exponent vectors of the Lusztig-torus Laurent polynomial. The raw flag order is kept fixed; a lattice permutation is used only for this comparison.

\begin{thm}\label{thm:intro-no-body}
Let \(N=2n-1\). There exists an admissible flag \(Y_\bullet\) on \(\PP^N\), with associated flag valuation \(\nu_{\mathrm{flag}}\), such that for every \(k\ge0\),
\[
\sigma\left(
\left\{
\nu_{\mathrm{flag}}(s)-k\mathbf 1:
0\ne s\in H^0\bigl(\PP^N,\mathcal O((N+1)k)\bigr)
\right\}
\right)
=
kP_n^\vee\cap\ZZ^{2n-1}.
\]
Here \(H_N=p_0p_N\prod_{r=1}^{n-1}F_r\), \(\nu_{\mathrm{flag}}(H_N)=\mathbf 1=(1,\ldots,1)\), and \(\sigma\) and \(P_n^\vee\) are defined in Section~4.
\end{thm}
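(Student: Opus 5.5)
The plan is to prove the displayed equality by a containment plus a cardinality count. Since \(\nu_{\mathrm{flag}}\) is a valuation with one-dimensional leaves, the set of values \(\{\nu_{\mathrm{flag}}(s):0\ne s\in H^0(\PP^N,\mathcal O((N+1)k))\}\) has exactly
\[
h^0\bigl(\mathcal O((N+1)k)\bigr)=\binom{(N+1)k+N}{N}
\]
elements. So it suffices to show two things: (i) the normalized, permuted values lie in \(kP_n^\vee\cap\ZZ^{2n-1}\), and (ii) \(\#\bigl(kP_n^\vee\cap\ZZ^{2n-1}\bigr)=\binom{2nk+2n-1}{2n-1}\) for every \(k\ge0\).

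\emph{Construction of the flag.} I would take the first \(n-1\) steps along the quadratic factors: \(Y_r=V(F_1,\dots,F_r)\) for \(1\le r\le n-1\). The remaining \(n\) steps cut by linear forms, in such a way that \(p_0\), \(p_N\) and each \(F_r\) become, near the flag point \(Y_N\), part of a regular system of parameters. The point \(Y_N\) should lie on all components of \(D_C\), and the components should meet transversally there. Since \(F_r=(-1)^r p_0p_{N}+\dots\) is linear in \(p_r\) and \(p_{N-r}\) once the other coordinates are fixed, I would choose the point \([0:\dots:1:\dots:0]\) at which the \(F_r\) have independent differentials. This means checking that the Jacobian of \((p_0/p_m,\,p_N/p_m,\,F_1/p_m^2,\dots,F_{n-1}/p_m^2)\) has full rank there. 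With the flag ordered this way, each factor of \(H_N\) vanishes to order exactly one in exactly one coordinate slot. This gives \(\nu_{\mathrm{flag}}(H_N)=\mathbf 1\), and it justifies the normalization \(s\mapsto s/H_N^k\) as subtracting \(k\mathbf 1\).

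\emph{Containment.} Next I make \(P_n^\vee\) explicit. The exponent vectors of \(W_{\mathrm{Lus}}^{(2n-1)}\) are the \(2n-1\) standard basis vectors together with the two vectors \(-(1,\dots,1,2,1,\dots,1)+e_{a_1}\) and \(-(1,\dots,1,2,1,\dots,1)+e_{c_1}\). Hence \(P_n^\vee\) is cut out by \(u_i\ge-1\) for all \(i\) and by the two inequalities \(\langle u,v\rangle\ge-1\) for the two extra vertices \(v\). After applying \(\sigma\) and the shift \(x=u+\mathbf 1\), these become \(x\ge0\) together with two linear upper bounds. These bounds express that the \(b\)-coordinate counts double and that \(a_1\) or \(c_1\) is excluded.

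The inequalities \(x\ge0\) are automatic. For the two upper bounds, I would find two functionals \(\ell_1,\ell_2\) on \(\ZZ^{2n-1}\) with the following property: \(\ell_j(\nu(s))\le\deg s\) for every section \(s\). This amounts to a pair of degree bounds, namely the order of vanishing at a point along suitable weighted directions. Such a bound holds for all sections as soon as it holds for the monomials \(p^\alpha\) and is superadditive. The natural choice is to realize \(\ell_j\) as weighted vanishing orders at the flag point, with weights read off from the local expansions of \(F_r\).

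\emph{Counting.} For (ii), I would compute the Ehrhart polynomial of \(P_n^\vee\) by slicing along the \(b\)-coordinate, where the two extra inequalities decouple into simplices. I expect to verify the resulting sum against \(\binom{2nk+2n-1}{2n-1}\) by a binomial identity, checking the case \(n=2\) against the \(\PP^3\) toy model of Section~4.2. Alternatively, one could show that \(P_n^\vee\) is unimodularly equivalent to a lattice polytope with known \(h^*\)-vector \((1,0,\dots,0)\) scaled by \(2n\), which would mean it is a reflexive simplex-like polytope of normalized volume \((2n)^{2n-1}\).

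I expect the main obstacle to be the containment step. Precisely, one has to identify functionals \(\ell_j\) that simultaneously bound every value and match the facet normals of \(P_n^\vee\) after \(\sigma\). If the direct local-expansion argument is too messy, I would instead exhibit, for each lattice point of \(kP_n^\vee\), an explicit product of \(p_i\)'s and \(F_r\)'s realizing it. This gives the reverse containment \(\supseteq\), which combined with the cardinality count in (ii) again yields equality.
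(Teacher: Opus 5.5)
Your reduction is sound: the value set has exactly \(h^0\) elements, so one inclusion plus a lattice-point count would suffice. The gap is that the flag you propose fails for \(n\ge3\), and the containment step you leave open is exactly what the right flag makes trivial. There are four concrete problems.

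(a) \(Y_r=V(F_1,\dots,F_r)\) is reducible for \(r\ge2\). Since \(F_r=p_rp_{N-r}-F_{r-1}\), you get \(V(F_1,F_2)=V(F_1,p_2)\cup V(F_1,p_{N-2})\), and similarly further down.

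(b) No coordinate point has independent \(dF_1,\dots,dF_{n-1}\). Because \(F_r=\sum_{i\le r}\pm p_ip_{N-i}\), every nonzero \(dF_r\) at the coordinate point \(e_m\) equals \(\pm dp_{N-m}\).

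(c) If each of the \(n+1\) factors of \(H_N\) contributes a single \(1\) in a single slot, then \(\nu_{\mathrm{flag}}(H_N)\) has entry sum \(n+1<2n-1\), so it is not \(\mathbf 1\).

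(d) Even an irreducible second step cut out on \(Y_1\) by a quadric caps the second raw coordinate at \(nk\). With the given \(\sigma\), however, \(kP_n^\vee\) contains the point \(B=-k\), \(A_{n-1}=(2n-1)k\), all other coordinates \(-k\), whose raw second coordinate is \(2nk\). Only \(B\) has coefficient \(2\) in the facets of \(P_n^\vee\).

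For \(n=2\), where there is a single quadric, your plan essentially reduces to the \(\PP^3\) toy model.

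What is missing is a flag with exactly one quadric step, after which the remaining \(F_r\) become coordinate products. The paper takes \(Y_1=Q_{n-1}=\{F_{n-1}=0\}\). Then, for \(m=n-1,\dots,2\), it cuts by \(p_m=0\), which gives the cone over \(Z_{m-1}\) by the recursion, and then by \(p_{N-m}=0\), which gives \(Z_{m-1}\). This ends at \(Q_1\cong\PP^1\times\PP^1\) with its toric flag at \(p_\infty=[0:\cdots:0:1]\). On \(Z_m\), \(F_{m-1}\) restricts to \(p_mp_{N-m}\) and fills two slots. On \(Q_1\), \(p_0p_N\) restricts to \(xy\) with \(p_N\) a unit at \(p_\infty\), so the flag point is deliberately not on every component of \(D_C\). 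Together these give \(\nu_{\mathrm{flag}}(H_N)=\mathbf 1\).

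Containment is then degree bookkeeping. The residual section on \(Q_1\) lies in \(H^0(\mathcal O(\delta,\delta))\) with \(\delta=2nk-2r-\sum_{m\ge2}(u_m+v_m)\), and \(u_1\le\delta\), \(v_1\le\delta\) are exactly the two facet inequalities. So your \(\ell_1,\ell_2\) are \(2r+\sum_{m\ge2}(u_m+v_m)\) plus \(u_1\), respectively plus \(v_1\).

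Your suggested shortcut of checking \(\ell_j(\nu(s))\le\deg s\) on monomials does not work. The valuation axioms give only lower bounds for the value of a sum, and you need an upper bound. Cancellation can push the value of a combination above that of every term; for example, \(F_{n-1}\) has value \((1,0,\dots,0)\), while each of its monomials has first coordinate \(0\).

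With the right flag, the explicit sections \(F_{n-1}^r\prod_{m\ge2}p_m^{u_m}p_{N-m}^{v_m}\widetilde\sigma\) give the reverse inclusion directly. The paper therefore needs no Ehrhart count, though your count would be a legitimate substitute for one of the two inclusions.
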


\begin{cor}\label{cor:intro-toric-degen}
Let \(S_{\mathrm{flag}}\) be the degree-augmented value semigroup of the flag valuation in Theorem~\ref{thm:intro-no-body}. Under the unimodular lattice automorphism \(\Sigma\), it is identified with
\[
S_n=\{(k,m):k\ge0,\ m\in kP_n^\vee\cap\ZZ^{2n-1}\}.
\]
The Rees algebra of the flag-valuation filtration gives a flat degeneration
\[
\PP^{2n-1}\;\rightsquigarrow\; \operatorname{Proj}\CC[S_n].
\]
The fan of this toric variety is the normal fan of \(P_n^\vee\), equivalently the face fan of \(\operatorname{Newt}(W_{\mathrm{Lus}}^{(2n-1)})\).
\end{cor}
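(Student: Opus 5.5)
We deduce the corollary from Theorem~\ref{thm:intro-no-body} together with Anderson's construction \cite{Anderson} and standard facts on Newton--Okounkov bodies \cite{KK,LM}. The plan has three steps.

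\textbf{The semigroup.} Here \(S_{\mathrm{flag}}\) is graded by the anticanonical degree, so it is the set of pairs \((k,\nu_{\mathrm{flag}}(s))\) with \(0\ne s\in H^0(\PP^N,\mathcal O((N+1)k))\). Define
\[
\Sigma(k,v)=\bigl(k,\sigma(v-k\mathbf 1)\bigr).
\]
This map is a composition of the shear \((k,v)\mapsto(k,v-k\mathbf 1)\) and the lattice permutation \(\sigma\) acting on the second factor. Both are unimodular automorphisms of \(\ZZ\times\ZZ^{2n-1}\), and both respect the grading. Theorem~\ref{thm:intro-no-body} identifies the degree-\(k\) slice of \(\Sigma(S_{\mathrm{flag}})\) with \(kP_n^\vee\cap\ZZ^{2n-1}\) for every \(k\ge0\). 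Hence \(\Sigma(S_{\mathrm{flag}})=S_n\).

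\textbf{The flat degeneration.} The semigroup \(S_n\) is the set of lattice points of the cone over the rational polytope \(P_n^\vee\). By Gordan's lemma it is finitely generated. We will check that \(P_n^\vee\) is a lattice polytope, i.e.\ that its vertices are integral. This should follow from Section~4: the degree-one slice already contains the images of valuations of explicit sections, such as the monomials \(p_i^{N+1}\) suitably normalized, and these can be matched with the vertices.

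\textbf{The main obstacle.} Lattice vertices alone do not make the semigroup generated in degree one, and Anderson's theorem does not need that. His theorem only requires the value semigroup of a full-rank valuation on the section ring \(R=\bigoplus_k H^0(\mathcal O((N+1)k))\) to be finitely generated. Two further hypotheses have to be checked:
\begin{itemize}
\item the flag is admissible, so the valuation has one-dimensional leaves;
\item the group generated by \(S_{\mathrm{flag}}\) is all of \(\ZZ^{2n}\).
\end{itemize}
The second holds because \(P_n^\vee\) is full-dimensional and contains \(0\) in its interior, as the polar dual of a polytope whose interior contains the origin. Anderson's construction then filters \(R\) by the valuation. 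Its Rees algebra is flat over \(\CC[t]\), with generic fibre \(R\) and special fibre \(\operatorname{gr}R\cong\CC[S_{\mathrm{flag}}]\cong\CC[S_n]\). Taking Proj gives \(\PP^{2n-1}\rightsquigarrow\operatorname{Proj}\CC[S_n]\); this uses \(\operatorname{Proj}R\cong\PP^N\), since \(\mathcal O(N+1)\) is ample.

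\textbf{The fan.} \(\operatorname{Proj}\) of the semigroup algebra of the cone over a full-dimensional polytope \(Q\) is the projective toric variety \(X_Q\), whose fan is the normal fan of \(Q\). Section~4 defines \(P_n=\operatorname{Newt}(W_{\mathrm{Lus}}^{(2n-1)})\), which contains the origin in its interior. For such a polytope, standard polar duality says the normal fan of \(P_n^\vee\) equals the face fan of \(P_n\). The equivalence in the statement is therefore just this duality.
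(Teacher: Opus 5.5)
Your outline follows the paper's own route. You read Proposition~\ref{prop:no-body-p2n} degree by degree through the unimodular map \(\Sigma\), use Gordan's lemma, then admissibility of the flag (Lemma~\ref{lem:flag-admissible}) together with Anderson's theorem as in Theorem~\ref{thm:no-toric-degeneration}, and finally polar duality for the fan. Your extra full-rank check is harmless, since the lattice points of a full-dimensional rational cone always generate the lattice. You are also right that integrality of the vertices is not needed. Your suggested way of realizing the vertices would not work as stated, however. Every monomial \(p^\alpha\) has order \(0\) along the irreducible quadric \(Q_{n-1}\), hence \(B=-1\). But \(P_n^\vee\) has the vertex \((B,A_1,C_1,\ldots,A_{n-1},C_{n-1})=(n-1,-1,\ldots,-1)\), which is realized by \(F_{n-1}^{\,n}\), not by a monomial.

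The step you take for granted is the final equivalence. The paper never defines \(P_n^\vee\) as a polar dual. It defines it by explicit inequalities, and the identification with \(\operatorname{Newt}(W_{\mathrm{Lus}}^{(2n-1)})^\vee\) is exactly what the corollary after Theorem~\ref{thm:no-toric-degeneration} proves. To close this:
\begin{enumerate}
\item Set \(q=1\) and split the last term of \(W_{\mathrm{Lus}}^{(2n-1)}\) into the two monomials \(1/(a_2\cdots a_{n-1}b^2c_{n-1}\cdots c_1)\) and \(1/(a_1\cdots a_{n-1}b^2c_{n-1}\cdots c_2)\).
\item Match exponent and value coordinates by \(b\leftrightarrow B\), \(a_m\leftrightarrow A_m\), \(c_m\leftrightarrow C_m\); this is what \(\sigma\) is for.
\item Check that the conditions \(\langle u,v\rangle\ge-1\) for the \(2n+1\) exponent vectors are exactly the \(2n+1\) defining inequalities of \(P_n^\vee\).
\item Check that the origin is interior to the Newton polytope. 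Write \(e_b,e_{a_m},e_{c_m}\) for the exponent vectors of \(b,a_m,c_m\), and \(v_1,v_2\) for the two negative exponent vectors. These vectors span, and \(v_1+v_2+4e_b+e_{a_1}+e_{c_1}+2\sum_{m\ge2}(e_{a_m}+e_{c_m})=0\) with all coefficients positive.
\end{enumerate}
Bipolarity then gives \(\operatorname{Newt}(W_{\mathrm{Lus}}^{(2n-1)})=(P_n^\vee)^\vee\), so the normal fan of \(P_n^\vee\) is the face fan of the Newton polytope. With this added, your argument is complete.
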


These data lead to three degeneration pictures. The standard toric coordinate valuation gives the toric degeneration \(\PP^N\rightsquigarrow \PP^N\), which is trivial as a degeneration of toric varieties but useful as a comparison. The type \(C_n\) flag valuation gives the toric degeneration controlled by \(P_n^\vee\). Finally, a rank-one weight degeneration sends the pair \((\PP^N,D_C)\) to \((\PP^N,D_{\mathrm{tor}})\) while keeping the ambient projective space fixed.

In type \(A\), Rietsch and Williams related cluster valuations, Newton--Okounkov bodies, superpotential polytopes, and toric degenerations for Grassmannians \cite{RW}. Related cluster-duality phenomena for Lagrangian and orthogonal Grassmannians were studied by Wang \cite{clusterC}, who compared a cluster Newton--Okounkov body with a superpotential polytope up to unimodular equivalence. Although this does not directly identify the flag valuation used here, it motivates the following question.

\begin{ques}
Does the flag valuation of Theorem~\ref{thm:intro-no-body}, or its value semigroup after the lattice identification \(\Sigma\), admit a cluster-theoretic interpretation? Can the corresponding toric degeneration be described in cluster-theoretic terms?
\end{ques}

The paper is organized as follows. Section~2 recalls the standard toric mirror of projective space and Rietsch's mirror construction. Section~3 computes \(D_C\) and the Lusztig-torus superpotential. Section~4 compares the standard toric valuation with the type \(C_n\) flag valuation and proves the toric and rank-one degenerations.

{\small\noindent\textbf{Acknowledgements.} The author thanks Konstanze Rietsch for her guidance and encouragement, Hannah Tillmann-Morris for her unpublished manuscript, and Changzheng Li for helpful discussions. AI language tools were used only for writing assistance and editorial polishing. The author is responsible for the mathematical content, accuracy, integrity, originality, and final form of the paper. This work was supported by the London School of Geometry and Number Theory Centre for Doctoral Training (CDT), a joint venture between University College London, Imperial College London and King's College London.}

\section{Mirror Constructions}

Fix \(n\ge2\) and put \(N=2n-1\).

\subsection{The standard toric mirror}

Let \(\PP^N\) have homogeneous coordinates \([p_0:\cdots:p_N]\). The standard
toric boundary is
\[
D_{\mathrm{tor}}
=
\sum_{i=0}^N\{p_i=0\}
=
\left\{\prod_{i=0}^N p_i=0\right\},
\]
an anticanonical divisor because it has degree \(N+1\). On the affine chart
\(p_0\ne 0\), write \(x_i=p_i/p_0\) for \(1\leq i\leq N\).

\begin{exam}\label{ex:tor-PN}
For the standard toric structure on \(\PP^N\) \cite{Bat93,Givental,Givental98,HV}, the fan rays are
\(e_1,\ldots,e_N\) and \(-e_1-\cdots-e_N\). After the one-parameter
specialization, the toric superpotential is
\[
W_{\mathrm{tor}}^{(N)}(x_1,\ldots,x_N)
=
x_1+\cdots+x_N+\frac{q}{x_1\cdots x_N}.
\]
The compactifying boundary of this mirror is the toric boundary
\(D_{\mathrm{tor}}\).
\end{exam}

Let \(P_{\mathrm{tor}}^{(N)}=\operatorname{Newt}(W_{\mathrm{tor}}^{(N)})\). Then
\[
P_{\mathrm{tor}}^{(N)}
=
\operatorname{Conv}\{e_1,\ldots,e_N,\,-e_1-\cdots-e_N\}
\subset \RR^N.
\]
With the convention
\[
P^\vee=\{u:\langle u,v\rangle\ge -1\ \text{for all }v\in P\},
\]
its polar dual is
\[
\bigl(P_{\mathrm{tor}}^{(N)}\bigr)^\vee
=
\{u=(u_1,\ldots,u_N)\in\RR^N:
u_i\ge -1\ \text{for all }i,\ 
u_1+\cdots+u_N\le1\}.
\]

\subsection{Rietsch's Lie-theoretical mirror}

All varieties are over \(\CC\). We fix the notation from Rietsch's Lie-theoretic mirror construction. Let \(G\) be a connected simply connected semisimple algebraic group. Fix opposite Borel subgroups \(B_+\) and \(B_-\), with unipotent radicals \(U_+\) and \(U_-\), and put
\[
T=B_+\cap B_-.
\]
The Weyl group is
\[
W=N_G(T)/T,
\]
with longest element \(w_0\). For \(w\in W\), we choose a representative in \(G\), denoted by \(\dot w\).

Let \(\mathfrak g,\mathfrak b_\pm,\mathfrak u_\pm,\mathfrak h\) be the Lie algebras of \(G,B_\pm,U_\pm,T\). Let \(\Delta_+\) be the set of positive roots determined by \(B_+\). We write \(I\) for the index set of simple roots \(\{\alpha_i:i\in I\}\). For \(i\in I\), choose Chevalley generators
\[
e_i\in\mathfrak g_{\alpha_i},
\qquad
f_i\in\mathfrak g_{-\alpha_i}.
\]
They define one parameter subgroups
\[
x_i(a)=\exp(ae_i),
\qquad
y_i(a)=\exp(af_i).
\]
We take
\[
\dot s_i=x_i(1)y_i(-1)x_i(1)
\]
as the representative of the simple reflection \(s_i\).

Let \(P\supset B_-\) be a parabolic subgroup. Define
\[
I_P=\{i\in I:\dot s_i\in P\},
\qquad
I^P=I\setminus I_P.
\]
The parabolic Weyl subgroup is
\[
W_P=\langle s_i:i\in I_P\rangle,
\]
with longest element \(w_P\). We also use
\[
W^P=\{w\in W:\ell(ws_i)>\ell(w)\text{ for all }i\in I_P\}
\]
for the minimal length representatives of \(W/W_P\). If
\[
I^P=\{n_1,\ldots,n_k\},
\qquad
n_1<\cdots<n_k,
\]
then \(k=\dim H^2(G/P,\CC)\).

Let \(G^\vee\) be the Langlands dual group. Since \(G\) is simply connected, \(G^\vee\) is adjoint. We use the notation \(B_\pm^\vee,U_\pm^\vee,T^\vee,\mathfrak g^\vee\), and Chevalley generators \(e_i^\vee,f_i^\vee\). The Weyl group is again \(W\), and we keep the dot notation \(\dot w\) for representatives in \(G^\vee\) when no confusion can occur.

For \(v\leq w\) in the Bruhat order, the open Richardson variety is the intersection of a Schubert variety and an opposite Schubert variety in \(G^\vee/B_-^\vee\):
\[
\mathcal R_{v,w}^\vee
=
\left(B_+^\vee\dot vB_-^\vee\cap B_-^\vee\dot wB_-^\vee\right)/B_-^\vee
\subset G^\vee/B_-^\vee .
\]
It is smooth and irreducible of dimension \(\ell(w)-\ell(v)\). The open Richardson variety relevant to \(G/P\) is \(\mathcal R_{w_P,w_0}^\vee\). Let \(\pi:G^\vee/B_-^\vee\to G^\vee/P^\vee\) be the natural projection. Its image is a projected Richardson stratum in the sense of Knutson--Lam--Speyer \cite{KLS}. We denote the reduced complement of this image by
\[
D_{\mathrm{Lie}}^\vee=G^\vee/P^\vee\setminus \pi(\mathcal R_{w_P,w_0}^\vee).
\]
The divisor \(D_{\mathrm{Lie}}^\vee\) is the Rietsch boundary, namely the boundary selected by the Lie-theoretical mirror.

The parameters come from the small quantum cohomology ring of \(G/P\), which is a deformation of \(H^*(G/P)\) with
\[
k=\dim H^2(G/P,\CC)
\]
parameters \(q_1,\ldots,q_k\). These parameters correspond to a basis of curve classes dual to the Schubert divisor classes. In the above Lie-theoretic notation, they are indexed by
\[
I^P=\{n_1,\ldots,n_k\}.
\]

Rietsch identifies the nonzero quantum parameter space with the torus \((T^\vee)^{W_P}\). More precisely,
\[
(T^\vee)^{W_P}=\{t\in T^\vee:\alpha_i^\vee(t)=1\ \text{for all }i\in I_P\},
\]
and the coordinates \(q_j\) are identified with the characters \(\alpha_{n_j}^\vee\) on this torus. Thus working over \((T^\vee)^{W_P}\) is the same as inverting the quantum parameters.


For fixed \(t\in (T^\vee)^{W_P}\), we have 
\[
Z_P^t=B_-^\vee\cap U_+^\vee\,t\dot w_P\dot w_0^{-1}U_+^\vee .
\]
These fibres form a family
\[
Z_P=
\{(t,b)\in (T^\vee)^{W_P}\times B_-^\vee:
b\in U_+^\vee\,t\dot w_P\dot w_0^{-1}U_+^\vee\}
\]
over \((T^\vee)^{W_P}\) by projection to the first factor. 

Rietsch proves that \(Z_P^t\) is smooth of dimension \(\dim G/P\), and the map
\[
Z_P^t\longrightarrow \mathcal R_{w_P,w_0}^\vee,\qquad
b\longmapsto b\dot w_0B_-^\vee/B_-^\vee
\]
is an isomorphism. Hence each fibre has a natural compactification by \(G^\vee/P^\vee\), with boundary \(D_{\mathrm{Lie}}^\vee\).

Define \(F=\sum_{i\in I}(e_i^\vee)^*\in(\mathfrak g^\vee)^*\), where \(e_i^\vee\) are the Chevalley generators fixed above. Let \(\rho\) denote the sum of the fundamental coweights. By definition, each \(b\in Z_P^t\) has a factorization
\[
b=u_1t\dot w_P\dot w_0^{-1}u_2^{-1},
\qquad u_1,u_2\in U_+^\vee,
\]
and Rietsch's superpotential \cite{Rietsch} is
\[
\mathcal F_P(t,b)=F(u_2\cdot\rho)-F(u_1\cdot\rho).
\]
This expression is equivalent to the representation-theoretic definition and gives a regular function on \(Z_P\).

The Rietsch mirror of \(G/P\) is the Landau--Ginzburg model \((Z_P,\mathcal F_P)\to (T^\vee)^{W_P}\). After fixing the quantum parameters \(t\), we can also write it as
\[
\left(X_P^\vee,\mathcal W_t\right)
=
\left(G^\vee/P^\vee\setminus D_{\mathrm{Lie}}^\vee,\ \mathcal F_P|_{Z_P^t}\right),
\]
where \(Z_P^t\simeq X_P^\vee\) through \(b\mapsto b\dot w_0B_-^\vee\). The critical locus of \(\mathcal F_P\) along the fibres recovers Peterson's presentation of the quantum cohomology of \(G/P\) with the quantum parameters inverted \cite{Peterson,LS}.

For the type \(C_n\) model considered below, the original homogeneous space is
\[
Sp_{2n}/P_1\simeq \PP^{2n-1}.
\]
The Rietsch mirror for this homogeneous space is defined on the dual odd quadric
\[
SO_{2n+1}/P_1^\vee\simeq Q^{2n-1}.
\]
The fibre \(Z_P^t\) is identified with the open Richardson variety \(\mathcal R_{w_P,w_0}^\vee\), and compactifying this open variety inside \(G^\vee/P_1^\vee\) produces the dual-side boundary
\[
D_{\mathrm{Lie}}^\vee
=
G^\vee/P_1^\vee\setminus \pi(\mathcal R_{w_P,w_0}^\vee).
\]
Following the philosophy of the Gross--Siebert program, the mirror map should be viewed as a relation between log Calabi--Yau pairs, not only between the ambient spaces. Thus, on the original projective space \(\PP^N\), we also use the open Richardson variety attached to the type \(C_n\) homogeneous model \(Sp_{2n}/P_1\). Its complement is the type \(C_n\) boundary \(D_C\subset\PP^N\) computed below. This construction follows Rietsch's Richardson-stratum viewpoint and the notation used by Pech, Rietsch, and Williams~\cite{Rietsch,PRW}.

\section{Anticanonical Divisors}
Put \(N=2n-1\). Recall from Section~2.1 that the standard toric anticanonical boundary is
\[
D_{\mathrm{tor}}=\sum_{i=0}^{N}\{p_i=0\},
\]
and that its mirror Laurent polynomial is \(W_{\mathrm{tor}}^{(N)}\). We now construct the type \(C_n\) boundary \(D_C\) on \(Sp_{2n}/P_1\simeq\PP^{2n-1}\).

\subsection{The type \(C\) boundary}
Let \(G=Sp_{2n}\), and let \(P=P_1\) be the maximal parabolic corresponding to the first node. Since every line in a symplectic vector space is isotropic,
\[
G/P_1\simeq\PP^{2n-1}.
\]

Precisely, for \(V=\CC^{N+1}\) with ordered basis \(e_0,\ldots,e_N\), where \(N=2n-1\), let
\[
J_{i,N-i}=(-1)^i,
\qquad
J_{ij}=0\quad\text{if }i+j\ne N
\]
be the skew matrix.
We have \(G=Sp(V,J)=\{g\in GL(V):g^tJg=J\}\). \footnote{Here we use row-vector conventions.} Let \(P=P_1\) be the stabilizer of the row line \(\CC e_N^t\). Then
\[
P\backslash G\simeq \PP^N,
\qquad
Pg\longmapsto [e_N^tg]=[p_0:\cdots:p_N].
\]
Let \(B_+\), \(B_-\), \(U_+\), \(U_-\) be the upper/lower triangular Borel and unipotent subgroups in this basis. For \(P=P_1\), the parabolic Weyl subgroup is generated by \(s_2,\ldots,s_n\). Choose representatives with \(\dot w_0=J\), and set \(M=\dot w_P\dot w_0^{-1}\). With these conventions, \(M\) is the signed permutation matrix
\[
M_{0N}=-1,
\qquad
M_{N0}=1,
\qquad
M_{ii}=-1\quad(1\le i\le N-1),
\]
all other entries being zero.

Let \(S=B_-\cap U_+MU_+\). The map
\[
S\longrightarrow (B_+\dot w_PB_-\cap B_-\dot w_0B_-)/B_-,
\qquad
b\longmapsto b\dot w_0B_-/B_-,
\]
identifies \(S\) with the open Richardson variety. This Richardson variety belongs to the type \(C_n\) flag variety for the original group and is used only to define the type \(C_n\) boundary \(D_C\); it should not be confused with the dual-side Richardson variety appearing in Rietsch's mirror construction.
Composing with \(P\backslash G\simeq\PP^N\), define
\[
\eta:S\longrightarrow \PP^N,
\qquad
b\longmapsto [e_N^tb\dot w_0].
\]

\begin{prop}\label{prop:M-matrix}
With the conventions above, \(M:=\dot w_P\dot w_0^{-1}\) is the signed permutation matrix with nonzero entries
\[
M_{0N}=-1,\qquad
M_{N0}=1,\qquad
M_{ii}=-1\quad(1\le i\le N-1),
\]
and all other entries zero.
\end{prop}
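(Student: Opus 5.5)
The plan is a direct computation: determine \(\dot w_P\) as an explicit matrix, invert \(\dot w_0=J\), and multiply.

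\textbf{Step 1: compute \(\dot w_0^{-1}\).} Since \(J_{i,N-i}=(-1)^i\), the matrix \(J\) is an antidiagonal signed permutation matrix. Using \(N\) odd, we have \((-1)^{N-i}=-(-1)^i\). Hence
\[
(J^2)_{ii}=J_{i,N-i}J_{N-i,i}=(-1)^i(-1)^{N-i}=-1,
\]
so \(J^2=-\mathrm{Id}\) and \(\dot w_0^{-1}=J^{-1}=-J\). Concretely, \((J^{-1})_{i,N-i}=-(-1)^i=(-1)^{i+1}\).

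\textbf{Step 2: identify \(\dot w_P\).} The Levi factor of \(P_1\) containing \(T\) is \(GL_1\times Sp(\langle e_1,\dots,e_{N-1}\rangle)\). The subgroup \(W_P=\langle s_2,\dots,s_n\rangle\) is the type \(C_{n-1}\) Weyl group of the inner block \(e_1,\dots,e_{N-1}\). Its longest element acts as \(-1\) on the weights of that block, so it reverses the inner indices \(i\mapsto N-i\) and fixes \(e_0,e_N\).

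To fix the signs, I use the standard fact that with representatives \(\dot s_i=x_i(1)y_i(-1)x_i(1)\), the product over a reduced word of the longest element of a symplectic group in this antidiagonal form is \(J\) itself. This is consistent with the paper's normalization \(\dot w_0=J\). Applied to the inner \(Sp_{2n-2}\), it gives \(\dot w_P=\mathrm{diag}(1,J',1)\), where \(J'\) is the inner antidiagonal matrix. Its entries are \(\dot w_P\) entries \((i,N-i)\) equal to \(\epsilon(-1)^{i-1}\) for \(1\le i\le N-1\), with \(\epsilon=\pm1\) a single global sign fixed by the convention. I would verify the sign convention on the inner block by checking \(n=2\) directly: there the inner block is \(SL_2\), and \(\dot s_2\) equals \(\begin{pmatrix}0&1\\-1&0\end{pmatrix}\) in the sense of the paper's choice. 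This is consistent with \(J'_{1,N-1}=(-1)^{1-1}\).

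\textbf{Step 3: multiply.} Compute \(M=\dot w_P J^{-1}\). For the outer rows:
\[
M_{0N}=1\cdot (J^{-1})_{0N}=(-1)^{1}=-1,
\qquad
M_{N0}=(J^{-1})_{N0}=(-1)^{N+1}=1.
\]
Both match the statement. For inner \(i\), we get
\[
M_{ii}=(\dot w_P)_{i,N-i}\,(J^{-1})_{N-i,i}=\epsilon(-1)^{i-1}(-1)^{N-i+1}=\epsilon(-1)^{N}=-\epsilon.
\]
So with \(\epsilon=1\) this gives \(-1\), as claimed. All other entries vanish because \(M\) is a product of monomial matrices whose permutations compose to the identity on the inner block and to the swap \(0\leftrightarrow N\).

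\textbf{Main obstacle.} The real content is the sign \(\epsilon\), that is, confirming that the product of the \(\dot s_i\) over a reduced word for \(w_P\) gives the inner \(J'\) with positive sign, compatibly with \(\dot w_0=J\). I would try to settle this using the compatibility \(\dot w_0=\dot w_P\,\dot w^P\) for a reduced factorization. Alternatively, I would note that the statement's conventions (choose \(\dot w_0=J\)) implicitly fix the representatives compatibly, and check the \(n=2\) case by explicit matrices, then induct on \(n\) via the block embedding \(Sp_{2n-2}\subset Sp_{2n}\).
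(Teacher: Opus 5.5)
Your proposal is correct and follows the paper's proof: you invert \(\dot w_0=J\) as \(-J\), identify \(\dot w_P\) as the signed permutation that fixes \(e_0,e_N\) and reverses the middle block, and multiply. You track the inner sign \(\epsilon\) explicitly, which is more careful than the paper's proof; the paper just asserts \((\dot w_P)_{i,N-i}=(-1)^{i+1}\), which is your \(\epsilon=1\), i.e.\ minus the restriction of \(J\), and this is what the products of the \(\dot s_i\) give, for instance for \(n=2,3\). Your caution is justified, since the inner block of \(\dot w_P\) displayed in Example~\ref{ex:n3-matrices} has the opposite sign and, multiplied by \(-J\), would give \(M_{ii}=+1\).
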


\begin{proof}
The representative \(\dot w_0 = J\) acts on the ordered basis by
\[
\dot w_0(e_i)=J(e_i)=(-1)^i e_{N-i}\qquad(0\le i\le N),
\]
and its inverse is \(\dot w_0^{-1}=J^{-1}=-J\) because \(J^2=-I\) (note that \(N=2n-1\) is odd).

The parabolic Weyl subgroup of \(P=P_1\) is \(W_P=\langle s_2,\dots,s_n\rangle\); its longest element \(w_P\) fixes the one-dimensional weight spaces corresponding to the extremal vectors \(e_0\) and \(e_N\), and on the middle block \(\operatorname{Span}\{e_1,\dots,e_{N-1}\}\) it acts as the longest element of type \(C_{n-1}\).  In the standard signed-permutation model this action is
\[
\dot w_P(e_0)=e_0,\qquad
\dot w_P(e_N)=e_N,\qquad
\dot w_P(e_i)=(-1)^{\,i+1}e_{N-i}\quad(1\le i\le N-1).
\]

Multiplying \(\dot w_P\) by \(\dot w_0^{-1}\) gives the stated signed permutation matrix \(M=\dot w_P\dot w_0^{-1}\).
\end{proof}

\begin{exam}\label{ex:n3-matrices}
For \(n=3\) we have \(N=2n-1=5\) and the matrices act on \(\CC^6\) with basis \(e_0,\dots,e_5\).
The symplectic form is \(J\) with \(J_{i,5-i}=(-1)^i\); its matrix is
\[
\dot w_0 = J =
\begin{pmatrix}
 0& 0& 0& 0& 0& 1\\
 0& 0& 0& 0&-1& 0\\
 0& 0& 0& 1& 0& 0\\
 0& 0&-1& 0& 0& 0\\
 0& 1& 0& 0& 0& 0\\
-1& 0& 0& 0& 0& 0
\end{pmatrix}.
\]
The parabolic Weyl subgroup is \(W_P=\langle s_2,s_3\rangle\) of type \(C_2\); its longest element acts by \(\dot w_P(e_0)=e_0\), \(\dot w_P(e_5)=e_5\), and \(\dot w_P(e_i)=(-1)^{i+1}e_{5-i}\) for \(1\le i\le 4\), giving
\[
\dot w_P =
\begin{pmatrix}
1& 0& 0& 0& 0& 0\\
0& 0& 0& 0&-1& 0\\
0& 0& 0& 1& 0& 0\\
0& 0&-1& 0& 0& 0\\
0& 1& 0& 0& 0& 0\\
0& 0& 0& 0& 0& 1
\end{pmatrix}.
\]
Multiplying \(\dot w_P\) with \(\dot w_0^{-1}=-J\) yields
\[
M = \dot w_P\dot w_0^{-1} =
\begin{pmatrix}
 0& 0& 0& 0& 0&-1\\
 0&-1& 0& 0& 0& 0\\
 0& 0&-1& 0& 0& 0\\
 0& 0& 0&-1& 0& 0\\
 0& 0& 0& 0&-1& 0\\
 1& 0& 0& 0& 0& 0
\end{pmatrix},
\]
recovering the pattern \(M_{05}=-1\), \(M_{50}=1\) and \(M_{ii}=-1\) for \(i=1,2,3,4\).
\end{exam}

For a row vector \(x=(x_0,\ldots,x_N)\), put
\[
F_r(x)=\sum_{j=0}^r(-1)^jx_{r-j}x_{N-r+j}
\qquad(0\le r\le n-1).
\]
Thus \(F_0(x)=x_N\). In homogeneous coordinates \([p_0:\cdots:p_N]\), we write \(F_r=F_r(p)\) for \(1\le r\le n-1\).

\begin{thm}\label{thm:richardson-boundary}
The type \(C_n\) open locus in \(\PP^N=\PP^{2n-1}\) is
\[
\eta(S)=
\{[p_0:\cdots:p_N]\in \PP^N:
 p_0p_NF_1\cdots F_{n-1}\ne0\}.
\]
Consequently the reduced type \(C_n\) boundary is
\[
D_C
=
\operatorname{div}\bigl(p_0p_NF_1\cdots F_{n-1}\bigr)_{\mathrm{red}}
=
\{p_0=0\}
\cup
\left(\bigcup_{r=1}^{n-1}\{F_r=0\}\right)
\cup
\{p_N=0\}.
\]
\end{thm}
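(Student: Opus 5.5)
The plan is to prove the two inclusions separately: an explicit computation gives \(\eta(S)\subseteq\{p_0p_NF_1\cdots F_{n-1}\ne0\}\), and a divisor-class argument gives the reverse containment of complements. The second description of \(D_C\) then follows at once, since the union of hypersurfaces \(\{p_0=0\}\), \(\{F_r=0\}\), \(\{p_N=0\}\) is the support of \(\operatorname{div}(p_0p_NF_1\cdots F_{n-1})\).

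\emph{Step 1: the forward inclusion.} For \(b\in S=B_-\cap U_+MU_+\), the point \(\eta(b)=[e_N^tb\dot w_0]\) is the last row of \(b\), permuted and signed by \(J\). Concretely, \(x_i=\pm b_{N,N-i}\). Hence \(p_0=\pm b_{NN}\), and \(b_{NN}\) is a diagonal entry of the invertible lower triangular matrix \(b\), so it is nonzero.

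The remaining conditions come from \(b\in U_+MU_+\). Write \(b=u_1Mu_2\) with \(u_1,u_2\in U_+\). Then the generalized minors \(\Delta_{M\omega_i,\omega_i}\) are unaffected by right multiplication by \(u_2\) and by left multiplication by \(u_1\), so they equal the corresponding minors of \(M\), which are \(\pm1\). For a lower triangular \(b\) these are the \(r\times r\) minors supported in the bottom-left corner, in rows \(\{N-r+1,\dots,N\}\) and columns \(\{0,\dots,r-1\}\).

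Using the symplectic relation \(b^tJb=J\), I will rewrite these minors in terms of the last row alone. The key identity to prove is that the relevant minor (or the ratio of two consecutive ones) equals, up to sign and a power of \(b_{NN}\), the quadratic \(F_r(x)\). Here \(F_r(x)=\omega(x,x')\) is the symplectic pairing of the last row with a shifted copy of itself. The extreme cases are \(r=0\), which gives \(F_0=x_N=\pm b_{N0}=\pm M\)-minor, so \(p_N\neq0\), and the case involving \(p_0\) handled above. I would first verify this identity in the case \(n=2\), and in the case \(n=3\) using the matrices of Example~\ref{ex:n3-matrices}, before proving it in general by induction on \(r\) via Laplace expansion.

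\emph{Step 2: the reverse inclusion by degree.} By Knutson--Lam--Speyer, the complement of a projected open Richardson variety is a reduced divisor. Moreover, for the open stratum \(\pi(\mathcal R_{w_P,w_0})\) this complement is anticanonical. Since \(\eta(S)\) misses each hypersurface \(\{p_0=0\}\), \(\{p_N=0\}\), \(\{F_r=0\}\), each of these is contained in the boundary.

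These hypersurfaces are pairwise distinct. The degrees already sum to \(1+1+2(n-1)=2n=N+1\), which equals the anticanonical degree. Provided each \(F_r\) is irreducible, so that every hypersurface contributes a genuinely distinct reduced component, the boundary cannot contain any further component. It follows that the boundary equals their union, and therefore \(\eta(S)\) equals the stated open locus. Irreducibility of \(F_r\) holds because, for \(r\ge1\), \(F_r\) is a nondegenerate quadratic form in \(2(r+1)\ge4\) variables.

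\emph{Main obstacle.} The hard part is Step 1, namely establishing the exact determinantal identity between the corner minors of \(b\) and the quadrics \(F_r\) of its last row; it relies on \(b\) being symplectic. If the direct identity proves awkward, I will fall back on the following alternative. I would parametrize \(S\) by a Lusztig-type product of one-parameter subgroups \(y_i(t)\) along a reduced word for \(w_0w_P\) of length \(N\). I would then compute the last row explicitly, and check that each \(F_r\) becomes a monomial in the parameters, hence is nonvanishing on the torus chart and on all of \(S\). This alternative has an extra step: passing from the dense chart to the whole of \(S\) would then use that the pullbacks of the \(F_r\) are units on \(S\), which follows from the irreducibility of the boundary components together with Step 2.
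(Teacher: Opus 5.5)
\textbf{There is a genuine gap in Step 1.} This is the step that has to handle the quadrics $F_r$ with $r\ge1$, and it does not work as written.

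\emph{The generalized minors are not constant on $S$.} $\Delta_{M\omega_r,\omega_r}$ is invariant under right multiplication by $U_+$. It is not invariant under left multiplication by all of $U_+$. Its row set is $M\{0,\dots,r-1\}=\{1,\dots,r-1,N\}$, which is not a terminal segment $\{k,\dots,N\}$, and only minors whose rows form such a segment are preserved by $U_+$ on the left. So these minors are not forced to equal their values at $M$ on $U_+MU_+$.

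\emph{The corner minors are constant but zero.} The bottom-left corner minors (rows $\{N-r+1,\dots,N\}$, columns $\{0,\dots,r-1\}$) are indeed invariant under $U_+$ on both sides. But their values at $M$ are $0$ for $2\le r\le N-1$: row $N-1$ of $M$ is $-e_{N-1}^t$, which vanishes on columns $0,\dots,r-1$. So on $S$ they vanish identically. Any identity of the form ``minor $=\pm b_{NN}^k F_r$ of the last row'' would then force $F_r\equiv0$ on the dense set $\eta(S)$.

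As it stands, you have only established $p_0\neq0$ (from $b_{NN}\ne0$) and $p_N\ne0$ (from $b_{N0}=M_{N0}$).

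\emph{The fallback is circular.} Step 2 takes $\eta(S)\cap\{F_r=0\}=\emptyset$ as input, so it cannot be used to pass from a torus chart $T$ to all of $S$. Irreducibility of $\{F_r=0\}$ does not exclude that this hypersurface lies in the closure of $\eta(S\setminus T)$ rather than in the boundary.

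\textbf{How to repair Step 1.} Use the functions that actually work: the southeast principal minors $\Delta_k$, with rows and columns $\{k,\dots,N\}$. These are invariant under left multiplication by $U_+$. Hence for $b=uMv\in B_-$ you get $\Delta_k(Mv)=\Delta_k(b)=b_{kk}\cdots b_{NN}\neq0$.

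The real work, which the paper carries out, is to express $\Delta_k(Mv)$ through $q=e_0^tv=e_N^tb$. Expand along column $k$. Then apply Jacobi's complementary-minor identity together with the symplectic formula $(v^{-1})_{iN}=(-1)^{i+1}q_{N-i}$. This gives the recursion $\Delta_k(Mv)+\Delta_{k+1}(Mv)=q_kq_{N-k}$ with $\Delta_N(Mv)=q_N$. So the $\Delta_{N-s}(Mv)$ run exactly through $F_0(q),\dots,F_{n-1}(q)$, and $p=qJ$ gives $F_r(p)=-F_r(q)$. Your intuition that the symplectic relation turns certain minors into quadrics in the last row is right; it is just the wrong family of minors.

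\textbf{Step 2 is a legitimate alternative once Step 1 is fixed.} The paper proves the reverse inclusion directly. It builds $v(q)\in U_+\cap Sp(V,J)$ with any prescribed first row $q$ satisfying $\prod F_r(q)\neq0$, then Gauss-factors $Mv(q)$; the factors are symplectic by uniqueness of the factorization. You instead use the Knutson--Lam--Speyer fact that the complement of the open projected Richardson stratum is a reduced anticanonical divisor, plus a degree count. Your route is shorter, but it relies on an external theorem. You would also need to check that $\eta$ agrees with the KLS projection under the paper's row-vector and $P\backslash G$ conventions.
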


The proof of Theorem~\ref{thm:richardson-boundary} relies on the following criterion for Gauss factorization.
\begin{lemma}\label{lem:southeast-elimination}
Let \(A\in GL(V)\) where \(V\) is an \((N+1)\)-dimensional vector space with ordered basis \(e_0,\dots,e_N\).
Define the southeast principal minors
\[
\Delta_k(A)=
\det A_{\{k,k+1,\dots,N\},\{k,k+1,\dots,N\}}
\qquad(1\le k\le N),
\]
and set \(\Delta_{N+1}(A)=1\).
Then \(A\) admits a unique Gauss factorization \(A=u^{-1}b\), with \(u\in U_+\) and \(b\in B_-\), if and only if \(\Delta_k(A)\neq 0\) for \(1\le k\le N\).
Moreover, if \(A\in Sp(V,J)\), then both factors \(u\) and \(b\) belong to \(Sp(V,J)\).
\end{lemma}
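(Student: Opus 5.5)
The plan is to treat the three assertions separately: necessity of the minor condition, sufficiency together with uniqueness, and the symplectic refinement. Only the last one uses the form \(J\).

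\emph{Necessity.} Suppose \(A=u^{-1}b\) with \(u\in U_+\) and \(b\in B_-\). For a row index \(i\ge k\), the entry \(A_{ic}=\sum_j (u^{-1})_{ij}b_{jc}\) only involves \(j\ge i\ge k\), because \(u^{-1}\) is upper triangular. Hence the southeast block factors as
\[
A_{\{k,\dots,N\},\{k,\dots,N\}}=(u^{-1})_{\{k,\dots,N\},\{k,\dots,N\}}\;b_{\{k,\dots,N\},\{k,\dots,N\}}.
\]
The first factor is upper unitriangular and the second is lower triangular. Therefore \(\Delta_k(A)=\prod_{j\ge k}b_{jj}\), which is nonzero because \(b\) is invertible. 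As a by-product, \(b_{kk}=\Delta_k(A)/\Delta_{k+1}(A)\).

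\emph{Sufficiency.} I would argue by bottom-to-top row elimination, inducting on the size of the southeast block. We need \(u\in U_+\) with \(uA\) lower triangular. Equivalently, for each row \(i\), we must add a combination of the rows \(j>i\) so that the entries in columns \(c>i\) vanish. Writing \(u_{i,\{i+1,\dots,N\}}=x\), this is the linear system
\[
A_{i,\{i+1,\dots,N\}}+x\,A_{\{i+1,\dots,N\},\{i+1,\dots,N\}}=0,
\]
which is uniquely solvable because \(\Delta_{i+1}(A)\ne0\). The hypothesis is needed for \(i=0,\dots,N-1\), that is, for \(\Delta_1,\dots,\Delta_N\), and \(\Delta_{N+1}=1\) covers the last row. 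The resulting \(b=uA\) is lower triangular and invertible, so it lies in \(B_-\). Uniqueness follows from \(U_+\cap B_-=\{1\}\): if \(u^{-1}b=u'^{-1}b'\), then \(u'u^{-1}=b'b^{-1}\) lies in this intersection.

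\emph{Symplectic refinement.} I would use the involution
\[
\theta(g)=J^{-1}(g^t)^{-1}J
\]
of \(GL(V)\). Its fixed-point set is exactly \(\{g:g^tJg=J\}=Sp(V,J)\). Since \(J\) is a signed antidiagonal matrix, conjugating by it reverses the index order, so it exchanges upper and lower triangular matrices. Transpose-inverse also exchanges them. The composite \(\theta\) therefore preserves \(U_+\), where the unit diagonal survives because the signs cancel, and it preserves \(B_-\). Applying \(\theta\) to \(A=u^{-1}b\) with \(A\in Sp(V,J)\) gives \(A=\theta(u)^{-1}\theta(b)\), which is another factorization of the same type. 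By uniqueness, \(\theta(u)=u\) and \(\theta(b)=b\), so both factors are symplectic.

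The only step needing care is checking that \(\theta\) really preserves \(U_+\) and \(B_-\) in the row-vector convention with this particular sign pattern on \(J\). I would verify this entrywise: \((J^{-1}XJ)_{ij}=\pm X_{N-i,N-j}\). The remaining steps are standard linear algebra.
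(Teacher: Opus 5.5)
Your proposal is correct and follows essentially the same route as the paper: bottom-up elimination for existence and uniqueness, followed by the same involution \(g\mapsto J^{-1}(g^t)^{-1}J\) and uniqueness for the symplectic part. You solve for each row of \(u\) directly against the southeast block and give an explicit necessity argument via \(\Delta_k(A)=\prod_{j\ge k}b_{jj}\), which the paper only indicates through its Schur-complement remark; your deferred entrywise check also goes through, since \((J^{-1}XJ)_{ij}=(-1)^{i-j}X_{N-i,N-j}\) here.
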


\begin{proof}
This is the usual LU criterion applied from the lower-right corner.
Proceed by descending induction on the row index \(k=N,N-1,\dots,1\).
At step \(k\), after rows \(k+1,\dots,N\) have been reduced, the Schur complement of the southeast block with rows and columns \(\{k,\dots,N\}\) is the scalar
\[
\frac{\Delta_k(A)}{\Delta_{k+1}(A)}.
\]
This quantity is the coefficient needed to eliminate the entries in row \(k\) above the diagonal; it is well defined and nonzero precisely when \(\Delta_k(A)\neq 0\).
The row operation that clears column \(k\) is uniquely determined, and the resulting matrix has the required lower triangular form.
Induction yields the unique factor \(u\in U_+\) with \(uA=b\in B_-\).

For the symplectic part, suppose \(A\in Sp(V,J)\), i.e.\ \(A^tJA=J\).
The involution \(\iota\colon g\mapsto J^{-1}(g^{-1})^tJ\) satisfies \(\iota(U_+)=U_+\) and \(\iota(B_-)=B_-\), and it fixes \(A\) because \(A^tJA=J\) implies \(J^{-1}(A^{-1})^tJ=A\).
Applying \(\iota\) to the factorization \(A=u^{-1}b\) gives \(A=\iota(u)^{-1}\iota(b)\).
Since \(\iota(u)\in U_+\) and \(\iota(b)\in B_-\), uniqueness of the Gauss factorization forces \(\iota(u)=u\) and \(\iota(b)=b\), i.e.\ \(u,b\in Sp(V,J)\).
\end{proof}

We use Jacobi's complementary minor identity; see \cite[Chapter I, \S4]{Gantmacher}.
\begin{lemma}\label{lem:jacobi-complementary-minor}
Let \(A\in GL_{N+1}(\CC)\), with rows and columns indexed by \(0,\ldots,N\). Let \(I,J\subset\{0,\ldots,N\}\) have the same cardinality, and write \(I^c,J^c\) for their complements, each in increasing order. Then
\[
\det A_{I,J}
=
(-1)^{\sum_{i\in I}i+\sum_{j\in J}j}
\det(A)\,
\det (A^{-1})_{J^c,I^c}.
\]
\end{lemma}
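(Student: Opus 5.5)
We argue by reducing to a block-matrix identity after moving the rows in \(I\) and the columns in \(J\) to the top-left corner. Let \(k=|I|=|J|\). Let \(P_I\) be the permutation matrix that reorders the rows of \(A\) so that the rows indexed by \(I\), in increasing order, come first, followed by those in \(I^c\), also in increasing order. Define \(P_J\) the same way for the columns indexed by \(J\). Put
\[
A'=P_IAP_J^{-1}=\begin{pmatrix}A_{I,J}&A_{I,J^c}\\ A_{I^c,J}&A_{I^c,J^c}\end{pmatrix}.
\]
Then
\[
(A')^{-1}=P_JA^{-1}P_I^{-1},
\]
whose rows are ordered \(J\) then \(J^c\) and whose columns are ordered \(I\) then \(I^c\). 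Hence its lower-right \((N+1-k)\times(N+1-k)\) block is exactly \((A^{-1})_{J^c,I^c}\).

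The key step is the standard block computation. Write
\[
(A')^{-1}=\begin{pmatrix}B_{11}&B_{12}\\ B_{21}&B_{22}\end{pmatrix},
\qquad B_{22}=(A^{-1})_{J^c,I^c}.
\]
Since \(A'(A')^{-1}=\mathrm{Id}\), the right block column gives \(A_{I,J}B_{12}+A_{I,J^c}B_{22}=0\) and \(A_{I^c,J}B_{12}+A_{I^c,J^c}B_{22}=\mathrm{Id}\). It follows that
\[
A'\begin{pmatrix}\mathrm{Id}_k&B_{12}\\ 0&B_{22}\end{pmatrix}
=\begin{pmatrix}A_{I,J}&0\\ A_{I^c,J}&\mathrm{Id}\end{pmatrix}.
\]
Taking determinants of both sides yields
\[
\det(A')\,\det B_{22}=\det A_{I,J}.
\]

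It remains to compute the sign, which is the only point requiring care. We have \(\det A'=\det P_I\,\det P_J^{-1}\,\det A\). The permutation moving the increasing sequence \(I=\{i_1<\dots<i_k\}\) to positions \(0,\dots,k-1\) is a product of \(\sum_{m}(i_m-(m-1))=\sum_{i\in I}i-\binom{k}{2}\) adjacent transpositions. Therefore
\[
\det P_I=(-1)^{\sum_{i\in I} i-\binom k2},
\]
and likewise \(\det P_J=(-1)^{\sum_{j\in J} j-\binom k2}\). The two factors \((-1)^{\binom k2}\) cancel, so
\[
\det A'=(-1)^{\sum_{i\in I}i+\sum_{j\in J}j}\det A.
\]
Substituting this into the previous display gives the stated identity, since \((-1)^{\sum i+\sum j}\) is its own inverse.

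The degenerate cases \(k=0\) and \(k=N+1\) reduce, respectively, to \(\det A\det A^{-1}=1\) and to the trivial statement \(\det A=\det A\), under the convention that the empty determinant equals \(1\).
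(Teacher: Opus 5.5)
Your argument is correct. The paper does not prove this lemma; it quotes it as Jacobi's complementary minor identity with a reference to Gantmacher (Chapter I, \S4). Your proposal therefore supplies a self-contained proof where the paper relies on the literature.

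Your route is the standard short proof. You permute the rows in $I$ and the columns in $J$ to the front. You then read off $\det(A')\det B_{22}=\det A_{I,J}$ from the block-triangular identity $A'\begin{pmatrix}\mathrm{Id}_k&B_{12}\\0&B_{22}\end{pmatrix}=\begin{pmatrix}A_{I,J}&0\\A_{I^c,J}&\mathrm{Id}\end{pmatrix}$. This needs neither Laplace expansion nor Cauchy--Binet.

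The main added value for this paper is your explicit sign count $\det P_I=(-1)^{\sum_{i\in I}i-\binom k2}$, done directly with indices starting at $0$. It confirms that the paper's $0$-indexed statement has the same sign as the usual $1$-indexed one: shifting all indices by one changes $\sum_{i\in I}i+\sum_{j\in J}j$ by $2k$. The later computation $C_k=(-1)^{N-k}\det(A^{-1})_{J^c,I^c}$ in the proof of the boundary theorem relies on exactly this sign.
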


\begin{lemma}\label{lem:symplectic-inverse-last-column}
Let \(v\in U_+\cap Sp(V,J)\), and write its first row as
\[
q=e_0^tv=(q_0,q_1,\ldots,q_N).
\]
Then the last column of \(v^{-1}\) is obtained from the first row of \(v\) by reversing the order and alternating signs:
\[
(v^{-1})_{iN}=(-1)^{i+1}q_{N-i}=(-1)^{i-N}q_{N-i}
\qquad(0\le i\le N).
\]
\end{lemma}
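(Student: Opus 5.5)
The plan is to compute \(v^{-1}\) directly from the symplectic relation rather than by triangular elimination. Since \(v\in Sp(V,J)\) we have \(v^tJv=J\), hence
\[
v^{-1}=J^{-1}v^tJ=-Jv^tJ,
\]
using \(J^{-1}=-J\) as in the proof of Proposition~\ref{prop:M-matrix}; this holds because \(J^2=-I\) when \(N=2n-1\) is odd. The lemma then reduces to reading off one column of a product of a transpose with two signed antidiagonal matrices. The unipotence of \(v\) plays no role beyond guaranteeing that \(v\) is invertible.

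Next I would expand the \((i,N)\) entry:
\[
(v^{-1})_{iN}=-\sum_{a,b}J_{ia}\,(v^t)_{ab}\,J_{bN}.
\]
By the definition of \(J\), the entry \(J_{ia}\) is nonzero only when \(a=N-i\), where it equals \((-1)^i\). Likewise \(J_{bN}\) is nonzero only when \(b=0\), where it equals \((-1)^0=1\). The sum therefore collapses to a single term:
\[
(v^{-1})_{iN}=-(-1)^i\,(v^t)_{N-i,0}=-(-1)^i\,v_{0,N-i}=(-1)^{i+1}q_{N-i},
\]
since \(v_{0,N-i}\) is the \((N-i)\)-th entry of the first row \(q=e_0^tv\).

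Finally, because \(N\) is odd, \((-1)^{-N}=-1\), so \((-1)^{i+1}=(-1)^{i-N}\), which gives the second form of the statement. As a consistency check, the case \(i=N\) gives \((v^{-1})_{NN}=q_0=1\), matching the unipotence of \(v^{-1}\).

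There is no real obstacle here. The only point needing care is keeping the sign conventions straight: the signs \((-1)^i\) in \(J\), the overall minus sign from \(J^{-1}=-J\), and the row-vector convention, under which \(e_0^tv\) is the first row of \(v\).
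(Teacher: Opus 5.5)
Your proposal is correct and is essentially the paper's proof: both obtain \(v^{-1}=-Jv^tJ\) from \(v^tJv=J\) and \(J^2=-I\). The paper then reads off the whole last column at once via \(Je_N=e_0\) and \(v^te_0=q^t\), giving \(v^{-1}e_N=-Jq^t\), which is the same as your entrywise collapse of the double sum; note also that \(v\in Sp(V,J)\) already makes \(v\) invertible, so unipotence enters only through \(q_0=1\) in your consistency check, not the formula itself.
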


\begin{proof}
Since \(v\in Sp(V,J)\), we have \(v^tJv=J\). Hence
\[
v^{-1}=J^{-1}v^tJ=-Jv^tJ,
\]
because \(J^2=-I\) for \(N=2n-1\). Now \(Je_N=e_0\), so the last column of \(v^{-1}\) is
\[
v^{-1}e_N=-Jv^tJe_N=-Jv^te_0.
\]
But \(v^te_0\) is the transpose of the first row of \(v\), namely \(q^t\). Therefore
\[
v^{-1}e_N=-Jq^t.
\]
Taking the \(i\)-th entry and using \(J_{i,N-i}=(-1)^i\), we get
\[
(v^{-1})_{iN}
=
-J_{i,N-i}q_{N-i}
=
-(-1)^iq_{N-i}
=
(-1)^{i+1}q_{N-i}.
\]
Since \(N\) is odd, \((-1)^{i+1}=(-1)^{i-N}\), giving the displayed formula. The unipotent condition gives in particular \(q_0=1\) and the upper-triangular shape used below.
\end{proof}

\begin{proof}[Proof of Theorem~\ref{thm:richardson-boundary}]
By definition, \(\eta:S\longrightarrow \PP^N,b\longmapsto [e_N^tb\dot w_0]\). Since \(S=B_-\cap U_+MU_+\), every \(b\in S\) can be written as \(b=uMv\) for some \(u,v\in U_+\cap Sp(V,J)\). Let \(q=(q_0,\ldots,q_N):=e_0^tv\) be the first row of \(v\). Since \(v\in U_+\), one has \(q_0=1\). The last row of \(u\) is \(e_N^t\), and the last row of \(M\) is \(e_0^t\). Therefore
\[
e_N^tb=e_N^tuMv=e_N^tMv=e_0^tv=q.
\]
The homogeneous coordinates of \(\eta(b)\) are \(p=qJ\), namely
\[
p_i=(qJ)_i=(-1)^{N-i}q_{N-i}.
\]
Thus \(p_i=(-1)^{N-i}q_{N-i}\). Since \(N=2n-1\) is odd,
\[
p_N=q_0=1,
\qquad
p_0=-q_N.
\]
Moreover, substituting \(p_i=(-1)^{N-i}q_{N-i}\) gives
\[
F_r(p)=-F_r(q)
\qquad
(1\le r\le n-1).
\]
Thus the stated open set in \(p\)-coordinates is the image, under \(q\mapsto [qJ]\), of the \(q\)-coordinate condition
\[
q_0=1,\qquad F_0(q)F_1(q)\cdots F_{n-1}(q)\ne0.
\]
It remains to compare this condition with the Gauss factorization condition for \(Mv\).

For an invertible matrix \(A\), write
\[
\Delta_k(A)=
\det A_{\{k,k+1,\ldots,N\},\{k,k+1,\ldots,N\}},
\qquad
1\le k\le N.
\]
By Lemma~\ref{lem:southeast-elimination}, \(A\) admits a unique Gauss factorization \(A=u^{-1}b\) with \(u\in U_+\) and \(b\in B_-\) if and only if
\[
\Delta_k(A)\ne0\qquad(1\le k\le N).
\]
Now take \(A=Mv\). The rows of \(A\) are
\[ A_0=-v_N,
\quad
A_i=-v_i\quad(1\le i\le N-1),
\quad
A_N=v_0,
\]
where \(v_i\) is the \(i\)-th row of \(v\). Since \(v\in U_+\), all diagonal entries of \(v\) are \(1\) and all entries below the diagonal are \(0\). Thus \(A=Mv\) has the corresponding form. For example, when \(N=5\),
\[
A=
\begin{pmatrix}
0&0&0&0&0&-1\\
0&-1&*&*&*&*\\
0&0&-1&*&*&*\\
0&0&0&-1&*&*\\
0&0&0&0&-1&*\\
1&q_1&q_2&q_3&q_4&q_5
\end{pmatrix}.
\]
For \(1\le k\le N-1\), expand \(\Delta_k(A)\) along its first column, i.e. the global column \(k\). In that column the only possible nonzero entries are \(-1\) in row \(k\) and \(q_k\) in row \(N\). Hence
\[
\Delta_k(A)
=
-\Delta_{k+1}(A)
+(-1)^{N-k}q_kC_k,
\]
where
\[
C_k=
\det A_{\{k,\ldots,N-1\},\{k+1,\ldots,N\}}.
\]
To compute \(C_k\), apply Lemma~\ref{lem:jacobi-complementary-minor} to the matrix \(A\). Put
\[
I=\{k,\ldots,N-1\},
\qquad
J=\{k+1,\ldots,N\}.
\]
Then
\[
I^c=\{0,\ldots,k-1,N\},
\qquad
J^c=\{0,\ldots,k\}.
\]
Since \(A\in Sp(V,J)\), \(\det(A)=1\), and the sign in Jacobi's identity is \((-1)^{N-k}\). Thus
\[
C_k
=
(-1)^{N-k}
\det(A^{-1})_{J^c,I^c}.
\]
Here the complementary row set in \(A\) is \(I^c=\{0,\ldots,k-1,N\}\), and the complementary column set is \(J^c=\{0,\ldots,k\}\); in Jacobi's formula they occur as rows \(J^c\) and columns \(I^c\) of \(A^{-1}\). Since \(A=Mv\), we have \(A^{-1}=v^{-1}M^{-1}\). The inverse of \(M\) is again a signed permutation matrix:
\[
(M^{-1})_{0N}=1,
\qquad
(M^{-1})_{N0}=-1,
\qquad
(M^{-1})_{ii}=-1\quad(1\le i\le N-1),
\]
with all other entries zero. Thus, when \(N=5\), the shapes of \(v^{-1}\) and \(M^{-1}\) are
\[
\begin{array}{c@{\qquad}c}
v^{-1}=
\begin{pmatrix}
1&*&*&*&*&-q_5\\
0&1&*&*&*&q_4\\
0&0&1&*&*&-q_3\\
0&0&0&1&*&q_2\\
0&0&0&0&1&-q_1\\
0&0&0&0&0&1
\end{pmatrix}
&
M^{-1}=
\begin{pmatrix}
0&0&0&0&0&1\\
0&-1&0&0&0&0\\
0&0&-1&0&0&0\\
0&0&0&-1&0&0\\
0&0&0&0&-1&0\\
-1&0&0&0&0&0
\end{pmatrix}.
\end{array}
\]
The displayed last column of \(v^{-1}\) is given by Lemma~\ref{lem:symplectic-inverse-last-column}.
Right multiplication by \(M^{-1}\) sends column \(0\) to minus the \(N\)-th column, sends column \(i\) to minus the \(i\)-th column for \(1\le i\le N-1\), and sends column \(N\) to the \(0\)-th column. Hence \(A^{-1}=v^{-1}M^{-1}\) has the following form when \(N=5\):
\[
A^{-1}=
\begin{pmatrix}
q_5&*&*&*&*&1\\
-q_4&-1&*&*&*&0\\
q_3&0&-1&*&*&0\\
-q_2&0&0&-1&*&0\\
q_1&0&0&0&-1&0\\
-1&0&0&0&0&0
\end{pmatrix}.
\]
For the required minor, this shape shows that the matrix is triangular except for the column coming from the \(N\)-th column of \(v^{-1}\):
\[
\det(A^{-1})_{J^c,I^c}=q_{N-k}.
\]
Indeed, the column set \(I^c\) consists of the columns \(0,\ldots,k-1,N\); after multiplying by \(M^{-1}\), these are \(-\)the \(N,1,\ldots,k-1\) columns of \(v^{-1}\), followed by the \(0\)-th column of \(v^{-1}\). The \(0\)-th column is \(e_0\), the columns \(1,\ldots,k-1\) form a unit triangular block, and the remaining entry is
\[
(v^{-1})_{kN}=(-1)^{k-N}q_{N-k};
\]
the triangular expansion gives
\[
\det(A^{-1})_{J^c,I^c}
=(-1)^{k+1}(v^{-1})_{kN}
=q_{N-k},
\]
because \(N\) is odd. Hence
\[
C_k=(-1)^{N-k}q_{N-k},
\]
and therefore
\[
\Delta_k(A)+\Delta_{k+1}(A)=q_kq_{N-k}
\qquad(1\le k\le N-1).
\]

Now \(\Delta_N(A)=A_{NN}=q_N=F_0(q)\). Starting from this value and applying the recurrence backwards gives
\[
\Delta_{N-s}(A)=H_s(q)
\qquad(0\le s\le N-1),
\]
where
\[
H_s(q)=\sum_{j=0}^s(-1)^jq_{s-j}q_{N-s+j}.
\]
For \(0\le s\le n-1\), this is \(F_s(q)\). For \(s\ge n\), the middle terms in the alternating sum cancel in pairs because \(N=2n-1\) is odd, and \(H_s(q)=F_{N-1-s}(q)\). Thus every southeast principal minor \(\Delta_k(A)=\Delta_k(Mv)\), \(1\le k\le N\), is one of \(F_0(q),\ldots,F_{n-1}(q)\), and each of these \(F_r(q)\)'s occurs. Therefore all southeast principal minors of \(Mv\) are nonzero if and only if \(F_0(q),F_1(q),\ldots,F_{n-1}(q)\) are nonzero.

Conversely, take any row vector \(q=(1,q_1,\ldots,q_N)\) satisfying
\[
F_0(q)F_1(q)\cdots F_{n-1}(q)\ne0.
\]
It remains to realize \(q\) as the first row of an element of \(U_+\cap Sp(V,J)\). Write \(a=(q_1,\ldots,q_{N-1})\) and \(z=q_N\). With respect to the decomposition
\[
V=\CC e_0\oplus \operatorname{Span}(e_1,\ldots,e_{N-1})\oplus \CC e_N,
\]
write
\[
J=\begin{pmatrix}
0&0&1\\
0&J'&0\\
-1&0&0
\end{pmatrix}.
\]
Set
\[
v(q)=
\begin{pmatrix}
1&a&z\\
0&I_{N-1}&J'a^t\\
0&0&1
\end{pmatrix}.
\]
Using \(J'^t=-J'\) and \(J'J'=-I\) for the middle symplectic block, block multiplication gives
\[
v(q)^tJv(q)=
\begin{pmatrix}
0&0&1\\
0&J'&0\\
-1&0&0
\end{pmatrix}
=J.
\]
Thus \(v(q)\in U_+\cap Sp(V,J)\), and its first row is \(q\). Since \(F_0(q),F_1(q),\ldots,F_{n-1}(q)\) are nonzero, all southeast principal minors of \(Mv(q)\) are nonzero. Gaussian elimination gives \(u\in U_+\cap Sp(V,J)\) with
\[
uMv(q)\in B_-.
\]
Then \(b=uMv(q)\in S\), and \(e_N^tb\dot w_0=qJ\). Hence every row vector \(q=(1,q_1,\ldots,q_N)\) satisfying \(F_0(q)F_1(q)\cdots F_{n-1}(q)\ne0\) occurs. By the coordinate identification \(p=qJ\) at the beginning of the proof, this is exactly the reverse inclusion in the stated \(p\)-coordinates.
\end{proof}

\begin{rmk}\label{rmk:square-free-boundary}
The polynomial \(H_N=p_0p_N\prod_{r=1}^{n-1}F_r\) is square-free. Indeed, \(p_0\) and \(p_N\) are distinct linear forms. For \(r\ge1\), the symmetric matrix of the quadratic form \(F_r\) has rank \(2(r+1)\ge4\), so \(F_r\) is irreducible over \(\CC\). Moreover, \(F_r\) is divisible by neither \(p_0\) nor \(p_N\), since it contains the monomial \(p_rp_{N-r}\). Finally, if \(s<r\), the monomial \(p_rp_{N-r}\) occurs in \(F_r\) but not in \(F_s\), so \(F_r\) and \(F_s\) are not proportional. Thus all irreducible factors of \(H_N\) are distinct, and \(D_C\) is reduced.
\end{rmk}

\subsection{The dual odd quadric and the Lusztig torus}
We first fix the notation for the dual odd quadric and for the coordinates used in the Rietsch mirror. Specialize to
\[
G=Sp_{2n}(\CC),
\qquad
P=P_1,
\]
so that \(G/P_1\simeq \PP^{2n-1}\). The Langlands dual group is
\[
G^\vee=SO_{2n+1}(\CC),
\]
Landau--Ginzburg models for odd-dimensional quadrics and their comparison were studied by Pech and Rietsch~\cite{PR}.
The compactification of a Rietsch mirror fibre is
\[
G^\vee/P_1^\vee
=
\operatorname{OG}(1,2n+1)
\cong Q^{2n-1}\subset \PP^{2n},
\]
the smooth odd-dimensional quadric parametrizing isotropic lines in the standard representation of \(SO_{2n+1}\).
The Lie-theoretical Landau--Ginzburg fibre is identified with the open Richardson variety
\[
\mathcal R_{w_P,w_0}^\vee
=
(B_+^\vee\dot w_PB_-^\vee \cap B_-^\vee\dot w_0B_-^\vee)/B_-^\vee
\subset G^\vee/B_-^\vee .
\]
Under the natural projection \(\pi\colon G^\vee/B_-^\vee\to G^\vee/P_1^\vee\), the dual-side Rietsch boundary is
\[
D_{\mathrm{Lie}}^\vee
:=
G^\vee/P_1^\vee\setminus \pi(\mathcal R_{w_P,w_0}^\vee).
\]
The divisor \(D_C\) constructed above is instead the type \(C_n\) boundary on the original projective space.

Let \(N'=2n+1\). We write the standard basis of \(\CC^{N'}\) as \(e_1,\ldots,e_{N'}\), and take the symmetric form with matrix
\[
J^\vee_{i,N'+1-i}=(-1)^{i-1},
\qquad
J^\vee_{ij}=0\quad\text{if }i+j\ne N'+1.
\]
Thus
\[
SO_{2n+1}(\CC)=\{g\in SL_{N'}(\CC):g^tJ^\vee g=J^\vee\}.
\]
Choose Chevalley generators in \(\mathfrak{so}_{2n+1}\) by
\[
e_i^\vee=E_{i,i+1}+E_{N'-i,N'+1-i}
\quad(1\le i\le n-1),
\]
and
\[
e_n^\vee=\sqrt2(E_{n,n+1}+E_{n+1,n+2}).
\]
Set \(x_i(s)=\exp(se_i^\vee)\). For \(i<n\), this is simply \(I+se_i^\vee\), while
\[
x_n(s)=I+\sqrt2s(E_{n,n+1}+E_{n+1,n+2})+s^2E_{n,n+2}.
\]
Let \(\alpha_1^\vee,\ldots,\alpha_n^\vee\) be the simple roots of \((G^\vee,T^\vee,B_+^\vee)\), so that
\[
\mathfrak g^\vee_{\alpha_i^\vee}=\CC e_i^\vee .
\]
With our choice of \(J^\vee\), the diagonal Cartan subalgebra may be written as
\[
\mathfrak h^\vee
=
\left\{
\operatorname{diag}(a_1,\ldots,a_n,0,-a_n,\ldots,-a_1)
\right\}.
\]
In particular,
\[
\rho^\vee
=
\operatorname{diag}(n,n-1,\ldots,1,0,-1,\ldots,-n),
\]
so that \(\alpha_i^\vee(\rho^\vee)=1\) for every simple root and hence
\[
[\rho^\vee,e_i^\vee]=e_i^\vee .
\]
For each \(i\), let \((e_i^\vee)^*\in(\mathfrak g^\vee)^*\) be the linear functional which is \(1\) on \(e_i^\vee\) and vanishes on \(\mathfrak h^\vee\) and on all root spaces other than \(\mathfrak g^\vee_{\alpha_i^\vee}\). For \(u\in U_+^\vee\), define the simple-root coordinate
\[
\varepsilon_i(u)=(e_i^\vee)^*(\operatorname{Ad}_{u^{-1}}\rho^\vee).
\]
Let \(u=I+N\in U_+^\vee\). Since \(U_+^\vee\) is filtered by root height, we have
\[
\operatorname{Ad}_{u^{-1}}\rho^\vee
=
u^{-1}\rho^\vee u
=
\rho^\vee+[\rho^\vee,N]+\text{terms of root height at least }2.
\]
Therefore the coefficient of \(e_i^\vee\) in \(\operatorname{Ad}_{u^{-1}}\rho^\vee\) is exactly the simple-root coefficient of \(u\). In matrix entries this gives
\[
\varepsilon_i(u)=u_{i,i+1}=u_{N'-i,N'+1-i}\quad(1\le i<n),
\]
and, for the short simple root,
\[
\varepsilon_n(u)=\frac{1}{\sqrt2}u_{n,n+1}
=
\frac{1}{\sqrt2}u_{n+1,n+2}.
\]
The parabolic Weyl subgroup is generated by \(s_2,\ldots,s_n\). With the standard representatives one may take
\[
t(q)=\operatorname{diag}(q,1,\ldots,1,q^{-1})\in (T^\vee)^{W_P}
\]
and \(M^\vee=\dot w_P\dot w_0^{-1}\) to be the signed permutation matrix whose nonzero entries are
\[
M^\vee_{1,N'}=1,
\qquad
M^\vee_{i,i}=-1\quad(2\le i\le N'-1),
\qquad
M^\vee_{N',1}=1.
\]

\begin{exam}\label{ex:dual-n2-matrices}
For \(n=2\) we have \(N'=5\), and \(G^\vee/P_1^\vee=SO_5/P_1^\vee\cong Q^3\subset\PP^4\). The symmetric form is
\[
J^\vee=
\begin{pmatrix}
0&0&0&0&1\\
0&0&0&-1&0\\
0&0&1&0&0\\
0&-1&0&0&0\\
1&0&0&0&0
\end{pmatrix}.
\]
For the parabolic subgroup \(W_P=\langle s_2\rangle\), the standard representative of \(w_P=s_2\) may be taken as
\[
\dot w_P=
\begin{pmatrix}
1&0&0&0&0\\
0&0&0&1&0\\
0&0&-1&0&0\\
0&1&0&0&0\\
0&0&0&0&1
\end{pmatrix}.
\]
Since \((J^\vee)^{-1}=J^\vee\), we obtain
\[
M^\vee=\dot w_P(J^\vee)^{-1}
=
\begin{pmatrix}
0&0&0&0&1\\
0&-1&0&0&0\\
0&0&-1&0&0\\
0&0&0&-1&0\\
1&0&0&0&0
\end{pmatrix},
\]
which is the specialization of the displayed formula for \(M^\vee\).
\end{exam}

With these definitions, the Rietsch fibre over the parameter \(q\in\CC^*\) is
\[
Z_{P_1}^{q}
=
B_-^\vee\cap U_+^\vee t(q)M^\vee U_+^\vee .
\]
By the definition of this domain, every point of the fibre can be written in the form
\[
b=u_1t(q)M^\vee u_2,
\qquad
u_1,u_2\in U_+^\vee,
\qquad
b\in B_-^\vee .
\]
This is the polynomial sign convention used below. It is related to Rietsch's original convention as follows: her factorization is
\[
b=u_1^{R}t(q)M^\vee (u_2^{R})^{-1},
\qquad
u_1^{R},u_2^{R}\in U_+^\vee,
\]
and her potential, in terms of the simple-root coordinates defined above, is
\[
\mathcal F_P=
\sum_{i=1}^n\varepsilon_i(u_2^{R})
-
\sum_{i=1}^n\varepsilon_i(u_1^{R}).
\]
In the explicit chart we set \(u_1=u_1^{R}\) and \(u_2=(u_2^{R})^{-1}\). Since inversion changes the simple-root coordinates in \(U_+^\vee\) by a sign, the potential that we compute is
\[
W_{\mathrm{Lie}}=-\mathcal F_P
=
\sum_{i=1}^n\varepsilon_i(u_1)+\sum_{i=1}^n\varepsilon_i(u_2).
\]
It remains to compute the two simple-root-coordinate sums for \(u_1\) and \(u_2\).

Choose \(u_2\) from a Lusztig torus. If \(w=s_{i_1}\cdots s_{i_\ell}\) is a reduced expression in the Weyl group, then the multiplication map
\[
\theta_{\mathbf i}\colon(\CC^*)^\ell\longrightarrow U_+^\vee,
\qquad
(z_1,\ldots,z_\ell)\longmapsto x_{i_1}(z_1)\cdots x_{i_\ell}(z_\ell)
\]
parametrizes a Zariski-open torus in the unipotent Bruhat cell
\[
U_+^\vee(w):=U_+^\vee\cap B_-^\vee\dot wB_-^\vee .
\]
This is the Lusztig torus associated with the reduced word \(\mathbf i\) \cite{Lusztig,MR}. In the present parabolic case, the relevant Weyl group element is \(w_Pw_0=w_Pw_0^{-1}\), and it has the reduced word
\[
\mathbf i=(1,2,\ldots,n-1,n,n-1,\ldots,1)
\]
of length \(2n-1=\dim(G^\vee/P_1^\vee)\). We use the corresponding Lusztig coordinates and set
\[
u_2=
 x_1(a_1)x_2(a_2)\cdots x_{n-1}(a_{n-1})x_n(b)
 x_{n-1}(c_{n-1})\cdots x_1(c_1),
\]
where
\[
(a_1,\ldots,a_{n-1},b,c_{n-1},\ldots,c_1)\in(\CC^*)^{2n-1}.
\]
For this choice of \(u_2\), the factor \(u_1\) is determined by imposing the defining condition \(b\in Z_{P_1}^q\), namely by solving the triangular factorization problem
\[
u_1t(q)M^\vee u_2\in B_-^\vee.
\]
For example, when \(n=2\), so that \(N'=5\), the reduced word is \((1,2,1)\). Writing \(a=a_1\) and \(c=c_1\), the Lusztig product gives
\[
u_2=x_1(a)x_2(b)x_1(c)
=
\begin{pmatrix}
1&a+c&\sqrt2ab&ab^2&ab^2c\\
0&1&\sqrt2b&b^2&b^2c\\
0&0&1&\sqrt2b&\sqrt2bc\\
0&0&0&1&a+c\\
0&0&0&0&1
\end{pmatrix}.
\]

For the computations below, put
\[
A_r=a_1a_2\cdots a_r,
\qquad
A_0=1,
\]
\[
C_r=c_rc_{r+1}\cdots c_{n-1},
\qquad
C_n=1,
\]
\[
D=A_{n-1}b^2C_1
=a_1\cdots a_{n-1}b^2c_1\cdots c_{n-1},
\qquad
s_r=a_r+c_r.
\]

\begin{lemma}\label{lem:u2-contribution}
For the above Lusztig product,
\[
\varepsilon_i(u_2)=a_i+c_i\quad(1\le i\le n-1),
\qquad
\varepsilon_n(u_2)=b.
\]
Moreover, for \(2\le i\le N'-1\) and \(j>i\),
\begin{equation}\label{eq:u2-entry-relation}
(u_2)_{i,j}=\lambda_i (u_2)_{1,j}
\end{equation}
holds, where
\[
\lambda_i=
\begin{cases}
\displaystyle \frac{1}{A_{i-1}}, & 2\le i\le n,\\[1.1em]
\displaystyle \frac{\sqrt2}{A_{n-1}b}, & i=n+1,\\[1.1em]
\displaystyle \frac{s_{N'-i}}{A_{n-1}b^2C_{N'-i}}, & n+2\le i\le 2n.
\end{cases}
\]
\end{lemma}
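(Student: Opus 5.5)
The first claim follows from the height filtration. By the discussion preceding the lemma, \(\varepsilon_i(u)\) is the simple-root coefficient of \(u\), read off from a superdiagonal entry. For a product of one-parameter subgroups \(x_{i_1}(z_1)\cdots x_{i_\ell}(z_\ell)\), the height-one part is additive: modulo root height at least \(2\),
\[
\log u \equiv \sum_k z_k e_{i_k}^\vee .
\]
Hence
\[
\varepsilon_i(u_2)=\sum_{k:\,i_k=i} z_k .
\]
In the word \((1,\dots,n-1,n,n-1,\dots,1)\), each \(i<n\) occurs twice, with parameters \(a_i\) and \(c_i\), and \(n\) occurs once, with parameter \(b\). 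This gives \(\varepsilon_i(u_2)=a_i+c_i\) and \(\varepsilon_n(u_2)=b\). Equivalently, one can check \((u_2)_{i,i+1}=a_i+c_i\) and \((u_2)_{n,n+1}=\sqrt2 b\) directly, consistent with the \(n=2\) matrix displayed above.

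For the row relation \eqref{eq:u2-entry-relation}, I would compute \(u_2\) explicitly by a path (chip) description. Write \(u_2=L\,x_n(b)\,R\), where
\[
L=x_1(a_1)\cdots x_{n-1}(a_{n-1}),\qquad R=x_{n-1}(c_{n-1})\cdots x_1(c_1).
\]
Each factor \(x_i(s)\) with \(i<n\) is \(I+s(E_{i,i+1}+E_{N'-i,N'+1-i})\). Multiplying in order:

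\begin{itemize}
\item Row \(i\le n\) of \(L\) is \(e_i+a_ie_{i+1}+a_ia_{i+1}e_{i+2}+\cdots+(a_i\cdots a_{n-1})e_n\), on the indices \(\le n\).
\item The lower block of \(L\) acts trivially on rows \(\le n\), because the \(E_{N'-i,\cdot}\) parts only connect indices \(\ge n+2\). Its effect on rows \(\ge n+2\) is a single step \(E_{N'-i,N'+1-i}\) per row, with no chaining in the increasing order.
\end{itemize}

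Then \(x_n(b)\) moves index \(n\) to \(n+1\) (weight \(\sqrt2 b\)) and to \(n+2\) (weight \(b^2\)), and \(n+1\) to \(n+2\) (weight \(\sqrt2 b\)). Finally \(R\) chains upward in the lower block, taking \(n+2\to n+3\to\cdots\) with weights \(c_{n-1},c_{n-2},\dots\); this produces the products \(C_r\). In the upper block, \(R\) adds one step \(E_{i,i+1}\) with weight \(c_i\).

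Rather than tracking every entry, I would prove the relation as follows.

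\textbf{Key observation.} Let \(j>i\ge2\). Every path from row \(1\) to column \(j\) that goes through the factors in order must pass through row \(i\)'s state at the appropriate point, and the part of the path before reaching \(i\) contributes a fixed weight. That fixed weight is \(1/\lambda_i\).

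\textbf{Case \(2\le i\le n\).} Starting at \(1\), the only way to leave index \(1\) with \(j>i\) via the \(L\) part is to pass through \(2,\dots,i\). The \(c\) steps in the upper block move only by one and come after \(x_n\). The prefix weight is therefore \(A_{i-1}\), giving \(\lambda_i=1/A_{i-1}\).

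\textbf{Cases \(i=n+1\) and \(i\ge n+2\).} These use the middle step \(\sqrt2 b\) for \(i=n+1\). For \(i\ge n+2\), there are two routes into the lower block: a \(b^2\) jump followed by \(R\)-chaining, or the two-step route through \(n+1\). The factor \(s_{N'-i}=a_{N'-i}+c_{N'-i}\) appears because row \(i\)'s own first step can come either from the lower part of \(L\) (weight \(a\)) or of \(R\) (weight \(c\)). I would verify this structure against the \(n=2\) matrix: there rows \(4\) and \(1\) in column \(5\) give \(a+c\) versus \(ab^2c\cdot\) ratio \(s_1/(ab^2c)\), matching the formula with \(C_1=c\).

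The main obstacle is the lower block, \(i\ge n+2\). The summation \(a_{N'-i}+c_{N'-i}\) must factor uniformly across all \(j>i\), and this needs care with which \(x_k\) act on which rows. If the path bookkeeping gets messy, I would fall back on a descending induction on \(n\). Write
\[
u_2=x_1(a_1)\,u_2'\,x_1(c_1),
\]
where \(u_2'\) is the analogous product for the \(C_{n-1}\)-type word embedded in indices \(2,\dots,N'-1\). Apply the lemma to \(u_2'\) inductively, then track how conjugation by \(x_1(a_1)\) and right multiplication by \(x_1(c_1)\) modify rows \(1\), \(2\) and \(N'-1\). The new \(\lambda\)'s then acquire exactly the extra factors \(a_1\), \(c_1\) and \(s_1\).
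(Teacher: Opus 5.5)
Your first part is fine: the height-filtration argument is equivalent to the paper's remark that first-superdiagonal entries of a product of unitriangular matrices add. Your prefix argument also correctly gives \(\lambda_i=1/A_{i-1}\) for \(2\le i\le n\).

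The gap is that your key observation is false for \(i\ge n+1\), and your remarks for those cases do not replace it.

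For \(i=n+1\), no path from row \(1\) to a column \(j\ge n+2\) visits the index \(n+1\). Such a path leaves \(n\) through the single entry \((x_n(b))_{n,n+2}=b^2\), and no later factor moves \(n+1\). Your ``two-step route through \(n+1\)'' is a double count, because \(b^2E_{n,n+2}=\tfrac12(\sqrt2 b)^2E_{n,n+1}E_{n+1,n+2}\) already is the whole contribution of \(x_n(b)\). If you refine \(x_n(b)\) in that way, the prefix weight \(\sqrt2A_{n-1}b\) into \(n+1\) would give \(\lambda_{n+1}=1/(\sqrt2A_{n-1}b)\), which is off by a factor \(2\).

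For \(n+2\le i\le 2n\), row-\(1\) paths do pass through \(i\), but only inside \(R\), after \(L\) is finished. So their continuation from \(i\) sees only the \(c_\ell\)-step (\(\ell=N'-i\)), never the \(a_\ell\)-step that row \(i\) can take inside \(L\). The principle would predict \(\lambda_i=1/(A_{n-1}b^2C_{\ell+1})\) instead of \(s_\ell/(A_{n-1}b^2C_\ell\)). For \(n=2\), \(i=4\), it predicts \(1/(ab^2)\), while the matrix gives \((a+c)/(ab^2c)\), the very entry you use as a sanity check.

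The missing idea, which is how the paper argues, is to compare row \(1\) and row \(i\) at a common \emph{later} index, reached at the same stage of the product, after which the tails coincide. The ratio of the prefix weights is then \(\lambda_i\).
For \(i=n+1\), both rows reach \(n+2\) at the factor \(x_n(b)\), with weights \(A_{n-1}b^2\) and \(\sqrt2 b\), and then share the \(R\)-chain.
For \(n+2\le i\le 2n\), both rows are at \(i+1\) once the factor \(x_\ell(c_\ell)\) of \(R\) has acted. The weights are \(A_{n-1}b^2C_\ell\) and \(a_\ell+c_\ell=s_\ell\) (the \(a_\ell\)-step in \(L\) followed by idling, or the \(c_\ell\)-step in \(R\)). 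After that they share the steps of \(x_{\ell-1}(c_{\ell-1}),\dots,x_1(c_1)\).

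Your fallback induction through \(u_2=x_1(a_1)u_2'x_1(c_1)\) is a genuinely different route that would also close the gap, but it is only sketched, and the bookkeeping is misdescribed. It is not a conjugation. Left multiplication by \(x_1(a_1)\) changes rows \(1\) and \(N'-1\), and right multiplication by \(x_1(c_1)\) changes columns \(2\) and \(N'\). Concretely:
\((u_2)_{1,j}=a_1(u_2')_{2,j}\) for \(3\le j\le N'-1\);
\((u_2)_{i,N'}=c_1(u_2')_{i,N'-1}\) for \(2\le i\le N'-2\);
\((u_2)_{1,N'}=a_1c_1(u_2')_{2,N'-1}\);
\((u_2)_{N'-1,N'}=a_1+c_1\);
the entries with \(2\le i<j\le N'-1\) are those of \(u_2'\).
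These give \(\lambda_2=1/a_1\), \(\lambda_i=\lambda_i'/a_1\) for \(3\le i\le N'-2\) (with \(\lambda_i'\) the coefficients of \(u_2'\) relative to its row \(2\)), and \(\lambda_{N'-1}=s_1/D\). The base case \(n=2\) can be read off from the displayed matrix.

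One of these two arguments has to be carried out before the cases \(i\ge n+1\) count as proved.
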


\begin{proof}
For \(r<n\), we have
\[
x_r(z)=I+ze_r^\vee,
\]
and the only first-superdiagonal entries of \(ze_r^\vee\) are
\[
(ze_r^\vee)_{r,r+1}=z,
\qquad
(ze_r^\vee)_{N'-r,N'+1-r}=z.
\]
For \(r=n\), the formula for \(x_n(b)\) displayed above gives the first-superdiagonal entries
\[
(x_n(b)-I)_{n,n+1}=\sqrt2 b,
\qquad
(x_n(b)-I)_{n+1,n+2}=\sqrt2 b,
\]
and the additional term \(b^2E_{n,n+2}\) lies on the second superdiagonal.

In a product of upper unitriangular matrices, a term involving two or more strictly upper triangular factors cannot contribute to an entry \((j,j+1)\), because it moves at least two steps to the right. Therefore each first-superdiagonal entry of \(u_2\) is the sum of the corresponding first-superdiagonal entries of the individual factors in
\[
u_2=
 x_1(a_1)\cdots x_{n-1}(a_{n-1})x_n(b)
 x_{n-1}(c_{n-1})\cdots x_1(c_1).
\]
Hence, for \(1\le i\le n-1\),
\[
(u_2)_{i,i+1}=a_i+c_i,
\qquad
(u_2)_{N'-i,N'+1-i}=a_i+c_i,
\]
while
\[
(u_2)_{n,n+1}=\sqrt2 b,
\qquad
(u_2)_{n+1,n+2}=\sqrt2 b.
\]
The matrix formula for the simple-root coordinates gives
\[
\varepsilon_i(u_2)=a_i+c_i\quad(1\le i\le n-1),
\qquad
\varepsilon_n(u_2)=b,
\]
which proves the displayed formulas for the simple-root coordinates.

We prove \eqref{eq:u2-entry-relation}. Since each \(x_r(z)\) is upper unitriangular and has nonzero off-diagonal entries only along the adjacent root positions, except for the term \(b^2E_{n,n+2}\) in \(x_n(b)\), an entry \((u_2)_{i,j}\) with \(j>i\) is obtained by following the increasing chain of indices from \(i\) to \(j\) through the reduced word
\[
1,2,\ldots,n-1,n,n-1,\ldots,1.
\]
For \(2\le i\le n\), the path contributing to the first row reaches column \(i\) through the initial segment \(x_1(a_1)\cdots x_{i-1}(a_{i-1})\), which contributes \(A_{i-1}\). After that, the remaining tail from \(i\) to \(j\) is the same one that computes \((u_2)_{i,j}\). Hence
\[
(u_2)_{1,j}=A_{i-1}(u_2)_{i,j}
\qquad(2\le i\le n,\ j>i).
\]
For the middle row \(i=n+1\), the first row reaches column \(n+2\) through the \(b^2E_{n,n+2}\) term of \(x_n(b)\), after the factor \(A_{n-1}\), whereas row \(n+1\) reaches column \(n+2\) through the adjacent term \(\sqrt2 bE_{n+1,n+2}\). Thus
\[
\sqrt2\,(u_2)_{1,j}
=
A_{n-1}b\,(u_2)_{n+1,j}
\qquad(j>n+1).
\]
Finally, let \(n+2\le i\le 2n\) and put \(\ell=N'-i\). Then the first row reaches column \(i+1=N'+1-\ell\) with factor \(A_{n-1}b^2C_\ell\), while row \(i\) reaches the same column by the simple-root entry \(a_\ell+c_\ell=s_\ell\). The remaining tail is again common, so
\[
A_{n-1}b^2C_\ell\,(u_2)_{i,j}
=
s_\ell (u_2)_{1,j}
\qquad(j>i).
\]
These three identities give the displayed formula for \(\lambda_i\).
\end{proof}

\begin{lemma}\label{lem:triangular-elimination}
Let \(A=t(q)M^\vee u_2\). There is a unique \(u_1\in U_+^\vee\) such that \(u_1A\in B_-^\vee\). It satisfies
\[
\varepsilon_1(u_1)
=
\frac{q(a_1+c_1)}{a_1\cdots a_{n-1}b^2c_{n-1}\cdots c_1},
\qquad
\varepsilon_i(u_1)=0\quad(2\le i\le n).
\]
\end{lemma}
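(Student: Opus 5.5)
The plan is to compute \(A=t(q)M^\vee u_2\) row by row, write down an explicit candidate for \(u_1\) whose rows \(2,\dots,N'\) are as simple as possible, and then use the orthogonal symmetry of \(U_+^\vee\) to read off \(\varepsilon_1(u_1)\). Existence of \(u_1\) will come from the Richardson/Gauss-factorization criterion, and uniqueness from the uniqueness of Gauss factorization.

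\emph{Rows of \(A\).} From the shape of \(M^\vee\), row \(1\) of \(M^\vee u_2\) is row \(N'\) of \(u_2\), which is \(e_{N'}^t\). For \(2\le i\le N'-1\), row \(i\) is \(-(u_2)_{i,\bullet}\), and row \(N'\) is \((u_2)_{1,\bullet}\). Left multiplication by \(t(q)\) rescales the first row by \(q\) and the last row by \(q^{-1}\). Thus
\[
A_{1}=q\,e_{N'}^t,\qquad A_i=-(u_2)_{i,\bullet}\ (2\le i\le N'-1),\qquad A_{N'}=q^{-1}(u_2)_{1,\bullet}.
\]

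\emph{Rows \(2,\dots,N'\) of \(u_1\).} For \(2\le i\le N'-1\), I set \((u_1)_{i,N'}=q\lambda_i\) and all other off-diagonal entries of row \(i\) equal to zero, with \(\lambda_i\) as in Lemma~\ref{lem:u2-contribution}. Row \(i\) of \(u_1A\) is then \(-(u_2)_{i,\bullet}+\lambda_i(u_2)_{1,\bullet}\). By \eqref{eq:u2-entry-relation}, this vanishes in every column \(j>i\), so the row is lower triangular. Row \(N'\) of \(u_1A\) is trivially lower triangular.

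In particular \((u_1)_{i,i+1}=0\) for \(2\le i\le n\), since \(i+1\le n+1<N'\). The entries \((u_1)_{N'-i,N'+1-i}\) with \(2\le i\le n\) also vanish, because \(N'+1-i<N'\). Hence \(\varepsilon_i(u_1)=0\) for \(2\le i\le n\). The same holds for the short root, where \(\varepsilon_n(u_1)\) is read from \((u_1)_{n,n+1}=0\).

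\emph{Existence, uniqueness, and row \(1\).} The first row must still be determined. Existence of some \(u_1\in U_+^\vee\) with \(u_1A\in B_-^\vee\) follows because \(u_2\) lies in the Lusztig torus, so \(t(q)M^\vee u_2\) lies in \(U_+^\vee t(q)M^\vee U_+^\vee\) and meets \(B_-^\vee\) there (Rietsch's isomorphism \(Z_P^t\cong\mathcal R^\vee\)). Equivalently, the southeast minors of \(A\) are nonzero Laurent monomials in the torus coordinates, and the analogue of Lemma~\ref{lem:southeast-elimination} for the orthogonal form \(J^\vee\) applies. Uniqueness of the Gauss factorization then gives uniqueness of \(u_1\). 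The same argument, via the involution \(\iota(g)=(J^\vee)^{-1}(g^{-1})^tJ^\vee\), shows \(u_1\in SO_{2n+1}\). Consequently the rows \(2,\dots,N'\) constructed above are forced.

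\emph{Computing \(\varepsilon_1(u_1)\), the main obstacle.} Eliminating row \(1\) directly is messy: one would solve for \(x_{N'}=-q^2/D\) and then descend through a chain of columns. Instead I will use the orthogonality of \(u_1\). For \(u\in U_+^\vee\cap SO(J^\vee)\), the two entries in the simple-root position \(\alpha_1^\vee\) agree, \(u_{1,2}=u_{N'-1,N'}\); this is the displayed formula \(\varepsilon_1(u)=u_{1,2}=u_{N'-1,N'}\). Therefore
\[
\varepsilon_1(u_1)=(u_1)_{2n,N'}=q\lambda_{2n}.
\]
By Lemma~\ref{lem:u2-contribution} with \(i=2n\) and \(N'-i=1\), we have \(\lambda_{2n}=s_1/(A_{n-1}b^2C_1)=(a_1+c_1)/D\), which gives the stated formula. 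The only delicate point is confirming that the sign conventions in \(J^\vee\) and \(e_1^\vee\) produce \(+\) rather than \(-\) in this symmetry. I would settle this against the explicit \(n=2\) matrix for \(u_2\) displayed above. As a cross-check, I would also verify the first steps of the direct elimination of row \(1\) in that case.
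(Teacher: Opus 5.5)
Your proof is correct, and it takes a genuinely different and shorter route than the paper. The paper writes down the whole hook-shaped $u_1$, including the first row $(u_1)_{1,j}$ (ending with $(u_1)_{1,N'}=q^2/D$). It then checks $u_1^tJ^\vee u_1=J^\vee$ by hand, using the identities $r_j+(-1)^{j-1}h_{N'+1-j}=0$ and a telescoping sum, and kills the first row of $u_1A$ through the recursion for $R_j$. So existence is proved by exhibiting $u_1$, and $\varepsilon_1(u_1)$ is read off from $(u_1)_{1,2}=(u_1)_{N'-1,N'}$.

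You construct only the last-column entries $q\lambda_i$, which are the paper's $h_i$. You get existence, uniqueness and orthogonality abstractly, from the Gauss criterion and the involution $\iota$; this works because $\iota$ fixes $A\in SO(J^\vee)$ and preserves $U_+$ and $B_-$, since $J^\vee$ is antidiagonal. You then transfer $\varepsilon_1$ to the entry $(u_1)_{2n,N'}$, which you have already computed. This bypasses both the telescoping verification and the $R_j$ recursion. What you give up is the explicit first row of $u_1$, which the lemma does not need. The sign you flag is immediate: the $(2,N')$ entry of $u^tJ^\vee u=J^\vee$ reads $u_{1,2}-u_{N'-1,N'}=0$.

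The one step you should actually carry out is existence. The isomorphism $Z_P^t\cong\mathcal R_{w_P,w_0}^\vee$ does not by itself say that every $u_2$ in the Lusztig torus occurs as a right factor, and you only assert that the southeast minors are nonzero. This follows at once from your own construction. Let $u_1'$ have your rows $2,\dots,N'$ and first row $e_1^t$. Left multiplication by an upper unitriangular matrix does not change southeast minors, and rows $k,\dots,N'$ of $u_1'A$ are lower triangular. Hence for $k\ge2$ one has $\Delta_k(A)=\prod_{i=k}^{N'}(u_1'A)_{ii}$, where $(u_1'A)_{ii}=-1+\lambda_i(u_2)_{1,i}$. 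This diagonal entry equals
\begin{itemize}
\item $c_{i-1}/a_{i-1}$ for $2\le i\le n$;
\item $1$ for $i=n+1$;
\item $a_{N'-i}/c_{N'-i}$ for $n+2\le i\le 2n$;
\item $q^{-1}D$ for $i=N'$.
\end{itemize}
All of these are nonzero; in particular $\Delta_2(A)=q^{-1}D\neq0$, so the Gauss criterion applies.

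Your claim that rows $2,\dots,N'$ are ``forced'' is best justified by noting that row $i$ of $u_1A$ depends only on row $i$ of $u_1$. Splicing your rows into the unique $u_1$ therefore still gives an element of $B_-$, and uniqueness identifies the two matrices.
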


For \(n=2\), with the notation \(a=a_1\) and \(c=c_1\) used above, the corresponding upper unitriangular matrix \(u_1\) is
\[
u_1=
\begin{pmatrix}
1&\dfrac{q(a+c)}{ab^2c}&-\dfrac{\sqrt2q}{ab}&\dfrac{q}{a}&\dfrac{q^2}{ab^2c}\\[1.1em]
0&1&0&0&\dfrac{q}{a}\\[1.1em]
0&0&1&0&\dfrac{\sqrt2q}{ab}\\[1.1em]
0&0&0&1&\dfrac{q(a+c)}{ab^2c}\\[1.1em]
0&0&0&0&1
\end{pmatrix}.
\]

\begin{proof}
We give the calculation explicitly, using the notation \(A_r,C_r,D,s_r\) introduced before Lemma~\ref{lem:u2-contribution}.
The first row of \(u_2\) is
\[
(u_2)_{1,j}=
\begin{cases}
A_{j-2}s_{j-1}, & 2\le j\le n,\\[0.4em]
\sqrt2A_{n-1}b, & j=n+1,\\[0.4em]
A_{n-1}b^2C_{N'+1-j}, & n+2\le j\le N'.
\end{cases}
\]
Moreover the matrix \(A=t(q)M^\vee u_2\) has rows
\[
A_1=q e_{N'}^t,
\qquad
A_i=-(u_2)_i\quad(2\le i\le N'-1),
\qquad
A_{N'}=q^{-1}(u_2)_1,
\]
where \((u_2)_i\) denotes the \(i\)-th row of \(u_2\).

We claim that the required \(u_1\) is the following upper unitriangular matrix. Its nonzero entries off the diagonal are the entries in the last column
\[
(u_1)_{i,N'}=
\begin{cases}
\displaystyle \frac{q}{A_{i-1}}, & 2\le i\le n,\\[1.1em]
\displaystyle \frac{\sqrt2q}{A_{n-1}b}, & i=n+1,\\[1.1em]
\displaystyle \frac{q s_{N'-i}}{A_{n-1}b^2C_{N'-i}}, & n+2\le i\le 2n,
\end{cases}
\]
together with the entries in the first row
\[
(u_1)_{1,j}=
\begin{cases}
\displaystyle (-1)^j\frac{q s_{j-1}}{A_{n-1}b^2C_{j-1}}, & 2\le j\le n,\\[1.1em]
\displaystyle (-1)^{n-1}\frac{\sqrt2q}{A_{n-1}b}, & j=n+1,\\[1.1em]
\displaystyle (-1)^j\frac{q}{A_{N'-j}}, & n+2\le j\le 2n,\\[1.1em]
\displaystyle \frac{q^2}{D}, & j=N'.
\end{cases}
\]
All other off-diagonal entries of \(u_1\) are zero. We verify this claim in two steps.

First, \(u_1\in U_+^\vee\). The displayed matrix is upper unitriangular, so it remains to check that it preserves the form \(J^\vee\). Let
\[
r_j=(u_1)_{1,j}\quad(2\le j\le N'),
\qquad
h_i=(u_1)_{i,N'}\quad(2\le i\le N'-1).
\]
Here \(r\) records the first row and \(h\) records the last column. Write \(C_j\) for the columns of \(u_1\). Since \(u_1\) has hook shape, these columns are
\[
C_1=e_1,\qquad
C_j=e_j+r_je_1\quad(2\le j\le N'-1),
\]
and
\[
C_{N'}=e_{N'}+r_{N'}e_1+\sum_{i=2}^{N'-1}h_ie_i.
\]
Let \(\langle v,w\rangle=v^tJ^\vee w\). Since \(J^\vee\) pairs \(e_i\) only with \(e_{N'+1-i}\), all pairings between columns not involving \(C_{N'}\) are unchanged. Also \(\langle C_1,C_{N'}\rangle=1=\langle e_1,e_{N'}\rangle\). Since \(J^\vee\) is symmetric, the reversed pairings give the same conditions. Hence \(u_1^tJ^\vee u_1=J^\vee\) is equivalent to checking
\[
\langle C_j,C_{N'}\rangle=0\quad(2\le j\le N'-1),
\qquad
\langle C_{N'},C_{N'}\rangle=0.
\]
For \(2\le j\le N'-1\),
\[
\langle C_j,C_{N'}\rangle
=r_j\langle e_1,e_{N'}\rangle
+h_{N'+1-j}\langle e_j,e_{N'+1-j}\rangle
=r_j+(-1)^{j-1}h_{N'+1-j}.
\]
Moreover
\[
\langle C_{N'},C_{N'}\rangle
=2r_{N'}+\sum_{i=2}^{N'-1}(-1)^{i-1}h_ih_{N'+1-i}.
\]
The form condition is therefore equivalent to the two scalar identities
\[
r_j+(-1)^{j-1}h_{N'+1-j}=0
\qquad(2\le j\le N'-1),
\]
and
\[
2r_{N'}+\sum_{i=2}^{N'-1}(-1)^{i-1}h_ih_{N'+1-i}=0.
\]
The first identity follows by direct substitution from the displayed formulae for \(r_j\) and \(h_{N'+1-j}\). For the second, pair the terms \(i\) and \(N'+1-i\), keeping the middle term \(i=n+1\). This gives
\[
\sum_{i=2}^{N'-1}(-1)^{i-1}h_ih_{N'+1-i}
=
\frac{2q^2}{A_{n-1}b^2}
\left(
\sum_{\ell=1}^{n-1}(-1)^\ell\frac{s_\ell}{A_\ell C_\ell}
+\frac{(-1)^n}{A_{n-1}}
\right).
\]
The splitting
\[
\frac{s_\ell}{A_\ell C_\ell}
=
\frac{1}{A_{\ell-1}C_\ell}
+
\frac{1}{A_\ell C_{\ell+1}}
\]
follows from \(s_\ell=a_\ell+c_\ell\), \(A_\ell=A_{\ell-1}a_\ell\), and \(C_\ell=c_\ell C_{\ell+1}\). The alternating sum in parentheses is
\[
\begin{aligned}
&\sum_{\ell=1}^{n-1}(-1)^\ell\frac{s_\ell}{A_\ell C_\ell}
+\frac{(-1)^n}{A_{n-1}}\\
&=
\sum_{\ell=1}^{n-1}(-1)^\ell
\left(
\frac{1}{A_{\ell-1}C_\ell}
+
\frac{1}{A_\ell C_{\ell+1}}
\right)
+\frac{(-1)^n}{A_{n-1}}\\
&=
-\frac{1}{C_1}
+\sum_{\ell=1}^{n-2}
\left(
\frac{(-1)^\ell}{A_\ell C_{\ell+1}}
+
\frac{(-1)^{\ell+1}}{A_\ell C_{\ell+1}}
\right)
+\frac{(-1)^{n-1}+(-1)^n}{A_{n-1}}\\
&=-\frac{1}{C_1},
\end{aligned}
\]
where \(A_0=C_n=1\). Hence
\[
\sum_{i=2}^{N'-1}(-1)^{i-1}h_ih_{N'+1-i}
=
-\frac{2q^2}{A_{n-1}b^2C_1}
=-\frac{2q^2}{D}.
\]
Since \(r_{N'}=q^2/D\), we get
\[
2r_{N'}+\sum_{i=2}^{N'-1}(-1)^{i-1}h_ih_{N'+1-i}=0.
\]
Therefore \(u_1^tJ^\vee u_1=J^\vee\), so \(u_1\in U_+^\vee\).

Second, \(u_1A\in B_-^\vee\). The rows of \(A=t(q)M^\vee u_2\) are
\[
A_1=qe_{N'}^t,\qquad A_i=-(u_2)_i\ (2\le i\le N'-1),\qquad A_{N'}=q^{-1}(u_2)_1.
\]
By \eqref{eq:u2-entry-relation}, and by the displayed definition of the last-column entries \(h_i=(u_1)_{i,N'}=q\lambda_i\), we have
\[
(u_2)_{i,j}=h_iq^{-1}(u_2)_{1,j}
\qquad(2\le i\le N'-1,\ j>i).
\]
For \(2\le i\le N'-1\), the \(i\)-th row of \(u_1A\) is
\[
-(u_2)_i+h_iq^{-1}(u_2)_1.
\]
Equation~\eqref{eq:u2-entry-relation} therefore proves the required vanishing above the diagonal in rows \(2,\ldots,N'-1\).

It remains to check the first row. Since
\[
(u_1)_{1,1}=1,\qquad (u_1)_{1,i}=r_i\quad(2\le i\le N'),
\]
the first row of \(u_1A\) is the following linear combination of the rows of \(A\):
\[
(u_1A)_1
=
A_1+\sum_{i=2}^{N'-1}r_iA_i+r_{N'}A_{N'}
=
qe_{N'}^t-\sum_{i=2}^{N'-1}r_i(u_2)_i
+r_{N'}q^{-1}(u_2)_1.
\]
For \(2\le j<N'\), taking the \(j\)-th component gives
\[
(u_1A)_{1,j}
=
-\sum_{i=2}^{N'-1}r_i(u_2)_{i,j}
+r_{N'}q^{-1}(u_2)_{1,j}.
\]
Since \(u_2\) is upper unitriangular, the terms with \(i>j\) vanish and the term with \(i=j\) is \(-r_j\). For the remaining terms \(2\le i<j\), \eqref{eq:u2-entry-relation} gives
\[
(u_2)_{i,j}=h_iq^{-1}(u_2)_{1,j}.
\]
Hence, for \(2\le j<N'\),
\[
(u_1A)_{1,j}
=
-r_j+\frac{(u_2)_{1,j}}{q}
\left(
r_{N'}-\sum_{i=2}^{j-1}r_ih_i
\right),
\]
where the sum is empty when \(j=2\). For the last column, the row \(A_1=qe_{N'}^t\) contributes \(q\), and \((u_2)_{1,N'}=D\). Therefore
\[
(u_1A)_{1,N'}
=
q-\sum_{i=2}^{N'-1}r_i(u_2)_{i,N'}
+r_{N'}q^{-1}D
=
q+\frac{D}{q}
\left(
r_{N'}-\sum_{i=2}^{N'-1}r_ih_i
\right).
\]
Define
\[
R_j:=r_{N'}-\sum_{i=2}^{j-1}r_ih_i
\qquad(2\le j\le N'),
\]
where the sum is empty for \(j=2\). We compute these quantities from the recursion
\[
R_{j+1}=R_j-r_jh_j.
\]
First,
\[
R_2=r_{N'}=\frac{q^2}{D}
=\frac{q^2}{A_{n-1}b^2A_0C_1}.
\]
Assume \(2\le j\le n\) and that the displayed formula holds for \(R_j\). Using
\[
r_j=(-1)^j\frac{q s_{j-1}}{A_{n-1}b^2C_{j-1}},
\qquad
h_j=\frac{q}{A_{j-1}},
\]
we get
\[
\begin{aligned}
R_{j+1}
&=
(-1)^j\frac{q^2}{A_{n-1}b^2A_{j-2}C_{j-1}}
-
(-1)^j\frac{q^2s_{j-1}}{A_{n-1}b^2A_{j-1}C_{j-1}}\\
&=
(-1)^j\frac{q^2}{A_{n-1}b^2}
\left(
\frac{1}{A_{j-2}C_{j-1}}
-
\frac{s_{j-1}}{A_{j-1}C_{j-1}}
\right)\\
&=
(-1)^{j+1}
\frac{q^2}{A_{n-1}b^2A_{j-1}C_j}.
\end{aligned}
\]
Here the last equality uses \(s_{j-1}=a_{j-1}+c_{j-1}\), \(A_{j-1}=A_{j-2}a_{j-1}\), and \(C_{j-1}=c_{j-1}C_j\). Hence
\[
R_j
=
\frac{(-1)^jq^2}{A_{n-1}b^2A_{j-2}C_{j-1}}
\qquad(2\le j\le n+1),
\]
where \(A_0=C_n=1\). This proves the first range by induction. The middle step gives the initial value for the right-hand tail:
\[
R_{n+2}
=
R_{n+1}-r_{n+1}h_{n+1}
=
\frac{(-1)^{n+1}q^2}{A_{n-1}^2b^2}
-
(-1)^{n-1}\frac{2q^2}{A_{n-1}^2b^2}
=
\frac{(-1)^{n+2}q^2}{A_{n-1}^2b^2}.
\]
This is the formula at \(j=n+2\). Let \(n+2\le j\le 2n-1\), put \(m=N'-j\), and assume the right-hand tail formula holds for \(R_j\). The corresponding formulae for \(r_j\) and \(h_j\) are
\[
r_j=(-1)^j\frac{q}{A_m},
\qquad
h_j=\frac{q s_m}{A_{n-1}b^2C_m}.
\]
Then
\[
\begin{aligned}
R_{j+1}
&=
(-1)^j
\frac{q^2}{A_{n-1}b^2A_mC_{m+1}}
-
(-1)^j
\frac{q^2s_m}{A_{n-1}b^2A_mC_m}\\
&=
(-1)^j\frac{q^2}{A_{n-1}b^2}
\left(
\frac{1}{A_mC_{m+1}}
-
\frac{s_m}{A_mC_m}
\right)\\
&=
(-1)^{j+1}
\frac{q^2}{A_{n-1}b^2A_{m-1}C_m}.
\end{aligned}
\]
Since \(m-1=N'-(j+1)\) and \(C_m=C_{N'+1-(j+1)}\), this proves
\[
R_j
=
\frac{(-1)^jq^2}{A_{n-1}b^2A_{N'-j}C_{N'+1-j}}
\qquad(n+2\le j\le 2n),
\]
Finally,
\[
R_{N'}
=R_{2n}-r_{2n}h_{2n}
=
\frac{q^2}{A_{n-1}b^2A_1C_2}
-
\frac{q^2s_1}{A_{n-1}b^2A_1C_1}
=-\frac{q^2}{A_{n-1}b^2C_1}
=-\frac{q^2}{D}.
\]
Plugging these expressions for \(R_j\) into the displayed formulae for \((u_1A)_{1,j}\) gives, respectively,
\[
(u_1A)_{1,j}
=
-(-1)^j\frac{q s_{j-1}}{A_{n-1}b^2C_{j-1}}
+
\frac{A_{j-2}s_{j-1}}{q}
\frac{(-1)^jq^2}{A_{n-1}b^2A_{j-2}C_{j-1}}
=0
\quad(2\le j\le n),
\]
\[
(u_1A)_{1,n+1}
=
-(-1)^{n-1}\frac{\sqrt2q}{A_{n-1}b}
+
\frac{\sqrt2A_{n-1}b}{q}
\frac{(-1)^{n+1}q^2}{A_{n-1}^2b^2}
=0,
\]
\[
(u_1A)_{1,j}
=
-(-1)^j\frac{q}{A_{N'-j}}
+
\frac{A_{n-1}b^2C_{N'+1-j}}{q}
\frac{(-1)^jq^2}{A_{n-1}b^2A_{N'-j}C_{N'+1-j}}
=0
\quad(n+2\le j\le 2n),
\]
and
\[
(u_1A)_{1,N'}=q+\frac{D}{q}\left(-\frac{q^2}{D}\right)=0.
\]
Thus \((u_1A)_{1,j}=0\) for \(2\le j\le N'\). Consequently
\[
u_1A\in B_-^\vee.
\]
Since bottom-up Gaussian elimination with upper unitriangular left factor is unique whenever it exists, this constructed matrix is the required \(u_1\).

The simple-root coordinates of \(u_1\) are now read off from the adjacent entries:
\[
(u_1)_{1,2}=(u_1)_{N'-1,N'}=\frac{q s_1}{D}
=
\frac{q(a_1+c_1)}{a_1\cdots a_{n-1}b^2c_{n-1}\cdots c_1},
\]
whereas
\[
(u_1)_{i,i+1}=0\quad(2\le i\le n),
\]
including the short-root entries \((u_1)_{n,n+1}=(u_1)_{n+1,n+2}=0\). Hence the stated formulae for \(\varepsilon_i(u_1)\) follow.
\end{proof}

Combining Lemmas~\ref{lem:u2-contribution} and~\ref{lem:triangular-elimination} with
\[
W_{\mathrm{Lie}}(u_1t(q)M^\vee u_2)
=
\sum_{i=1}^n\varepsilon_i(u_1)
+
\sum_{i=1}^n\varepsilon_i(u_2)
\]
gives the superpotential expression directly.

\begin{thm}\label{thm:lusztig-potential}
On the dual-side Lusztig torus for the Rietsch mirror of \(Sp_{2n}/P_1\simeq\PP^{2n-1}\), the superpotential is
\[
W_{\mathrm{Lus}}^{(2n-1)}
=a_1+\cdots+a_{n-1}+b+c_{n-1}+\cdots+c_1
+\frac{q(a_1+c_1)}{a_1\cdots a_{n-1}b^2c_{n-1}\cdots c_1}.
\]
\end{thm}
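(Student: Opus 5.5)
The theorem follows by assembling the two preceding lemmas inside the reformulated potential \(W_{\mathrm{Lie}}=-\mathcal F_P=\sum_i\varepsilon_i(u_1)+\sum_i\varepsilon_i(u_2)\). First I will confirm that the Lusztig map is a chart on the fibre. For \((a_1,\dots,a_{n-1},b,c_{n-1},\dots,c_1)\in(\CC^*)^{2n-1}\), let \(u_2\) be the displayed product. By Lemma~\ref{lem:triangular-elimination} there is a unique \(u_1\in U_+^\vee\) with \(u_1t(q)M^\vee u_2\in B_-^\vee\), so the point \(u_1t(q)M^\vee u_2\) lies in \(Z_{P_1}^q\).

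The fibre is identified with \(\mathcal R^\vee_{w_P,w_0}\) via \(b\mapsto b\dot w_0B_-^\vee\). Uniqueness of \(u_1\) shows that the map from the torus into \(Z^q_{P_1}\) is injective. The Lusztig parametrization is an open embedding into \(U_+^\vee(w_Pw_0)\), whose dimension \(2n-1=\dim Z^q_{P_1}\) agrees with the fibre. Hence the image is a dense open torus chart, and the potential restricted to it is a well-defined Laurent polynomial.

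The potential itself comes from two pieces. Lemma~\ref{lem:u2-contribution} gives \(\sum_{i}\varepsilon_i(u_2)=\sum_{i=1}^{n-1}(a_i+c_i)+b\). Lemma~\ref{lem:triangular-elimination} gives \(\sum_i\varepsilon_i(u_1)=q(a_1+c_1)/(a_1\cdots a_{n-1}b^2c_{n-1}\cdots c_1)\). Adding these yields exactly \(W^{(2n-1)}_{\mathrm{Lus}}\).

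The one point requiring care is the sign convention. Passing from Rietsch's factorization \(b=u_1^Rt M^\vee(u_2^R)^{-1}\) to \(b=u_1tM^\vee u_2\) replaces \(u_2^R\) by its inverse. This flips the simple-root coordinates, \(\varepsilon_i(u^{-1})=-\varepsilon_i(u)\), since \(\varepsilon_i\) reads off first-superdiagonal entries and these negate under inversion in \(U_+^\vee\). Consequently the computed function is \(-\mathcal F_P\) rather than \(\mathcal F_P\).

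I will state explicitly that \(W_{\mathrm{Lus}}\) denotes this normalization, which is the standard positive form of the mirror potential. Equivalently, \(\mathcal F_P\) and \(W_{\mathrm{Lus}}\) differ by the global sign and so have the same Newton polytope and critical locus. I expect this bookkeeping of conventions to be the only real obstacle, since the computational content is already contained in the two lemmas.
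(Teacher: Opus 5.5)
Your derivation of the formula is the paper's proof. The paper also rewrites Rietsch's potential as \(W_{\mathrm{Lie}}=-\mathcal F_P=\sum_i\varepsilon_i(u_1)+\sum_i\varepsilon_i(u_2)\), using that inversion in \(U_+^\vee\) negates the simple-root coordinates, and then adds the outputs of Lemmas~\ref{lem:u2-contribution} and~\ref{lem:triangular-elimination}.

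Your chart paragraph goes beyond the paper, which takes the Lusztig torus as the domain and only evaluates the potential there. Its injectivity step does not follow as written. Uniqueness of \(u_1\) for a fixed \(u_2\) only shows that \(u_2\mapsto u_1t(q)M^\vee u_2\) is well defined. Injectivity needs \(u_2\) to be recoverable from the point, and for general \(u_2\in U_+^\vee\) it is not. Take any \(v\in U_+^\vee\cap(M^\vee)^{-1}U_+^\vee M^\vee\), the positive unipotent group of the Levi, generated by \(x_2,\dots,x_n\); for example \(v=x_2(s)\) when \(n=2\). Then
\[
u_1t(q)M^\vee u_2=\bigl(u_1\,t(q)M^\vee v^{-1}(M^\vee)^{-1}t(q)^{-1}\bigr)\,t(q)M^\vee\,(vu_2),
\]
and the bracketed factor again lies in \(U_+^\vee\).

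Injectivity is recovered from the constraint \(u_2\in U_+^\vee(w)\), where \(w=w_Pw_0\) is minimal in \(W_Pw_0\):
\begin{itemize}
\item Such \(v\) lie in \(P_1^\vee\), so \(vu_2\) and \(u_2\) have the same image under \(u\mapsto P_1^\vee u\).
\item The map \(u\mapsto B_-^\vee u\) is injective on \(U_+^\vee\).
\item Minimality of \(w\) gives \(P_1^\vee\cap\dot wB_-^\vee\dot w^{-1}\subset B_-^\vee\), so \(B_-^\vee\backslash B_-^\vee\dot wB_-^\vee\to P_1^\vee\backslash G^\vee\) is injective.
\end{itemize}
Together these show \(u\mapsto P_1^\vee u\) separates points of \(U_+^\vee(w)\). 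With this, plus Zariski's Main Theorem to pass from an injective morphism of smooth varieties of equal dimension to an open embedding, your chart claim holds. The formula itself does not depend on this point.
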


\section{Toric Degenerations}
This section compares the toric degeneration from the usual toric boundary with the degeneration selected by the type \(C_n\) flag valuation. In both cases, the degree-one value polytope is the polar dual of the Newton polytope of the corresponding Laurent superpotential.

We use the following convention, compatible with the usual polar-dual convention in toric mirror symmetry \cite{Bat94}. If \(P=\operatorname{Newt}(W)\) is the Newton polytope of a Laurent polynomial and the origin is in its interior, then
\[
P^\vee=\{u:\langle u,v\rangle\ge -1\ \text{for all }v\in P\}.
\]
With this convention, in the examples below the degree-one value set of the valuation is \(P^\vee\).

\subsection{Toric degenerations from valuations}
We recall the valuation mechanism behind the toric degenerations used below. Let \(X\) be a projective variety with an ample line bundle \(L\), and let
\[
R=\bigoplus_{k\ge0}H^0(X,L^k)
\]
be its section ring. A valuation on \(R\), ordered lexicographically on each graded piece, must be combined with the grading to produce the value semigroup. Thus, for \(0\ne s\in H^0(X,L^k)\), we use the degree-augmented value
\[
\widetilde\nu(s)=(k,\nu(s)).
\]
The corresponding value semigroup is
\[
S(R,\nu)=\{(k,\nu(s)):0\ne s\in H^0(X,L^k)\}
\]
in \(\ZZ_{\ge0}\times\ZZ^d\). If the valuation has one-dimensional leaves and this semigroup is finitely generated, then the associated graded algebra is
\[
\operatorname{gr}_\nu R\simeq \CC[S(R,\nu)].
\]
The Rees algebra of the valuation filtration then gives a flat degeneration
\[
X=\operatorname{Proj}R\rightsquigarrow \operatorname{Proj}\operatorname{gr}_\nu R.
\]
The special fibre is therefore the projective toric variety determined by the cone over the Newton--Okounkov body. This is the form of Anderson's Newton--Okounkov toric degeneration theorem used below \cite{Anderson}; see also the foundational constructions of Newton--Okounkov bodies in \cite{KK,LM} and the integrable-systems perspective in \cite{HK}.

A standard way to obtain such a valuation is to choose an admissible flag
\[
X=Y_0\supset Y_1\supset\cdots\supset Y_d=\{p\},
\]
with each \(Y_i\) irreducible and smooth at \(p\). The flag valuation records successive orders of vanishing along \(Y_1,Y_2,\ldots,Y_d\). We first illustrate this in the explicit \(\PP^3\) calculation, then record the standard toric valuation for \(\PP^N\), and finally use the type \(C_n\) flag whose value polytope is the polar dual of the Newton polytope of the Lie-theoretical Laurent polynomial.

\subsection{Toy example: \(\PP^3\)}
Let \(X=\PP^3\) with homogeneous coordinates \([p_0:p_1:p_2:p_3]\). The Lie-theoretical calculation in this example is based on Tillmann-Morris's unpublished mini-project report \cite{TillmannMorris}; Section~6 of the same manuscript also records the odd-dimensional formula used in Theorem~\ref{thm:lusztig-potential}. For the standard toric boundary \(p_0p_1p_2p_3=0\), work on the affine chart \(p_0\ne0\) and write
\[
x=\frac{p_1}{p_0},
\qquad
y=\frac{p_2}{p_0},
\qquad
z=\frac{p_3}{p_0}.
\]
The toric mirror potential (see Example~\ref{ex:tor-PN}) is
\[
W_{xyz}=x+y+z+\frac{1}{xyz}.
\]
Its Newton polytope is
\[
P_{xyz}
=
\operatorname{Conv}\{(1,0,0),(0,1,0),(0,0,1),(-1,-1,-1)\}.
\]
Its polar dual is
\[
P_{xyz}^\vee
=
\{(X,Y,Z)\in\RR^3:
X,Y,Z\ge -1,\ X+Y+Z\le1\}.
\]
This is the usual valuation polytope for the toric degeneration of \(\PP^3\).

On the other hand, the type \(C_2\) Lie-theoretical calculation of Theorem~\ref{thm:lusztig-potential} gives, after setting the quantum parameter \(q=1\) for the purpose of taking the Newton polytope, the Laurent polynomial
\[
W_{abc}
=
a+b+c+\frac{1}{ab^2}+\frac{1}{b^2c}.
\]
The support exponents are
\[
(1,0,0),\quad (0,1,0),\quad (0,0,1),
\quad (-1,-2,0),\quad (0,-2,-1).
\]
Hence, if \(P_{abc}=\operatorname{Newt}(W_{abc})\), then
\[
P_{abc}^\vee
=
\{(A,B,C)\in\RR^3:
A,B,C\ge -1,\ A+2B\le1,\ 2B+C\le1\}.
\]

\begin{rmk}
When \(n=2\), the Laurent polynomial obtained from the type \(C_2\) Lusztig torus is mutation-equivalent to the standard toric Hori--Vafa potential. In the sense of Akhtar--Coates--Galkin--Kasprzyk~\cite{ACGK}, the following birational map is the composition of three Laurent mutations:
\[
a=\frac{xy}{x+z},\qquad
b=x+z,\qquad
c=\frac{yz}{x+z}.
\]
It sends \(W_{xyz}=x+y+z+1/(xyz)\) to \(W_{abc}=a+b+c+1/(ab^2)+1/(b^2c)\). This comparison is included only as motivation and as a link with the MMLP picture for Fano threefolds \cite{CKPT}.
\end{rmk}

The two polytopes in this toy example are shown in Figure~\ref{fig:p3-polytopes}.

\begin{figure}[ht]
\centering
\begin{minipage}{0.47\textwidth}
\centering
\begin{tikzpicture}[scale=.5,x={(0.95cm,-0.28cm)},y={(0.9cm,0.22cm)},z={(0cm,0.9cm)},line join=round,line cap=round]
\coordinate (O) at (-1,-1,-1);
\coordinate (X) at (3,-1,-1);
\coordinate (Y) at (-1,3,-1);
\coordinate (Z) at (-1,-1,3);
\draw[dashed] (-1.35,0,0)--(0,0,0);
\draw[dashed] (0,-1.35,0)--(0,0,0);
\draw[dashed] (0,0,-1.35)--(0,0,0);
\draw[->] (0,0,0)--(3.45,0,0) node[below] {\scriptsize \(X\)};
\draw[->] (0,0,0)--(0,3.45,0) node[right] {\scriptsize \(Y\)};
\draw[->] (0,0,0)--(0,0,3.45) node[left] {\scriptsize \(Z\)};
\foreach \Xcoord in {-1,0,1,2,3}{
  \foreach \Ycoord in {-1,0,1,2,3}{
    \foreach \Zcoord in {-1,0,1,2,3}{
      \pgfmathtruncatemacro{\scoord}{\Xcoord+\Ycoord+\Zcoord}
      \ifnum\scoord<2
        \fill (\Xcoord,\Ycoord,\Zcoord) circle (1.35pt);
      \fi
    }
  }
}
\draw[thick,dashed] (O)--(Y);
\draw[thick] (O)--(X)--(Y)--(Z)--(O);
\draw[thick] (X)--(Z);
\node[below] at (1,-1,-1) {\small \(P_{xyz}^\vee\)};
\end{tikzpicture}
\end{minipage}
\hfill
\begin{minipage}{0.47\textwidth}
\centering
\begin{tikzpicture}[scale=.5,x={(0.95cm,-0.28cm)},y={(0.9cm,0.22cm)},z={(0cm,0.9cm)},line join=round,line cap=round]
\coordinate (O) at (-1,-1,-1);
\coordinate (X) at (3,-1,-1);
\coordinate (Z) at (-1,-1,3);
\coordinate (T) at (-1,1,-1);
\coordinate (U) at (3,-1,3);
\draw[dashed] (-1.35,0,0)--(0,0,0);
\draw[dashed] (0,-1.35,0)--(0,0,0);
\draw[dashed] (0,0,-1.35)--(0,0,0);
\draw[->] (0,0,0)--(3.45,0,0) node[below] {\scriptsize \(A\)};
\draw[->] (0,0,0)--(0,1.45,0) node[right] {\scriptsize \(B\)};
\draw[->] (0,0,0)--(0,0,3.45) node[left] {\scriptsize \(C\)};
\foreach \Acoord in {-1,0,1,2,3}{
  \foreach \Bcoord in {-1,0,1}{
    \foreach \Ccoord in {-1,0,1,2,3}{
      \pgfmathtruncatemacro{\sAcoord}{\Acoord+2*\Bcoord}
      \pgfmathtruncatemacro{\sCcoord}{\Ccoord+2*\Bcoord}
      \ifnum\sAcoord<2
        \ifnum\sCcoord<2
          \fill (\Acoord,\Bcoord,\Ccoord) circle (1.35pt);
        \fi
      \fi
    }
  }
}
\draw[thick,dashed] (T)--(Z);
\draw[thick,dashed] (T)--(U);
\draw[thick] (O)--(X)--(U)--(Z)--(O);
\draw[thick] (O)--(T)--(X);
\node[below] at (1,-1,-1) {\small \(P_{abc}^\vee\)};
\end{tikzpicture}
\end{minipage}
\caption{The polar duals of the Newton polytopes of \(W_{xyz}\) and \(W_{abc}\). The black dots are the lattice points in the degree-one normalized value sets after the indicated lattice coordinate identifications.}
\label{fig:p3-polytopes}
\end{figure}

Let
\[
\phi=p_0p_3-p_1p_2,
\qquad
Q=\{\phi=0\}\subset\PP^3.
\]
Then \(Q\simeq\PP^1\times\PP^1\). Use the Segre parametrization
\[
[p_0:p_1:p_2:p_3]
=
[s_0t_0:s_0t_1:s_1t_0:s_1t_1],
\]
and choose the flag
\[
\PP^3\supset Q\supset \mathcal C\supset p_\infty,
\]
where
\[
\mathcal C=\{s_0=0\}=\{p_0=p_1=0\}\subset Q,
\qquad
p_\infty=[0:0:0:1].
\]
Put
\[
s_\star=p_0p_3\phi=p_0p_3(p_0p_3-p_1p_2).
\]
For \(s\in H^0(\PP^3,\mathcal O(4k))\), write \(r=\operatorname{ord}_Q(s)\), remove the factor \(\phi^r\), and restrict the residual section to \(Q\). If \((i,j)\) is the standard toric flag value on \(Q\simeq\PP^1\times\PP^1\), the original flag valuation is
\[
\nu(s)=(r,i,j).
\]
Let
\[
\sigma:\ZZ^3\longrightarrow\ZZ^3,
\qquad
\sigma(r,i,j)=(i,r,j).
\]
For \(0\ne s\in H^0(\PP^3,\mathcal O(4k))\), define the normalized lattice-labelled value
\[
\lambda_{abc,k}(s)
:=
\sigma\bigl(\nu(s)-k\mathbf 1\bigr),
\qquad
\mathbf 1=(1,1,1).
\]
Thus
\[
\lambda_{abc,k}(s)
=
(i-k,\ r-k,\ j-k).
\]
The map \(\sigma\) is used only to identify the flag-value lattice with the exponent lattice of \(W_{abc}\). We do not equip the reordered coordinates with the standard lexicographic order and regard the resulting map as a new valuation.
The degree \(k\) normalized value set is
\[
\left\{
\lambda_{abc,k}(s):
0\ne s\in H^0(\PP^3,\mathcal O(4k))
\right\}
=
kP_{abc}^{\vee}\cap\ZZ^3.
\]
To see this directly, use a basis adapted to the filtration by powers of \(\phi\). For \(0\le r\le 2k\), put \(d=4k-2r\). Restriction identifies the \(r\)-th associated graded quotient with \(H^0(Q,\mathcal O_Q(d,d))\). On \(Q\simeq\PP^1\times\PP^1\), the monomials \(s_0^is_1^{d-i}t_0^jt_1^{d-j}\), \(0\le i,j\le d\), have toric flag values \((i,j)\). After choosing homogeneous lifts, the adapted basis elements have normalized values \((i-k,r-k,j-k)\), where \(0\le r\le2k\) and \(0\le i,j\le4k-2r\). Thus the degree \(k\) value set is cut out by \(A,B,C\ge-k\), \(A+2B\le k\), and \(2B+C\le k\). Therefore
\[
\left\{
\lambda_{abc,k}(s):
0\ne s\in H^0(\PP^3,\mathcal O(4k))
\right\}
=
kP_{abc}^\vee\cap\ZZ^3.
\]

\begin{exam}
Take \(s=\phi\,p_0^2\). Then \(k=1\), \(r=1\), and \(p_0^2|_Q=(s_0t_0)^2\), so the toric flag value on \(Q\simeq\PP^1\times\PP^1\) is \((i,j)=(2,2)\). Hence \(\lambda_{abc,1}(s)=(1,0,1)\), which lies on the two facets \(A+2B=1\) and \(2B+C=1\) of \(P_{abc}^\vee\). This agrees with the case \(n=2\) of the general construction below, since \(\phi=-F_1\).
\end{exam}

\subsection{The standard toric valuation}
We now record the analogous valuation for the classical toric mirror of \(\PP^N\). Let \(H_{\mathrm{tor}}=p_0p_1\cdots p_N\) be the toric anticanonical section. The anticanonical section ring is \(R_{\mathrm{tor}}=\bigoplus_{k\ge0}H^0(\PP^N,\mathcal O((N+1)k))\). On the affine chart \(p_0\ne0\), set \(x_i=p_i/p_0\) for \(1\le i\le N\).
After dividing a degree \((N+1)k\) section by \(H_{\mathrm{tor}}^k\), it becomes a Laurent polynomial in the \(x_i\)'s. We use the monomial valuation
\[
\nu_{\mathrm{tor}}\left(\sum_m c_mx^m\right)
=
\min_{\mathrm{lex}}\{m\in\ZZ^N:c_m\ne0\},
\]
so that \(\nu_{\mathrm{tor}}(x^m)=m\). This is the valuation obtained by recording the exponent vector in the standard torus coordinates.

Let
\[
P_{\mathrm{tor}}^{(N)}
=
\operatorname{Newt}\left(x_1+\cdots+x_N+\frac{q}{x_1\cdots x_N}\right)
=
\operatorname{Conv}\{e_1,\ldots,e_N,-\mathbf 1\},
\]
where \(\mathbf 1=(1,\ldots,1)\).

\begin{thm}\label{thm:standard-toric-valuation}
For every \(k\ge0\),
\[
\left\{
\nu_{\mathrm{tor}}\left(\frac{s}{H_{\mathrm{tor}}^k}\right):
0\ne s\in H^0(\PP^N,\mathcal O((N+1)k))
\right\}
=
k\bigl(P_{\mathrm{tor}}^{(N)}\bigr)^\vee\cap\ZZ^N.
\]
If \(\widetilde\nu_{\mathrm{tor}}(s)=(k,\nu_{\mathrm{tor}}(s/H_{\mathrm{tor}}^k))\), then the corresponding Rees algebra gives the standard toric degeneration associated with the polar dual of the Newton polytope of the classical toric mirror.
\end{thm}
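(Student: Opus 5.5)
We would prove the statement by a direct monomial computation, using nothing beyond the definitions. A basis of \(H^0(\PP^N,\mathcal O((N+1)k))\) is given by the monomials \(p^\alpha=p_0^{\alpha_0}\cdots p_N^{\alpha_N}\) with \(\alpha\in\ZZ_{\ge0}^{N+1}\) and \(|\alpha|=(N+1)k\). Dividing by \(H_{\mathrm{tor}}^k=(p_0\cdots p_N)^k\) and using \(x_i=p_i/p_0\), we get
\[
\frac{p^\alpha}{H_{\mathrm{tor}}^k}=p_0^{|\alpha|-(N+1)k}\prod_{i=1}^N x_i^{\alpha_i-k}=x^{m(\alpha)},
\qquad m(\alpha)_i=\alpha_i-k .
\]
So each basis monomial becomes a single Laurent monomial. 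The map \(\alpha\mapsto m(\alpha)\) is injective on the degree-\((N+1)k\) slice, because \(\alpha_0=(N+1)k-\sum_{i\ge1}\alpha_i\) is recovered from \(m(\alpha)\).

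The first step is to identify the image of \(m\). The conditions \(\alpha_i\ge0\) for \(i\ge1\) translate to \(m_i\ge-k\). The condition \(\alpha_0\ge0\) reads
\[
(N+1)k-\textstyle\sum_{i\ge1}(m_i+k)\ge0,
\]
which is \(\sum m_i\le k\). Comparing with the description of \(\bigl(P_{\mathrm{tor}}^{(N)}\bigr)^\vee\) in Section~2.1, the image is exactly \(k\bigl(P_{\mathrm{tor}}^{(N)}\bigr)^\vee\cap\ZZ^N\). Conversely, every lattice point of this set comes from an admissible \(\alpha\).

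The second step passes from basis monomials to arbitrary sections. A general nonzero \(s\) is a linear combination of such monomials, so \(s/H_{\mathrm{tor}}^k\) is a Laurent polynomial whose support lies in the above set. Its value \(\nu_{\mathrm{tor}}\) is the lex-minimal exponent in that support, hence again lies in the set. Every point of the set is attained by the corresponding monomial, which gives the claimed equality for each \(k\).

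For the degeneration statement, we would check the hypotheses of the valuation framework recalled in Section~4.1. First, \(\nu_{\mathrm{tor}}\) is a full-rank valuation with one-dimensional leaves. Distinct monomials in a fixed degree have distinct values, so each graded leaf is spanned by a single monomial. Second, the degree-augmented semigroup is
\[
\{(k,m):m\in k\bigl(P_{\mathrm{tor}}^{(N)}\bigr)^\vee\cap\ZZ^N\},
\]
the lattice points of the cone over a lattice polytope. This semigroup is finitely generated by Gordan's lemma. It is in fact generated in degree one, since the polytope is a unimodular simplex, with vertices \((-1,\dots,-1,N+1-(N-1)\cdot 0)\)-type points obtained as a translate of \((N+1)\) times the standard simplex.

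Anderson's theorem \cite{Anderson} then gives the Rees-algebra flat degeneration to \(\operatorname{Proj}\CC[S]\), which is the toric variety of \(\bigl(P_{\mathrm{tor}}^{(N)}\bigr)^\vee\), namely \(\PP^N\) itself. Because the valuation is monomial in the toric coordinates, \(\operatorname{gr}R\cong R\) and the degeneration is the trivial one noted in the introduction.

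We expect no genuine obstacle; the only care needed is bookkeeping. Two points need attention: the sign and normalization conventions (the \(-k\) shift, and the polar-dual convention \(\langle u,v\rangle\ge-1\)), and the fact that the lexicographic minimum of a sum of distinct monomials is attained by one of them.
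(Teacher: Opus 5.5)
Your proposal is correct and follows essentially the same route as the paper: monomial basis \(p^\alpha\mapsto x^{m(\alpha)}\) with \(m_i=\alpha_i-k\), the constraints \(m_i\ge-k\) and \(\sum m_i\le k\), attainment of the lex-minimum by one monomial, and then Gordan's lemma together with one-dimensional leaves, feeding into Anderson's theorem. One slip is inessential: \(\bigl(P_{\mathrm{tor}}^{(N)}\bigr)^\vee=-\mathbf 1+(N+1)\Delta_N\), where \(\Delta_N\) is the standard simplex, has vertices \(-\mathbf 1\) and \(-\mathbf 1+(N+1)e_i\), so it is a dilate of a unimodular simplex rather than itself unimodular; degree-one generation still holds because such dilates are normal, and in any case only Gordan's lemma is needed.
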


\begin{proof}
Use the standard monomial basis of \(H^0(\PP^N,\mathcal O((N+1)k))\):
\[
p^\alpha=p_0^{\alpha_0}p_1^{\alpha_1}\cdots p_N^{\alpha_N},
\qquad
|\alpha|=(N+1)k.
\]
On the chart \(p_0\ne0\), we have
\[
\frac{p^\alpha}{H_{\mathrm{tor}}^k}
=
x_1^{\alpha_1-k}\cdots x_N^{\alpha_N-k}.
\]
Thus its value is
\[
m=(\alpha_1-k,\ldots,\alpha_N-k).
\]
The conditions \(\alpha_i\ge0\) for \(1\le i\le N\) give \(m_i\ge-k\). The remaining condition
\(\alpha_0=(N+1)k-\sum_{i=1}^N\alpha_i\ge0\) gives \(\sum_i m_i\le k\). Conversely, any integral \(m\) satisfying these inequalities is obtained by setting
\[
\alpha_i=m_i+k\quad(1\le i\le N),
\qquad
\alpha_0=k-\sum_{i=1}^Nm_i.
\]
Since the lexicographic monomial valuation of a nonzero linear combination is the value of one of its monomial terms, the degree \(k\) value set is exactly
\[
\left\{
\nu_{\mathrm{tor}}\left(\frac{s}{H_{\mathrm{tor}}^k}\right):
0\ne s\in H^0(\PP^N,\mathcal O((N+1)k))
\right\}
=
\{m\in\ZZ^N:m_i\ge-k,\ \sum_i m_i\le k\}.
\]

On the other hand, \(P_{\mathrm{tor}}^{(N)}=\operatorname{Conv}\{e_1,\ldots,e_N,-\mathbf 1\}\). By the polar convention fixed at the beginning of this section, \(\bigl(P_{\mathrm{tor}}^{(N)}\bigr)^\vee\) is cut out by
\[
u_i\ge -1\qquad(1\le i\le N),
\qquad
\sum_{i=1}^Nu_i\le1.
\]
Thus
\[
k\bigl(P_{\mathrm{tor}}^{(N)}\bigr)^\vee\cap\ZZ^N
=
\{m\in\ZZ^N:m_i\ge-k,\ \sum_i m_i\le k\},
\]
which is exactly the value set computed above.

After adjoining the degree, the value semigroup is
\[
S_{\mathrm{tor}}
=
\{(k,m):k\ge0,\ m\in k(P_{\mathrm{tor}}^{(N)})^\vee\cap\ZZ^N\}.
\]
It is finitely generated by Gordan's lemma, and the monomial valuation has one-dimensional leaves. Anderson's Newton--Okounkov theorem therefore gives a flat Rees degeneration with special fibre \(\operatorname{Proj}\CC[S_{\mathrm{tor}}]\), the projective toric variety associated with \((P_{\mathrm{tor}}^{(N)})^\vee\).
\end{proof}

\subsection{The flag valuation and the Newton--Okounkov body}
Let \(X=\PP^{2n-1}\) with homogeneous coordinates \([p_0:\cdots:p_N]\), where \(N=2n-1\). In the previous section the type \(C_n\) boundary \(D_C\) was defined by \(H_N=p_0p_N\prod_{r=1}^{n-1}F_r\), with \(F_r=\sum_{j=0}^r(-1)^jp_{r-j}p_{N-r+j}\).
The following flag and valuation generalize the type \(C\) calculation. They give the valuation-side construction of the toric degeneration adapted to the type \(C_n\) anticanonical section \(H_N\).
Set \(F_0=p_0p_N\). Then, for \(1\le r\le n-1\), \(F_r=p_rp_{N-r}-F_{r-1}\).
Let \(Q_r=\{F_r=0\}\). The flag used below starts with \(Y_0=\PP^N\) and \(Y_1=Q_{n-1}\). Then, for \(m=n-1,n-2,\ldots,2\), impose successively the two hyperplane conditions \(p_m=0\) and \(p_{N-m}=0\).
After these steps the remaining surface is
\[
Q_1=\{p_1p_{N-1}-p_0p_N=0\}
\subset \PP\langle p_0,p_1,p_{N-1},p_N\rangle\cong \PP^3.
\]
This is a smooth quadric surface. Identify it with \(\PP^1\times\PP^1\) by
\[
p_0=X_0Y_0,
\qquad
p_1=X_1Y_0,
\qquad
p_{N-1}=X_0Y_1,
\qquad
p_N=X_1Y_1.
\]
On \(Q_1\), finish the flag by taking
\[
C=\{Y_0=0\}=\{p_0=p_1=0\}
\]
and
\[
p_\infty=([0:1],[0:1])=[0:\cdots:0:1].
\]
Let \(Y_\bullet\) denote the resulting full flag. Near \(p_\infty\), with local coordinates
\[
x=\frac{X_0}{X_1}=\frac{p_{N-1}}{p_N},
\qquad
 y=\frac{Y_0}{Y_1}=\frac{p_1}{p_N},
\]
the final flag is \(Q_1\supset\{y=0\}\supset\{x=y=0\}\).

\begin{lemma}\label{lem:flag-admissible}
The flag \(Y_\bullet\) is admissible: every member is irreducible and smooth at \(p_\infty\), and \(Y_i\) has codimension \(i\) in \(\PP^N\).
\end{lemma}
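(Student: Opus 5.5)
I will list the members of \(Y_\bullet\) explicitly and check each one by hand. Put \(Y_1=Q_{n-1}\). For \(m=n-1,\dots,2\), let \(Y_{2(n-m)}\) be obtained from the previous member by imposing \(p_m=0\), and \(Y_{2(n-m)+1}\) by additionally imposing \(p_{N-m}=0\). Then \(Y_{2n-3}=Q_1\), \(Y_{2n-2}=C\) and \(Y_{2n-1}=\{p_\infty\}\), so there are \(N=2n-1\) proper members in total.

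The key observation concerns the intersection \(Q_{n-1}\cap\{p_{n-1}=\cdots=p_m=0\}\) and its analogues. On the linear subspace \(L=\{p_{r}=p_{N-r}=0 \text{ for } m\le r\le n-1\}\), the recursion \(F_r=p_rp_{N-r}-F_{r-1}\) gives \(F_{n-1}|_L=\pm F_{m-1}\). Hence the members of odd index are \(Y_{2(n-m)+1}=\{F_{m-1}=0\}\cap L\), a quadric in the projective space \(L\cong\PP^{2m-1}\) with coordinates \(p_0,\dots,p_{m-1},p_{N-m+1},\dots,p_N\). The members of even index are \(Y_{2(n-m)}=\{p_m=0\}\cap\{F_m=0\}\) inside the previous linear space \(\PP^{2m+1}\). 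There \(F_m=p_mp_{N-m}-F_{m-1}\) restricts to \(-F_{m-1}\), which does not involve \(p_{N-m}\). So this member is a cone with vertex \([e_{N-m}]\) over the quadric \(\{F_{m-1}=0\}\subset\PP^{2m-1}\).

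Irreducibility follows from Remark~\ref{rmk:square-free-boundary}: \(F_r\) has rank \(2(r+1)\), so \(F_r\) is a nondegenerate quadratic form on its \(2r+2\) variables. For \(r\ge1\) the rank is at least \(4\), so \(\{F_r=0\}\) is an irreducible smooth quadric. A cone over an irreducible variety is irreducible. The last members are \(Q_1\cong\PP^1\times\PP^1\), the line \(C\) and the point \(p_\infty\), all of which are clearly irreducible. Each member is cut out of the previous one by a single equation that does not vanish identically on it, so the codimensions increase by exactly one, giving \(\operatorname{codim}Y_i=i\).

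For smoothness at \(p_\infty=[e_N]\), I will compute in the affine chart \(p_N=1\). Here \(F_r=\pm p_0+(\text{terms not involving } p_0)\), so \(\partial F_r/\partial p_0\) at \(p_\infty\) equals \(\pm p_N=\pm1\neq0\). Thus every quadric \(\{F_r=0\}\) is smooth at \(p_\infty\) inside its linear span. The cones are also smooth at \(p_\infty\), because \(p_\infty\) is not the vertex \([e_{N-m}]\) and lies on the smooth base. For \(Q_1\), \(C\) and \(p_\infty\), the local coordinates \(x,y\) already displayed in the statement settle the question. The step I expect to need the most care is the sign and index bookkeeping in \(F_{n-1}|_L=\pm F_{m-1}\), together with confirming that the rank claim applies in the correct ambient span at each stage.
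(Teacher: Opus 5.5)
Your proposal is correct and follows essentially the paper's own argument: the recursion \(F_m=p_mp_{N-m}-F_{m-1}\) presents the odd-index members as the nondegenerate quadrics \(\{F_{m-1}=0\}\) in coordinate subspaces, and the even-index members as cones over these quadrics with vertex \([e_{N-m}]\). As in the paper, irreducibility comes from the rank \(2(r+1)\ge4\) of \(F_r\), and smoothness at \(p_\infty\) comes from the linear term \((-1)^rp_0\) of \(F_r\) in the chart \(p_N=1\). One cosmetic point: \(C\) is cut out of \(Q_1\) by \(p_1=0\) only near \(p_\infty\), since globally \(\{p_1=0\}\cap Q_1=C\cup\{X_1=0\}\). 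Your explicit dimension count for \(Q_1\supset C\supset\{p_\infty\}\) is unaffected.
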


\begin{proof}
For \(1\le m\le n-1\), set
\[
P_m:=\PP\langle p_0,\ldots,p_m,p_{N-m},\ldots,p_N\rangle
\]
and
\[
Z_m:=\{F_m=0\}\subset P_m.
\]
The quadratic form \(F_m\) is nondegenerate on the coordinates of \(P_m\), so \(Z_m\) is an irreducible quadric for every \(m\ge1\).
The identity
\[
F_m=p_mp_{N-m}-F_{m-1}
\]
is the key point.  Inside \(Z_m\), the intersection with \(p_m=0\) is cut out in \(P_m\) by
\[
p_m=0,\qquad F_{m-1}=0,
\]
while \(p_{N-m}\) remains a free homogeneous coordinate. Hence \(Z_m\cap\{p_m=0\}\) is the projective cone over \(Z_{m-1}\) with vertex in the \(p_{N-m}\)-direction, and is therefore irreducible. Intersecting further with \(p_{N-m}=0\) gives \(Z_{m-1}\subset P_{m-1}\). Together with the irreducibility of the quadrics \(Z_m\), this proves irreducibility of every stratum in the flag and also gives the expected codimensions.

For smoothness at \(p_\infty=[0:\cdots:0:1]\), work in the affine chart \(p_N=1\), with local coordinates \(p_0,\ldots,p_{N-1}\). With the present indexing,
\[
F_m=(-1)^mp_0+\text{terms of order at least }2
\qquad\text{at }p_\infty.
\]
Thus \(F_m\) may replace \(p_0\) as one element of a regular system of parameters in the local ring at \(p_\infty\). At the stage indexed by \(m\), the local equations are \(F_m\) together with the coordinate functions already imposed, namely \(p_{m+1},p_{N-m-1},\ldots,p_{n-1},p_n\), and then successively \(p_m\) and \(p_{N-m}\). These equations have independent linear parts after replacing \(p_0\) by \(F_m\). At the final surface \(Q_1\), the last two equations are \(p_1=0\) and \(p_{N-1}=0\), equivalently \(y=0\) and \(x=0\) in the coordinates on \(Q_1\simeq\PP^1\times\PP^1\). Hence the equations defining every stratum form part of a regular system of parameters at \(p_\infty\), so every stratum is smooth there.
\end{proof}

Let
\[
R=\bigoplus_{k\ge0}H^0(\PP^N,\mathcal O_{\PP^N}((N+1)k)).
\]
Set \(d=2n-1\), and let \(\mathbf 1=(1,\ldots,1)\in\ZZ^d\).
For \(s\in H^0(\PP^N,\mathcal O((N+1)k))\), let
\[
\nu_{\mathrm{flag}}(s)=(r,u_{n-1},v_{n-1},\ldots,u_2,v_2,u_1,v_1)
\]
be the flag valuation just described. Thus \(r\) is the order along \(Q_{n-1}\); for \(2\le m\le n-1\), the pair \((u_m,v_m)\) records the successive orders along \(p_m=0\) and \(p_{N-m}=0\); and \((u_1,v_1)\) is the toric flag valuation on \(Q_1\simeq\PP^1\times\PP^1\), with \(u_1=\operatorname{ord}_C\) and \(v_1=\operatorname{ord}_{p_\infty}\) after restricting to \(C\).

For
\[
H_N=p_0p_N\prod_{r=1}^{n-1}F_r,
\]
one has
\[
\nu_{\mathrm{flag}}(H_N)=\mathbf 1.
\]
Indeed, \(F_{n-1}\) gives the first order, and the identity \(F_r=p_rp_{N-r}-F_{r-1}\) shows inductively that, after restricting to \(Q_r\), the previous factor \(F_{r-1}\) restricts to \(p_rp_{N-r}\). At the last step, \(p_0p_N\) restricts on \(Q_1\) to the local monomial \(xy\), which has final valuation \((1,1)\).

For \(0\ne s\in H^0(\PP^N,\mathcal O((N+1)k))\), define the raw normalized value vector
\[
\lambda_k(s):=\nu_{\mathrm{flag}}(s)-k\mathbf 1.
\]
Write the shifted coordinates as
\[
B=r-k,
\qquad
A_m=u_m-k,
\qquad
C_m=v_m-k
\quad(1\le m\le n-1).
\]
Thus
\[
\lambda_k(s)
=
(B,A_{n-1},C_{n-1},\ldots,A_1,C_1).
\]

\begin{rmk}\label{rmk:lattice-identification}
The Lusztig monomial coordinates use the opposite order for the \(A_m,C_m\)-pairs. To compare value vectors with those exponent coordinates, let \(\sigma:\ZZ^d\longrightarrow\ZZ^d\) be the lattice permutation \(\sigma(B,A_{n-1},C_{n-1},\ldots,A_1,C_1)=(B,A_1,C_1,\ldots,A_{n-1},C_{n-1})\).
The map \(\sigma\) is only a lattice coordinate identification. We do not regard \(\sigma\circ\lambda_k\), with the standard lexicographic order on the target coordinates after applying \(\sigma\), as a new valuation.

On the degree-augmented lattice define
\[
\Sigma:\ZZ\times\ZZ^d\longrightarrow\ZZ\times\ZZ^d,
\qquad
\Sigma(k,v)=\bigl(k,\sigma(v-k\mathbf 1)\bigr).
\]
For \((k,v),(\ell,w)\in\ZZ\times\ZZ^d\),
\[
\begin{aligned}
\Sigma\bigl((k,v)+(\ell,w)\bigr)
&=
\Sigma(k+\ell,v+w)\\
&=
\bigl(k+\ell,\sigma(v+w-(k+\ell)\mathbf 1)\bigr)\\
&=
\bigl(k+\ell,\sigma(v-k\mathbf 1)+\sigma(w-\ell\mathbf 1)\bigr)\\
&=
\Sigma(k,v)+\Sigma(\ell,w).
\end{aligned}
\]
Thus \(\Sigma\) is a group homomorphism. Its inverse is
\[
\Sigma^{-1}(k,m)=\bigl(k,\sigma^{-1}(m)+k\mathbf 1\bigr),
\]
so \(\Sigma\) is a unimodular lattice automorphism. The toric degeneration below is defined by the original flag valuation; \(\sigma\) and \(\Sigma\) are used only to identify lattice coordinates and value semigroups.
\end{rmk}

With the polar convention fixed above, in the coordinates
\[
(B,A_1,C_1,\ldots,A_{n-1},C_{n-1}),
\]
the polar polytope attached to the type \(C_n\) Lie-theoretical support is
\[
P_n^\vee=\left\{(B,A_1,C_1,\ldots,A_{n-1},C_{n-1})\in\RR^{2n-1}:
\begin{array}{l}
B\ge -1,\ A_m\ge -1,\ C_m\ge -1,\\
A_1+2B+\sum_{m=2}^{n-1}(A_m+C_m)\le1,\\
C_1+2B+\sum_{m=2}^{n-1}(A_m+C_m)\le1
\end{array}
\right\}.
\]
Equivalently, \(P_n^\vee\) is defined by
\[
B\ge -1,
\qquad
A_m\ge -1,
\qquad
C_m\ge -1
\quad(1\le m\le n-1),
\]
and
\[
A_1+2B+\sum_{m=2}^{n-1}(A_m+C_m)\le1,
\]
\[
C_1+2B+\sum_{m=2}^{n-1}(A_m+C_m)\le1.
\]

\begin{prop}\label{prop:no-body-p2n}
For every \(k\ge0\),
\[
\sigma\left(
\left\{
\nu_{\mathrm{flag}}(s)-k\mathbf 1:
0\ne s\in H^0(\PP^N,\mathcal O((N+1)k))
\right\}
\right)
=
kP_n^\vee\cap\ZZ^d.
\]
\end{prop}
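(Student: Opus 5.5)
The plan is to compute the value set by peeling off one flag step at a time. At each step I identify the associated graded piece of the filtration with the full space of sections of the next flag member, in a lower degree. This is the same mechanism as in the \(\PP^3\) toy example of Section~4.2, iterated. Throughout, \(N+1=2n\), so the sections have degree \(2nk\), and at every stage the varieties are projectively normal with surjective restriction maps.

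\emph{Step 1 (order along \(Q_{n-1}\)).} For \(s\in H^0(\PP^N,\mathcal O(2nk))\), write \(s=F_{n-1}^r s'\) with \(F_{n-1}\nmid s'\), where \(0\le r\le nk\). The associated graded quotient of the \(F_{n-1}\)-adic filtration in degree \(2nk\) is
\[
H^0(\PP^N,\mathcal O(2nk-2r))/F_{n-1}H^0(\PP^N,\mathcal O(2nk-2r-2)).
\]
Since \(F_{n-1}\) is irreducible (Remark~\ref{rmk:square-free-boundary}), this quotient is the degree-\((2nk-2r)\) part of the homogeneous coordinate ring of the hypersurface \(Z_{n-1}=Q_{n-1}\). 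For a hypersurface, restriction is surjective, so this part equals \(H^0(Z_{n-1},\mathcal O(2nk-2r))\).

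\emph{Step 2 (the pairs \(p_m,p_{N-m}\), \(m=n-1,\dots,2\)).} Work inductively in the coordinate rings. Let \(S_m=\CC[P_m]/(F_m)\) and let \(D\) be the current degree. As in the proof of Lemma~\ref{lem:flag-admissible}, \(S_m/(p_m)\) is the coordinate ring of the cone over \(Z_{m-1}\) with vertex in the \(p_{N-m}\)-direction. That cone is \(\CC[P_{m-1}][p_{N-m}]/(F_{m-1})\), which is a domain, so \(p_m\) is prime in \(S_m\). Hence the \(p_m\)-adic graded piece of order \(u\) is the degree-\((D-u)\) part of the cone's coordinate ring. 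In that ring \(p_{N-m}\) is a polynomial variable, so the graded piece of order \(v\) along \(p_{N-m}=0\) is \((S_{m-1})_{D-u-v}\). The orders satisfy \(u,v\ge0\) and \(u+v\le D\), and every such value is attained, for example by \(p_m^u p_{N-m}^v\) times a section not vanishing on the next stratum.

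\emph{Step 3 (the surface \(Q_1\)).} We arrive at \(Q_1\cong\PP^1\times\PP^1\) in degree
\[
D'=2nk-2r-\sum_{m=2}^{n-1}(u_m+v_m).
\]
The toric flag values of \(H^0(\mathcal O(D',D'))\) are exactly the pairs \((u_1,v_1)\) with \(0\le u_1,v_1\le D'\), as computed in Section~4.2.

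\emph{Step 4 (assembly).} Collect the choices made along the way: a homogeneous lift for each adapted basis element (monomial on \(Q_1\), times \(p_m^{u_m}p_{N-m}^{v_m}\) factors, times \(F_{n-1}^r\)). This gives a basis of \(H^0(\PP^N,\mathcal O(2nk))\) with distinct valuations. Since \(\nu_{\mathrm{flag}}\) has one-dimensional leaves, the value set equals the set of these basis values.

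It then remains to translate the constraints. They are \(r,u_m,v_m\ge0\), that is, \(B,A_m,C_m\ge -k\), together with \(u_1\le D'\) and \(v_1\le D'\). Substituting \(r=B+k\), \(u_m=A_m+k\), \(v_m=C_m+k\) turns \(u_1\le D'\) into
\[
A_1+2B+\sum_{m=2}^{n-1}(A_m+C_m)\le 2nk-2k-2(n-2)k-k=k,
\]
and similarly for \(C_1\). The remaining conditions \(u+v\le D\) and \(r\le nk\) are implied by these inequalities, so they impose nothing new. After applying \(\sigma\), this is exactly \(kP_n^\vee\cap\ZZ^{2n-1}\). As a consistency check, \(s=H_N^k\) gives the origin, matching \(\nu_{\mathrm{flag}}(H_N)=\mathbf 1\).

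The main obstacle is the exactness of the successive graded identifications, in particular Step 2. There I need \(p_m\) prime in \(S_m\) and the vanishing order along the prime divisor \(\{p_m=0\}\) to agree with \(p_m\)-adic divisibility in the graded ring. Both rely on the rank of \(F_{m-1}\) making the cone's coordinate ring a domain. The delicate case is \(m=2\), where \(Z_1\) has rank \(4\) and is still irreducible, so the argument should go through. I would also check that the vanishing order is computed at the generic point, which is legitimate because each stratum is irreducible by Lemma~\ref{lem:flag-admissible}.
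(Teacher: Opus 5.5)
Your proposal is correct and is essentially the paper's argument. It uses the same residual-degree bookkeeping down to \(Q_1\simeq\PP^1\times\PP^1\), the same toric bounds \(0\le u_1,v_1\le\delta\), and the same witness sections \(F_{n-1}^r\prod_{m=2}^{n-1}p_m^{u_m}p_{N-m}^{v_m}\,\widetilde\sigma\); the only difference is packaging, since you compute each associated graded piece exactly via \(S_m/(p_m)\cong S_{m-1}[p_{N-m}]\) (the paper's cone description in ring form) so both inclusions fall out of one recursion, whereas the paper proves containment by degree counting and the reverse inclusion by checking exact orders of the explicit sections.
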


\begin{proof}
Let \(s\in H^0(\PP^N,\mathcal O((N+1)k))\), and write
\[
\nu_{\mathrm{flag}}(s)=(r,u_{n-1},v_{n-1},\ldots,u_2,v_2,u_1,v_1).
\]
Write \(B=r-k\), \(A_m=u_m-k\), and \(C_m=v_m-k\).
All entries of \(\nu_{\mathrm{flag}}(s)\) are nonnegative. After removing \(r\) copies of \(F_{n-1}\), the residual degree is \((N+1)k-2r\). After imposing the successive orders \(u_m,v_m\) for \(2\le m\le n-1\), the residual degree on the final quadric \(Q_1\) is
\[
\delta=(N+1)k-2r-
\sum_{m=2}^{n-1}(u_m+v_m).
\]
The restriction of \(\mathcal O_{\PP^N}(\delta)\) to \(Q_1\simeq\PP^1\times\PP^1\) is \(\mathcal O_{Q_1}(\delta,\delta)\). For the standard toric flag on \(\PP^1\times\PP^1\), the value \((u_1,v_1)\) of a section of \(\mathcal O(\delta,\delta)\) satisfies
\[
0\le u_1\le \delta,
\qquad
0\le v_1\le \delta.
\]
Substituting
\[
r=B+k,
\qquad
u_m=A_m+k,
\qquad
v_m=C_m+k
\]
turns these two upper bounds into
\[
A_1+k
\le
(N+1)k-2(B+k)
-
\sum_{m=2}^{n-1}(A_m+C_m+2k).
\]
Since \(N+1=2n\), the constant terms on the right simplify as
\[
(N+1)k-2k-2(n-2)k=2k.
\]
Thus the displayed inequality is equivalent to
\[
A_1+2B+
\sum_{m=2}^{n-1}(A_m+C_m)\le k,
\]
and the same calculation with \(C_1+k\le \delta\) gives
\[
C_1+2B+
\sum_{m=2}^{n-1}(A_m+C_m)\le k.
\]
The nonnegativity of the unshifted valuation entries gives
\[
B\ge -k,
\qquad
A_m\ge -k,
\qquad
C_m\ge -k.
\]
Thus every value vector after applying \(\sigma\) lies in \(kP_n^\vee\cap\ZZ^d\).

Conversely, take any integral point
\[
(B,A_1,C_1,\ldots,A_{n-1},C_{n-1})\in kP_n^\vee\cap\ZZ^d.
\]
Set
\[
r=B+k,
\qquad
u_m=A_m+k,
\qquad
v_m=C_m+k
\quad(1\le m\le n-1).
\]
The lower-bound inequalities imply that all these integers are nonnegative. Define
\[
\delta=(N+1)k-2r-
\sum_{m=2}^{n-1}(u_m+v_m).
\]
The two upper-bound inequalities are precisely
\[
0\le u_1\le \delta,
\qquad
0\le v_1\le \delta.
\]
On \(Q_1\simeq\PP^1\times\PP^1\), choose the section
\[
\sigma_{u_1,v_1}
=
X_0^{v_1}X_1^{\delta-v_1}Y_0^{u_1}Y_1^{\delta-u_1}
\in H^0(Q_1,\mathcal O_{Q_1}(\delta,\delta)).
\]
In the local coordinates \(x=X_0/X_1\), \(y=Y_0/Y_1\) at \(p_\infty\), this section is \(x^{v_1}y^{u_1}\) times a unit, so its toric flag valuation is \((u_1,v_1)\).

Let
\[
L=\PP\langle p_0,p_1,p_{N-1},p_N\rangle\cong\PP^3.
\]
The exact sequence
\[
0\to\mathcal O_L(\delta-2)\to\mathcal O_L(\delta)\to\mathcal O_{Q_1}(\delta,\delta)\to0
\]
and the vanishing \(H^1(L,\mathcal O_L(\delta-2))=0\) for \(\delta\ge0\) show that the restriction map
\[
H^0(L,\mathcal O_L(\delta))\longrightarrow H^0(Q_1,\mathcal O_{Q_1}(\delta,\delta))
\]
is surjective. Choose a homogeneous lift \(\widetilde\sigma\in H^0(L,\mathcal O_L(\delta))\) of \(\sigma_{u_1,v_1}\), and regard it as a homogeneous polynomial in \(p_0,p_1,p_{N-1},p_N\), hence as a section on \(\PP^N\).

Now define
\[
s=
F_{n-1}^r
\left(\prod_{m=2}^{n-1}p_m^{u_m}p_{N-m}^{v_m}\right)
\widetilde\sigma.
\]
Its degree is
\[
2r+
\sum_{m=2}^{n-1}(u_m+v_m)+\delta
=
(N+1)k.
\]
The successive orders are exact. First, \(F_{n-1}\) does not divide the residual factor after removing \(F_{n-1}^r\). Indeed, \(F_{n-1}\) is an irreducible quadratic and is not a coordinate hyperplane factor, so it cannot divide the coordinate monomial part. For \(n>2\), it also cannot divide \(\widetilde\sigma\), since \(\widetilde\sigma\) only uses the final variables \(p_0,p_1,p_{N-1},p_N\), whereas \(F_{n-1}\) contains the term \(p_{n-1}p_n\). For \(n=2\), divisibility by \(F_1\) would force \(\widetilde\sigma|_{Q_1}=0\), contradicting the choice of the nonzero section \(\sigma_{u_1,v_1}\). Hence the order along \(Q_{n-1}\) is \(r\).

For the intermediate orders, argue recursively. Suppose the steps with indices larger than \(m\) have already been passed. The current stratum is \(Z_m=\{F_m=0\}\subset P_m\), and the remaining section has the form
\[
p_m^{u_m}p_{N-m}^{v_m}
\left(\prod_{\ell=2}^{m-1}p_\ell^{u_\ell}p_{N-\ell}^{v_\ell}\right)
\widetilde\sigma
\]
up to a unit along the generic point of the current stratum. On \(Z_m\), the next member of the flag is the divisor \(p_m=0\). Hence \(p_m\) is the local equation whose order is measured at this step. The other displayed factors are not identically zero along the generic point of \(Z_m\cap\{p_m=0\}\): in particular, \(p_{N-m}\) remains free there, and the lower-index coordinates have not yet been imposed. Thus the order along \(p_m=0\) is \(u_m\).

After dividing by \(p_m^{u_m}\) and restricting to \(Z_m\cap\{p_m=0\}\), the identity
\[
F_m=p_mp_{N-m}-F_{m-1}
\]
shows that this intermediate stratum is the projective cone over \(Z_{m-1}\), with \(p_{N-m}\) a Cartier coordinate on the cone direction. The next member of the flag is obtained by imposing \(p_{N-m}=0\). Hence \(p_{N-m}^{v_m}\) contributes order \(v_m\), and after dividing by this power and restricting to \(p_{N-m}=0\), the current stratum becomes \(Z_{m-1}\). Repeating this argument for \(m=n-1,n-2,\ldots,2\) gives all intermediate orders. The lift \(\widetilde\sigma\) uses only the final variables \(p_0,p_1,p_{N-1},p_N\), so it introduces no additional vanishing along these intermediate branches. Finally, \(\sigma_{u_1,v_1}\) was chosen to have toric flag value \((u_1,v_1)\) on \(Q_1\simeq\PP^1\times\PP^1\).

It follows that the successive restrictions along the flag have orders
\[
(r,u_{n-1},v_{n-1},\ldots,u_2,v_2,u_1,v_1).
\]
Thus
\[
\sigma\bigl(\nu_{\mathrm{flag}}(s)-k\mathbf 1\bigr)
=
(B,A_1,C_1,\ldots,A_{n-1},C_{n-1}),
\]
which is the prescribed lattice point after the lattice identification \(\sigma\). Hence every lattice point of \(kP_n^\vee\) occurs.
\end{proof}

\begin{thm}\label{thm:no-toric-degeneration}
Let
\[
S_{\mathrm{flag}}
=
\left\{
\bigl(k,\nu_{\mathrm{flag}}(s)\bigr):
k\ge0,\ 0\ne s\in R_k
\right\},
\qquad
R_k=H^0(\PP^N,\mathcal O((N+1)k)).
\]
Let
\[
S_n=\{(k,m):k\ge0,\ m\in kP_n^\vee\cap\ZZ^d\}.
\]
Then
\[
\Sigma(S_{\mathrm{flag}})=S_n.
\]
Moreover,\footnote{Here and below, \(\operatorname{gr}_{\nu_{\mathrm{flag}}}R\) denotes the associated graded algebra for the filtration on the graded ring \(R\) induced by the degree-augmented value \(0\ne s\in R_k\mapsto(k,\nu_{\mathrm{flag}}(s))\). We retain the shorter notation because the degree is already recorded by the grading of \(R\).}
\[
\operatorname{gr}_{\nu_{\mathrm{flag}}}R
\simeq \CC[S_{\mathrm{flag}}]\simeq \CC[S_n].
\]
The Rees algebra of the filtration defined by the original flag valuation gives a flat degeneration
\[
\PP^N\rightsquigarrow X_{P_n^\vee}:=\operatorname{Proj}\CC[S_n].
\]
The special fibre is the projective toric variety associated with the semigroup cone over \(P_n^\vee\).
\end{thm}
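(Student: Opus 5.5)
The plan is to deduce everything from Proposition~\ref{prop:no-body-p2n} together with the general valuation mechanism recalled in Section~4.1.

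First, the identity \(\Sigma(S_{\mathrm{flag}})=S_n\) is a degree-by-degree statement. By definition, \(\Sigma(k,v)=(k,\sigma(v-k\mathbf 1))\) preserves the degree coordinate. Hence the degree-\(k\) slice of \(\Sigma(S_{\mathrm{flag}})\) is \(\sigma(\{\nu_{\mathrm{flag}}(s)-k\mathbf 1: 0\ne s\in R_k\})\). Proposition~\ref{prop:no-body-p2n} identifies this set with \(kP_n^\vee\cap\ZZ^d\), which is exactly the degree-\(k\) slice of \(S_n\). Since \(\Sigma\) is a unimodular lattice automorphism by Remark~\ref{rmk:lattice-identification}, it induces an isomorphism of semigroups \(S_{\mathrm{flag}}\cong S_n\), and hence an isomorphism of semigroup algebras \(\CC[S_{\mathrm{flag}}]\simeq\CC[S_n]\). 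Note that \(S_n\) is the set of lattice points of the rational polyhedral cone over \(\{1\}\times P_n^\vee\). It is therefore finitely generated by Gordan's lemma, and so is \(S_{\mathrm{flag}}\).

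Second, I identify \(\operatorname{gr}_{\nu_{\mathrm{flag}}}R\simeq\CC[S_{\mathrm{flag}}]\). A flag valuation built from an admissible flag (Lemma~\ref{lem:flag-admissible}) is a \(\ZZ^d\)-valued valuation with one-dimensional leaves. For each degree \(k\) and value \(v\), the quotient of \(\{s\in R_k:\nu(s)\ge v\}\) by \(\{\nu(s)>v\}\) has dimension at most one, and it is nonzero exactly when \((k,v)\in S_{\mathrm{flag}}\). Multiplicativity of the valuation makes the associated graded ring a graded domain whose homogeneous pieces are indexed by \(S_{\mathrm{flag}}\), each of dimension one. Multiplication of leading terms is then governed by addition in \(S_{\mathrm{flag}}\).

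To obtain an honest isomorphism with the semigroup algebra, I choose, for each generator of \(S_{\mathrm{flag}}\), a section realizing it; these exist by the explicit sections \(s=F_{n-1}^r\prod p_m^{u_m}p_{N-m}^{v_m}\widetilde\sigma\) from the proof of the proposition. One then checks that the resulting products of leading terms are nonzero with no cocycle obstruction. The cleanest route is simply to cite Anderson's theorem \cite{Anderson}, which gives both the isomorphism \(\operatorname{gr}_\nu R\simeq\CC[S(R,\nu)]\) and the flat degeneration, once the value semigroup is finitely generated and the valuation has one-dimensional leaves.

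Third, Anderson's construction gives the flatness. The Rees algebra \(\bigoplus_{t}\mathcal F_{\ge t}t^{-1}\), built from a \(\ZZ\)-filtration refining the lexicographic order, is flat over \(\CC[t]\) because it is torsion-free. The general fibre is \(\operatorname{Proj}R=\PP^N\) and the special fibre is \(\operatorname{Proj}\operatorname{gr}R\simeq\operatorname{Proj}\CC[S_n]=X_{P_n^\vee}\), the projective toric variety of the cone over \(P_n^\vee\).

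The main obstacle is that Anderson's theorem requires a \(\ZZ\)-filtration rather than a lexicographic \(\ZZ^d\)-order. I would handle it as Anderson does: choose a linear functional \(\ell\) on \(\ZZ\times\ZZ^d\) that agrees with the lexicographic order on the finitely many generators and relations of \(S_{\mathrm{flag}}\). I would also confirm that \(\Sigma\) is used only to relabel the semigroup, so that the filtration itself is defined purely by the original \(\nu_{\mathrm{flag}}\).
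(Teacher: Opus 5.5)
Your proposal is correct and follows essentially the same route as the paper. It reads off \(\Sigma(S_{\mathrm{flag}})=S_n\) degree by degree from Proposition~\ref{prop:no-body-p2n}, transports finite generation (Gordan's lemma) through the unimodular \(\Sigma\), and takes one-dimensional leaves from the admissible flag of Lemma~\ref{lem:flag-admissible}. It then applies Anderson's theorem to the original flag-valuation filtration to get \(\operatorname{gr}_{\nu_{\mathrm{flag}}}R\simeq\CC[S_{\mathrm{flag}}]\simeq\CC[S_n]\) and the flat Rees degeneration. Your additional discussion of the cocycle issue and of replacing the lexicographic order by a \(\ZZ\)-filtration only unpacks what the paper absorbs into its citation of Anderson.
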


\begin{proof}
The flag is admissible by Lemma~\ref{lem:flag-admissible}, so the associated flag valuation has one-dimensional leaves. The degree-augmented valuation \(s\mapsto(k,\nu_{\mathrm{flag}}(s))\) also has one-dimensional leaves: the degree separates the homogeneous pieces, and within each fixed degree the valuation is the original flag valuation.

For \(0\ne s\in R_k\),
\[
\Sigma\bigl(k,\nu_{\mathrm{flag}}(s)\bigr)
=
\bigl(k,\sigma(\nu_{\mathrm{flag}}(s)-k\mathbf 1)\bigr).
\]
Proposition~\ref{prop:no-body-p2n} identifies the second component, for fixed \(k\), with \(kP_n^\vee\cap\ZZ^d\). Hence \(\Sigma(S_{\mathrm{flag}})=S_n\).

Since \(\Sigma\) is a semigroup isomorphism, it induces an isomorphism of semigroup algebras
\[
\CC[S_{\mathrm{flag}}]\longrightarrow \CC[S_n],
\qquad
\chi^{(k,v)}\longmapsto \chi^{\Sigma(k,v)}.
\]
The semigroup \(S_n\) is the semigroup of lattice points in the rational polyhedral cone over \(P_n^\vee\), so it is finitely generated by Gordan's lemma. Therefore \(S_{\mathrm{flag}}\) is finitely generated as well. Anderson's Newton--Okounkov degeneration theorem applied to the original flag-valuation filtration gives a flat Rees degeneration with general fibre \(R\) and special fibre
\[
\operatorname{gr}_{\nu_{\mathrm{flag}}}R\simeq\CC[S_{\mathrm{flag}}]\simeq\CC[S_n].
\]
Taking \(\operatorname{Proj}\) gives the stated toric degeneration.
\end{proof}

\begin{cor}
Let
\[
W_n:=\left.W_{\mathrm{Lus}}^{(2n-1)}\right|_{q=1}
=
b+
\sum_{m=1}^{n-1}(a_m+c_m)
+
\frac{1}{a_2\cdots a_{n-1}b^2c_{n-1}\cdots c_1}
+
\frac{1}{a_1\cdots a_{n-1}b^2c_{n-1}\cdots c_2},
\]
where an empty product is understood to be \(1\). Then
\[
\operatorname{Newt}(W_n)^\vee=P_n^\vee.
\]
The toric special fibre in Theorem~\ref{thm:no-toric-degeneration} is the projective toric variety whose normal fan is the face fan of \(\operatorname{Newt}(W_n)\).
\end{cor}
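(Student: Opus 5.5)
The plan is to compute \(\operatorname{Newt}(W_n)\) from the exponent vectors of \(W_n\) and then apply the polar convention directly. First I would check that setting \(q=1\) in Theorem~\ref{thm:lusztig-potential} does not change the support. Expanding \(q(a_1+c_1)/(a_1\cdots a_{n-1}b^2c_{n-1}\cdots c_1)\) gives exactly the two displayed monomials, so \(W_n\) has \(2n+1\) monomials. I would write their exponents in the lattice coordinates \((B,A_1,C_1,\ldots,A_{n-1},C_{n-1})\) of Remark~\ref{rmk:lattice-identification}, which pair \(b\) with \(B\), \(a_m\) with \(A_m\) and \(c_m\) with \(C_m\). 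The linear terms give the standard basis vectors \(e_B,e_{A_m},e_{C_m}\). The two Laurent terms give
\[
v_1=-2e_B-e_{C_1}-\sum_{m=2}^{n-1}(e_{A_m}+e_{C_m}),\qquad
v_2=-2e_B-e_{A_1}-\sum_{m=2}^{n-1}(e_{A_m}+e_{C_m}).
\]

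Next I would verify that the origin lies in the interior of \(\operatorname{Newt}(W_n)\), so that the polar convention of Section~4 applies and yields a bounded polytope. The vectors \(e_B,e_{A_m},e_{C_m}\) already span \(\RR^{2n-1}\). The relation
\[
v_1+2e_B+e_{C_1}+\sum_{m\ge2}(e_{A_m}+e_{C_m})=0,
\]
together with the analogous relation for \(v_2\), can be averaged. This expresses \(0\) as a strictly positive combination of all the vertices, because every basis vector appears with positive coefficient (\(e_{A_1}\) through the \(v_2\)-relation, \(e_{C_1}\) through the \(v_1\)-relation). A strictly positive combination of a spanning set of vertices places the origin in the interior.

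For a convex hull, \(u\) lies in the polar dual exactly when \(\langle u,v\rangle\ge-1\) for each generator \(v\). The basis vectors give \(B,A_m,C_m\ge-1\). The vector \(v_1\) gives \(C_1+2B+\sum_{m\ge2}(A_m+C_m)\le1\), and \(v_2\) gives the corresponding inequality with \(A_1\) in place of \(C_1\). These are precisely the defining inequalities of \(P_n^\vee\), so \(\operatorname{Newt}(W_n)^\vee=P_n^\vee\).

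For the fan statement I would invoke the standard polar duality for a lattice polytope \(P\) containing the origin in its interior. Under this duality the cone over a face \(F\) of \(P\) equals the normal cone of \(P^\vee\) along the dual face \(F^*\), so the normal fan of \(P^\vee\) is the face fan of \(P\). Combining this with Theorem~\ref{thm:no-toric-degeneration}, the special fibre \(\operatorname{Proj}\CC[S_n]\) is the toric variety of \(P_n^\vee\). Its fan is therefore the normal fan of \(P_n^\vee\), which is the face fan of \(\operatorname{Newt}(W_n)\).

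The only point requiring care is matching the coordinate order and signs of the exponent vectors with the lattice labels fixed by \(\sigma\). After that the argument is bookkeeping.
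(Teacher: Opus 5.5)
Your proposal is correct and follows essentially the same route as the paper: expand the \(q\)-term into two monomials, read off one polar inequality per exponent vector to recover exactly the defining inequalities of \(P_n^\vee\), and then deduce the fan statement from Theorem~\ref{thm:no-toric-degeneration}. You also check that the origin lies in the interior of \(\operatorname{Newt}(W_n)\) and spell out the normal-fan/face-fan duality, both of which the paper leaves implicit.
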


\begin{proof}
Expanding the final term of \(W_{\mathrm{Lus}}^{(2n-1)}\) at \(q=1\) gives
\[
\frac{a_1+c_1}
{a_1\cdots a_{n-1}b^2c_{n-1}\cdots c_1}
=
\frac{1}
{a_2\cdots a_{n-1}b^2c_{n-1}\cdots c_1}
+
\frac{1}
{a_1\cdots a_{n-1}b^2c_{n-1}\cdots c_2}.
\]
The exponent vectors of these two monomials, together with those of \(b\), \(a_m\), and \(c_m\), give precisely the inequalities defining \(P_n^\vee\). Hence \(\operatorname{Newt}(W_n)^\vee=P_n^\vee\). The final assertion follows from Theorem~\ref{thm:no-toric-degeneration}.
\end{proof}

\subsection{The boundary degeneration}
There is also a rank-one valuation which degenerates the type \(C_n\) boundary to the standard toric boundary inside the fixed ambient projective space. Choose the weight vector \(\omega=(0^2,1^2,\ldots,N^2)\). For a monomial \(p^\alpha\), set \(\nu_\omega(p^\alpha)=\omega\cdot\alpha\), and for a polynomial take the minimum of the weights of its monomials.

For the \(r\)-th quadratic factor, the term \(p_rp_{N-r}\) has weight \(r^2+(N-r)^2\). The \emph{excess weight} of another term in the same factor means its \(\omega\)-weight minus this candidate minimum weight. For \(1\leq j\leq r\), the term \(p_{r-j}p_{N-r+j}\) has excess weight
\[
(r-j)^2+(N-r+j)^2-r^2-(N-r)^2=2j(N-2r+j)>0.
\]
Every other term has strictly larger weight. Hence \(p_rp_{N-r}\) is the unique initial monomial of \(F_r\), and
\[
\operatorname{in}_\omega(H_N)
=p_0p_N\prod_{r=1}^{n-1}p_rp_{N-r}
=\prod_{i=0}^Np_i.
\]
Equivalently, the Rees family is
\[
H_{N,t}=p_0p_N\prod_{r=1}^{n-1}
\left(p_rp_{N-r}+\sum_{j=1}^r(-1)^jt^{2j(N-2r+j)}p_{r-j}p_{N-r+j}\right).
\]
Then \(H_{N,1}=H_N\), while \(H_{N,0}=\prod_{i=0}^Np_i\). This family is flat. Indeed, \(H_{N,t}\) is homogeneous in the \(p_i\)'s and is not divisible by \(t\), so multiplication by \(t\) is injective on
\[
\CC[p_0,\ldots,p_N,t]/(H_{N,t}).
\]
More generally, \(H_{N,t}\) has a coefficient equal to \(1\) as a polynomial in the \(p_i\)'s, so no nonzero polynomial in \(t\) divides it; hence the quotient has no \(\CC[t]\)-torsion. Since \(\CC[t]\) is a PID, this quotient is \(\CC[t]\)-flat. Equivalently, the hypersurface in \(\PP^N\times\mathbb A^1\) is a relative Cartier divisor with no vertical component. Thus
\[
(\PP^{2n-1},D_C)
\rightsquigarrow
(\PP^{2n-1},D_{\mathrm{tor}})
\]
as pairs, where the ambient variety is fixed but the anticanonical boundary degenerates from the type \(C_n\) boundary to the toric boundary.
This rank-one degeneration is distinct from the Newton--Okounkov toric degeneration of the ambient projective variety. It compares the two boundary divisors inside the fixed ambient projective space.

\end{document}